\newtheorem{thm}{Theorem}[section]
\newtheorem {asp}{Assumption}[section]
\newtheorem{rmk}{Remark}[section]
\newtheorem{pron}{Proposition}[section]
\newtheorem{deff}{Definition}[section]
\newtheorem{cor}{Corollary}[section]
\newtheorem{lem}{Lemma}[section]
\newtheorem{prop}{Proposition}[section]
\theoremstyle{definition}
\theoremstyle{remark}
\newtheorem{example}{Example}[section]
\numberwithin{equation}{section}
\newcommand{\eps}{\varepsilon}
\newcommand{\C}{\mathcal{C}}
\newcommand{\F}{\mathcal{F}}
\newcommand{\E}{\mathbb{E}}
\newcommand{\BE}{\mathbf{E}}
\newcommand{\BB}{\mathbf{B}}
\newcommand{\BD}{\mathbf{D}}
\newcommand{\BX}{\mathbf{X}}
\newcommand{\bx}{\mathbf{x}}
\newcommand{\BY}{\mathbf{Y}}
\newcommand{\by}{\mathbf{y}}
\newcommand{\ba}{\mathbf{a}}
\newcommand{\bb}{\mathbf{b}}
\newcommand{\N}{\mathbb{N}}
\newcommand{\PP}{\mathbb{P}}
\newcommand{\R}{\mathbb{R}}
\numberwithin{equation}{section}
\newcommand{\bed}{\begin{displaymath}}
\newcommand{\eed}{\end{displaymath}}
\newcommand{\bea}{\bed\begin{array}{rl}}
\newcommand{\eea}{\end{array}\eed}
\newcommand{\barray}{\begin{array}{ll}}
\newcommand{\earray}{\end{array}}
\newcommand{\diag}{{\rm diag}}
\def\disp{\displaystyle}
\newcommand{\1}{\boldsymbol{1}}
\newcommand{\Cov}{\mathrm{Cov}}
\def\bar{\overline}
\def\hat{\widehat}
\def\a.s{\text{\;a.s.\;}}
\title[Stochastic population growth]{Stochastic population growth in spatially heterogeneous environments: The density-dependent case}
\author[A. Hening]{Alexandru Hening }
\thanks{A. Hening was in part supported by EPSRC grant EP/K034316/1}
\address{Department of Mathematics\\
Tufts University\\
Bromfield-Pearson Hall\\
503 Boston Avenue\\
Medford, MA 02155\\
United States
}
\email{Alexandru.Hening@tufts.edu}
\address{Department of Mathematics \\
 Imperial College London\\
 South Kensington Campus\\
  London, SW7 2AZ\\
 United Kingdom}
 \email{a.hening@imperial.ac.uk}
\author[D. Nguyen]{Dang H. Nguyen }
\thanks{The research of D. Nguyen and G. Yin was supported in part by
 the National Science Foundation.}
\address{Department of Mathematics \\
 Wayne State University\\
 Detroit, MI 48202 \\
 United States}
 \email{dangnh.maths@gmail.com}
 \author[G. Yin]{George Yin}
\address{Department of Mathematics \\
 Wayne State University\\
 Detroit, MI 48202 \\
 United States}
 \email{gyin@math.wayne.edu}
\keywords{Stochastic population growth; density-dependence; ergodicity; spatial and temporal heterogeneity; Lotka-Volterra model; Lyapunov exponent; habitat fragmentation; stochastic environment; dispersion}
\subjclass[2010]{92D25, 37H15, 60H10}
\begin{document}

\begin{abstract}
 This work is devoted to
studying the dynamics of a structured population that is subject to
the combined effects of environmental stochasticity,
competition for resources, spatio-temporal heterogeneity and dispersal.
The population is
spread throughout $n$ patches whose population abundances
are modeled as the solutions of a system of nonlinear stochastic differential equations living on $[0,\infty)^n$.

We prove that $ r$, the stochastic growth rate of the total population in the absence of competition,  determines the long-term behaviour of the population. The parameter $ r$ can be expressed as the Lyapunov exponent of an associated linearized system of stochastic differential equations.

Detailed analysis shows that
 if $ r>0$,
 the population abundances
 converge polynomially fast
 to a unique invariant probability measure on $(0,\infty)^n$,
 while when $ r<0$,
  the population abundances of the patches
  converge almost surely to $0$ exponentially fast.
  This generalizes and extends the results of Evans et al  (2014 J. Math. Biol.) and proves one of their conjectures.

Compared to recent developments, our model incorporates very general density-dependent growth rates and competition terms. Furthermore, we prove that persistence is robust to small, possibly density dependent, perturbations of the growth rates, dispersal matrix and covariance matrix of the environmental noise. We also show that the stochastic growth rate depends continuously on the coefficients.
Our work allows the environmental noise driving our system to be degenerate. This is relevant from a biological point of view since, for example, the environments of the different patches can be
 perfectly correlated. We show how one can adapt the nondegenerate results to the degenerate setting. As an example we fully analyze the two-patch case, $n=2$, and show that the stochastic growth rate is a decreasing function of the dispersion rate. In particular, coupling two sink patches can never yield persistence, in contrast to the results from the non-degenerate setting treated by Evans et al. which show that sometimes coupling by dispersal can make the system persistent.

\end{abstract}
\maketitle
\tableofcontents

\section{Introduction}\label{sec:int}

 The survival of an organism is influenced by both biotic (competition for resources, predator-prey interactions) and abiotic (light, precipitation, availability of resources) factors. Since these factors are space-time dependent, all types of organisms have to choose their dispersal strategies: If they disperse they can arrive in locations with different environmental conditions while if they do not disperse they face the temporal fluctuations of the local environmental conditions. The dispersion strategy impacts key attributes of a population including its spatial distribution and temporal fluctuations in its abundance. Individuals selecting more favorable habitats are more likely to survive or reproduce. When population densities increase in these habitats, organisms may prosper by selecting habitats that were previously unused.
There have been numerous studies of the interplay between dispersal and environmental heterogeneity and how this influences population growth; see \cite{H83, GH02, S04, RRB05, S10, CCL12, DR12, ERSS13} and references therein.
The mathematical analysis for stochastic models with density-dependent feedbacks is less explored. In the setting of discrete-space discrete-time models there have been thorough studies by \cite{BS09, S10, SBA11}. Continuous-space discrete-time population models that disperse and experience uncorrelated, environmental stochasticity have been studied by \cite{HTW88, HTW88b, HTW90}. They show that the leading Lyapunov exponent $ r$ of the linearization of the system around the extinction state almost determines the persistence and extinction of the population. For continuous-space continuous-time population models \cite{MS04} study the dynamics of random Kolmogorov type PDE models in bounded domains. Once again, it is shown that the leading Lyapunov exponent $ r$ of the linarization around the trivial equilibrium $0$ almost determines when the population goes extinct and when it persists. In the current paper we explore the question of persistence and extinction when the population dynamics is given by a system of stochastic differential equations. In our setting, even though our methods and techniques are very different from those used by \cite{HTW88,MS04}, we still make use of the system linearized around the extinction state. The Lyapunov exponent of this linearized system plays a key role throughout our arguments.

\cite{ERSS13} studied a linear stochastic model that describes the dynamics of populations that continuously experience uncertainty in time and space. Their work has
shed some light on key issues from population biology. Their results
provide fundamental insights into ``ideal free'' movement in the face of uncertainty, the evolution of dispersal rates, the single
large or several small (SLOSS) debate in conservation biology, and the persistence of coupled sink populations.
In this paper,
we propose a
density-dependent model of stochastic population growth
that captures the interactions between dispersal and environmental heterogeneity and complements the work of
\cite{ERSS13}. We then present a rigorous and comprehensive study of the proposed model based on stochastic analysis.

 The dynamics of a population in nature is stochastic. This is due to \textit{environmental stochasticity} - the fluctuations of the environment make the growth rates random. One of the simplest models for a population living in a single patch is
 \begin{equation}\label{e:U2}
 d U(t) = U(t)(a-b  U(t))dt + \sigma  U(t)dW(t), t\geq 0,
 \end{equation}
where $U(t)$ is the population abundance at time $t$, $a$ is the mean per-capita growth rate, $b>0$ is the strength of intraspecific competition, $\sigma^2$ is the infinitesimal variance of fluctuations in the per-capita growth rate and $(W(t))_{t\geq 0}$ is a standard Brownian motion. The long-term behavior of \eqref{e:U2} is determined by the \textit{stochastic growth rate} $a-\frac{\sigma^2}{2}$ in the following way
 (see \cite{EHS15, DP84}):
\begin{itemize}
\item If $a-\frac{\sigma^2}{2}>0$ and $ U(0)=u>0$, then $(U(t))_{t\geq 0}$ converges weakly to its unique invariant probability measure $\rho$ on $(0,\infty)$.
\item If $a-\frac{\sigma^2}{2}<0$ and $ U(0)=u>0$, then $\lim_{t\to \infty} U(t)=0$ almost surely.
\item If $a-\frac{\sigma^2}{2}=0$ and $ U(0)=u>0$, then $\liminf_{t\to \infty} U(t)=0$ almost surely, $\limsup_{t\to \infty} U(t)=\infty$ almost surely, and $\lim_{t\to \infty}\frac{1}{t}\int_0^tU(s)\,ds=0$ almost surely.
\end{itemize}

Organisms are always affected by temporal heterogeneities, but they are subject to spatial heterogeneities only when they disperse. Population growth is influenced by spatial heterogeneity through the way organisms respond to environmental signals (see \cite{H83, CC91, C00, SLS09}). There have been several analytic studies that contributed to a better understanding of the separate effects of spatial and temporal heterogeneities on population dynamics. However, few theoretical studies  have considered the combined effects of spatio-temporal heterogeneities, dispersal, and density-dependence for discretely structured populations with continuous-time dynamics.

As seen in both the continuous (\cite{ERSS13}) and the discrete (\cite{PL98}) settings, the extinction risk of a population is greatly affected by the spatio-temporal correlation between the environment in the different patches.
For example, if spatial correlations are weak, one can show that populations coupled via dispersal can survive even though every patch, on its own, would go extinct (see \cite{ERSS13, JY98, HQ89}).
Various species usually exhibit spatial synchrony. Ecologists are interested in this pattern as it can lead to the extinction of rare species. Possible causes for synchrony are dispersal and spatial correlations in the environment (see \cite{L93, K00, LKB04}). Consequently, it makes sense to look at stochastic patch models coupled by dispersion for which the environmental noise of the different patches can be strongly correlated.
We do this by extending the setting of \cite{ERSS13}
   by allowing the environmental noise driving the system
  to be degenerate.

The rest of the  paper is organized as follows. In Section \ref{sec:mod}, we introduce our model for a population living in a patchy environment. It takes into account the dispersal between different patches and density-dependent feedback. The temporal fluctuations of the environmental conditions of the various patches are modeled by Brownian motions that are correlated.
We start by considering the relative abundances of the different patches in a low density approximation. We show that these relative abundances converge in distribution to their unique invariant probability measure
asymptotically as  time goes to infinity. Using this invariant probability measure we derive an expression for $ r$, the stochastic growth rate (Lyapunov exponent) in the absence of competition. We show that this $ r$ is key in analyzing the long-term behavior of the populations. In Appendix \ref{sec:+} we show that if $ r>0$ then the abundances converge weakly, polynomially fast, to their unique invariant probability measure on $(0,\infty)^n$. In Appendix \ref{sec:general-}, we show that if $ r<0$ then all the population abundances go extinct asymptotically, at an exponential rate
(with exponential constant $ r$). Appendix \ref{sec:degenerate} is dedicated to the case when the noise driving our system is degenerate (that is, the dimension of the noise is lower than the number of patches). In Appendix \ref{s:robust}, we show that $ r$ depends continuously on the coefficients of our model and that persistence is robust - that is, small perturbations of the model do not make a persistent system become extinct. We provide some numerical examples and possible generalizations in Section \ref{s:discussion}.

\section{Model and Results}\label{sec:mod}

 We study a population with overlapping generations, which live in a spatio-temporally heterogeneous environment
 consisting of $n$ distinct patches. The growth rate of each patch is determined by both deterministic and stochastic environmental inputs. We denote by $X_i(t)$ the population abundance at time $t\geq 0$ of the $i$th patch and write $\BX(t)=(X_1(t),\dots,X_n(t))$ for the vector of population abundances. Following \cite{ERSS13}, it is appropriate to model $\BX(t)$ as a Markov process with the following properties when $0\leq \Delta t\ll 1$:
\begin{itemize}
\item the conditional mean is
\[\E \left[X_i(t+\Delta t) -X_i(t)~|~X_i(t)=x_i\right]\approx \left[a_ix_i - x_ib_i(x_i) + \sum_{j\neq i} \left(x_jD_{ji}-x_iD_{ij}\right)\right]\Delta t,
\]
where $a_i\in\R$ is the per-capita growth rate in the $i$th patch, $b_i(x_i)$ is the per-capita strength of intraspecific competition in patch $i$ when the abundance of the patch is $x_i$, and $D_{ij}\geq 0$ is the dispersal rate from patch $i$ to patch $j$;
\item the conditional covariance is
\[
\Cov\left[X_i(t+\Delta t) - X_i(t), X_j(t+\Delta t) - X_j(t)~|~\BX= \mathbf{x} \right]\approx\sigma_{ij}x_ix_j\Delta t
\]
for some covariance matrix $\Sigma=(\sigma_{ij})$.
\end{itemize}
The difference between our model and the one from \cite{ERSS13} is that we added density-dependent feedback through the $x_ib_i(x_i)$ terms.

We work on a complete probability space $(\Omega,\F,\{\F_t\}_{t\geq0},\PP)$ with filtration $\{\F_t\}_{t\geq 0}$ satisfying the usual
conditions. We consider the system
\begin{equation}\label{e4.0}
d X_i(t)=\left(X_i(t)\left(a_i-b_i(X_i(t))\right)+\sum_{j=1}^n D_{ji}X_j(t)\right)dt+X_i(t)dE_i(t), \, i=1,\dots,n,
\end{equation}
where $D_{ij}\geq0$ for $j\ne i$ is the per-capita rate at which the population in patch $i$ disperses to patch $j$, $D_{ii}=-\sum_{j\ne i} D_{ij}$ is the total per-capita immigration rate out of patch $i$, $\BE(t)=(E_1(t),\dots, E_n(t))^T=\Gamma^\top\BB(t)$,  $\Gamma$ is a $n\times n$ matrix such that
$\Gamma^\top\Gamma=\Sigma=(\sigma_{ij})_{n\times n}$
and $\BB(t)=(B_1(t),\dots, B_n(t))$ is a vector of independent standard Brownian motions adapted to the filtration $\{\F_t\}_{t\geq 0}$.
Throughout the paper, we
work with the following assumption regarding the growth of the instraspecific 
competition rates.

\begin{asp}\label{a:competition} {\rm
For each $i=1,\dots,n$ the function $b_i:\R_+\mapsto\R$ is locally Lipschitz and vanishing at $0$. Furthermore, there are $M_b>0$, $\gamma_b>0$ such that
\begin{equation}\label{e:b}
\dfrac{\sum_{i=1}^n x_i(b_i(x_i)-a_i)}{\sum_{i=1}^n x_i}>\gamma_b\text{ for any } x_i\geq0, i=1,\dots,n \text{ satisfying } \sum_{i=1}^n x_i\geq M_b
\end{equation} }
\end{asp}

\begin{rmk} {\rm
Note that if we set $x_j=x\geq M_b$ and $x_i=0, i\neq j$, we get from \eqref{e:b} that
\[
b_j(x) - a_j > \gamma_b, \,x\geq M_b, j=1,\dots,n.
\]
}\end{rmk}

\begin{rmk}
{\rm Note that condition \eqref{e:b} is biologically reasonable because it holds if the $b_i$'s are sufficiently
large for large $x_i$'s. We
provide some simple scenarios
when Assumption \ref{a:competition} is satisfied.
\begin{itemize}
\item[a)] Suppose $b_i:[0,\infty)\to [0,\infty), i=1,\dots, n$ are locally Lipschitz and vanishing at $0$. Assume that there exist $\gamma_b>0, \tilde M_b>0$ such that
\[
\inf_{x\in  [\tilde M_b,\infty)} b_i(x) - a_i-\gamma_b>0,~ i =1,\dots,n
\]
It is easy to show that Assumption \ref{a:competition} holds.

\item[b)] Particular cases of (a) are for example, any $b_i:\R_+\mapsto \R$ that are locally Lipschitz, vanishing at $0$
such that $\lim_{x\to \infty} b_i(x)=\infty$.

\item[c)] One natural choice for the competition functions, which is widely used throughout the literature,
is $b_i(x)=\kappa_i x, x\in (0,\infty)$ for some $\kappa_i> 0$. In this case the competition terms become $-x_ib(x_i) = - \kappa_i x_i^2$.

\end{itemize}
}\end{rmk}

\begin{rmk}
{\rm
Note that if we have the SDE
\begin{equation}\label{e4.0g}
d  X_i(t)=\left(X_i(t)f_i(X_i(t))+\sum_{j=1}^n D_{ji}X_j(t)\right)dt+X_i(t)dE_i(t), \, i=1,\dots,n
,\end{equation}
where $f_i$ are locally Lipschitz this can always be rewritten in the form \eqref{e4.0} with
\[
a_i := f_i(0) \,\text{ and }\, b_i(x) :=  f_i(0) - f_i(x),\, \,i=1,\dots,n.
\]
Therefore, our setting is in fact
very general and incorporates both nonlinear growth rates and nonlinear competition terms.

The drift $\tilde f(\bx)=(\tilde f_1(\bx),\dots,\tilde f_n(\bx))$ where $\tilde f_i(\bx)=x_i(a_i-b_i(x_i))+\sum_{j=1}^n D_{ji}X_j(t)$
is sometimes said to be \textit{cooperative}. This is
because $f_i(\bx)\leq f_i(\by)$ if $(\bx,\by)\in \R^n_+$ such that $x_i=y_i, x_j\leq y_j$ for $j\ne i$.
A distinctive property of \textit{cooperative systems}
is that comparison arguments are generally satisfied.
We refer to \cite{IG} for more details.}
\end{rmk}

\begin{rmk}
If the dispersal matrix $(D_{ij})$ has a normalized dominant
left eigenvector  $\alpha=(\alpha_1,\dots,\alpha_n)$ then one can show that the system
\[
d X_i(t)=\left(X_i(t)\left(a_i-b_iX_i(t)\right)+\delta \sum_{j=1}^n D_{ji}X_j(t)\right)dt+X_i(t)dE_i(t), \, i=1,\dots,n,
\]
converges as $\delta\to\infty$ to a system $(\tilde X_1(t),\dots,\tilde X_n(t))$ for which
\[
\tilde X_i(t)=\alpha_i \tilde X(t),\, t\geq 0, \, i=1,\dots,n,
\]
where $\tilde X(t)=\tilde X_1(t)+\dots + \tilde X_n(t)$ and $\tilde X$ is an autonomous Markov process that satisfies the SDE
\[
d\tilde X(t) = \tilde X(t) \sum_{i=1}^n \alpha_i(a_i-b_i\alpha_i\tilde X(t))\,dt + \tilde X(t)\sum_{i=1}^n\alpha_i\,dE_i(t).
\]
As such, our system is a general version of the system treated in \cite{EHS15}. One can recover the system from \cite{EHS15} as an infinite dispersion limit of ours.
\end{rmk}

We
denote
by $\BX^{\mathbf{x}}(t)$ the solution of \eqref{e4.0} started at $\BX(0)= \mathbf{x}\in \R^n_+$. Following \cite{ERSS13}, we call
matrices $D$ with zero row sums and non-negative off-diagonal entries \textit{dispersal matrices}. If $D$ is a dispersal matrix,
then it is a generator of
a continuous-time Markov chain.
Define $P_t:=\exp(tD), t\geq 0$. Then $P_t, t\geq 0$ is a matrix with non-negative entries that gives the transition probabilities of a Markov chain: The $(i,j)$th entry of $P_t$ gives the proportion of the population that was initially in patch $i$ at time $0$ but has dispersed to patch $j$ at time $t$ and $D$ is the generator of this Markov chain.
If one wants to include mortality induced
because of dispersal, one can add cemetery patches in which dispersing individuals enter and experience a killing rate before moving to their final destination.
Our model is a density-dependent generalization of the one by \cite{ERSS13}. We are able to prove that the linearization of the density-dependent model fully determines the non-linear density-dependent behavior,
a fact which was conjectured by \cite{ERSS13}. Furthermore, we prove stronger convergence results and thus extend the work of \cite{ERSS13}. Analogous results for discrete-time versions of
the model have been studied by \cite{BS09} for discrete-space and by \cite{HTW88, HTW88b} for continuous-space.

We will work under the following assumptions.

\begin{asp}\label{a:dispersion} {\rm
The dispersal matrix $D$ is \textit{irreducible}.
}\end{asp}
\begin{asp}\label{a:nonsingular} {\rm
The covariance matrix $\Sigma$ is non-singular.
}\end{asp}

Assumption \ref{a:dispersion} is equivalent to forcing the entries of the matrix $P_t=\exp(tD)$ to be strictly positive for all $t>0$. This means that it is possible for the population to disperse between any two patches. We can always reduce our problem to this setting by working with the maximal irreducible subsets of patches.
Assumption \ref{a:nonsingular} says that our randomness is non-degenerate, and thus truly $n$-dimensional. We show in Appendix \ref{sec:degenerate} how to get the desired results when Assumption \ref{a:nonsingular} does not hold.

Throughout the paper we set $\R^n_+:=[0,\infty)^n$ and $\R_+^{n,\circ}:=(0,\infty)^n$. We define the total abundance of our population at time $t\geq 0$ via
$S(t):=\sum_{i=1}^n X_i(t)$ and
let $Y_i(t):=\frac{X_i(t)}{S(t)}$ be the proportion of the total population that is in patch $i$ at time $t\geq0$. Set $\BY(t)=(Y_1(t),\dots, Y_n(t))$.
   An application of It\^o's lemma to \eqref{e4.0} yields
\begin{equation}\label{e4.1}
\begin{split}
dY_i(t)=&Y_i(t)\left(a_i-\sum_{j=1}^na_jY_j(t)-b_i(S(t)Y_i(t))+\sum_{j=1}^nY_j(t)b_j(S(t)Y_j(t))\right)dt+\sum_{j=1}^{n}D_{ji}Y_j(t)dt\\
&+Y_i(t)\left(\sum_{j,k=1}^n\sigma_{kj}Y_k(t)Y_j(t))-\sum_{j=1}^n\sigma_{ij}Y_j(t)\right)dt  +Y_i(t)\left[dE_i(t)-\sum_{j=1}^n Y_j(t)dE_j(t)\right]\\
dS(t)=&S(t)\left(\sum_{i=1}^n(a_iY_i(t)-Y_i(t)b_i(S(t)Y_i(t)))\right)dt+S(t)\sum_{i=1}^nY_i(t)dE_i(t)
\end{split}
\end{equation}
We can rewrite \eqref{e4.1} in the following compact equation for $(\BY(t), S(t))$ where $\bb(\bx)=(b_1(x_1),\dots, b_n(x_n))$.
\begin{equation}\label{eq.bys}
\begin{split}
d\BY(t)=&\left(\diag(\BY(t))-\BY(t)\BY^\top(t)\right)\Gamma^\top d\BB(t)\\
&+\BD^\top\BY(t)dt+\left(\diag(\BY(t))-\BY(t)\BY^\top(t)\right)(\ba-\Sigma \BY(t)-\bb(S(t)\BY(t)))dt\\
dS(t)=&S(t)\left[\ba-{\bb(S(t)\BY(t))}\right]^\top\BY(t)dt+S(t){\BY(t)}^\top \Gamma^\top d\BB(t),
\end{split}
\end{equation}
where $\BY(t)$ lies in the simplex $\Delta:=\{(y_1,\dots,y_n)\in\R^{n}_+: y_1+\dots+y_n=1\}$.
Let $\Delta^{\circ}=\{(y_1,\dots,y_n)\in\R^{n,\circ}_+: y_1+\dots+y_n=1\}$ be the interior of $\Delta$.

Consider equation \eqref{eq.bys} on the boundary $((\by,s): \by\in\Delta, s=0)$ (that is, we set $S(t)\equiv 0$ in the equation for $\BY(t)$). We have
the following system
\begin{equation}\label{eq.by}
\begin{split}
d\tilde\BY(t)=&\left(\diag(\tilde\BY(t))-\tilde\BY(t)\tilde\BY^\top(t)\right)\Gamma^\top d\BB(t)\\
&+\BD^\top\tilde\BY(t)dt+\left(\diag(\tilde\BY(t))-\tilde\BY(t)\tilde\BY^\top(t)\right)(\ba-\Sigma \tilde\BY(t))dt
\end{split}
\end{equation}
on the simplex $\Delta$.
We also introduce the linearized version of \eqref{e4.0}, where the competition terms $b_i(x_i)$ are all set to $0$,
\begin{equation}\label{e1.1c}
d\mathcal X_i(t)=\left(\mathcal X_i(t)a_i+\sum_{j=1}^n D_{ji}\mathcal X_j(t)\right)dt+\mathcal X_i(t)dE_i(t), \,\, i=1,\dots,n.
\end{equation}
and let $\mathcal{S}(t)=\sum_{i=1}^n\mathcal{X}_i(t)$ be the total population abundance, in the absence of competition. The processes $(\mathcal{X}_1(t),\dots,\mathcal{X}_n(t))$, $\tilde\BY(t)$ and $\mathcal{S}(t)$ have been studied by \cite{ERSS13}.

\cite[Proposition 3.1]{ERSS13} proved that the process $(\tilde\BY(t))_{t\geq 0}$ is an irreducible Markov process, which has the strong Feller property and admits a unique invariant probability measure $\nu^*$ on $\Delta$. Let $\tilde \BY(\infty)$ be a random variable on $\Delta$ with distribution $\nu$. We define
\begin{equation}\label{lambda}
 r:=\int_{\Delta}\left(\ba^\top{\bf y}-\frac12{\bf y}^\top\Sigma{\bf y}\right)\nu^*(d{\bf y})= \sum_{i}a_i\E\left[\tilde Y_i(\infty)\right] - \frac{1}{2} \E\left[\sum_{ij}\sigma_{ij}\tilde Y_i(\infty)\tilde Y_j(\infty)\right]
\end{equation}

\begin{rmk}{\rm
We note that $ r$ is the stochastic growth rate (or Lyapunov exponent) of the total population $\mathcal{S}(t)$ in the absence of competition. That is,
\[
\PP\left\{\lim_{t\to \infty}\frac{\ln\mathcal{S}^{\bx}(t)}{t} =  r\right\} =1.
\]
The expression \eqref{lambda} for $ r$ coincides with the one derived by \cite{ERSS13}.
}\end{rmk}

We use
superscripts
to denote the starting points of our processes. For example $(\BY^{\by, s}(t), S^{\by, s}(t))$ denotes the solution of \eqref{e4.1} with $(\BY(0), S(0))=(\by,s)\in \Delta\times (0,\infty)$. Fix  $\bx\in\R^n_+$ and define the \textit{normalized occupation measures},
\begin{equation}\label{e:occupation}
\Pi^{(\bx)}_t(\cdot)=\dfrac1t\int_0^t\1_{\{\BX^{\bx}(u)\in\cdot\}}du.
\end{equation}
These random measures describe the distribution of the observed population dynamics up to time $t$. If we define the sets
\[
S_\eta:=\{\bx=(x_1,\dots,x_n)\in\R^{n,\circ}_+: |x_i|\leq\eta~\text{for some}~i=1,\dots,n\},
\]
then $\Pi^{(\bx)}_t(S_\eta)$ is the fraction of the time in the interval $[0,t]$ that the total abundance of some patch is less than $\eta$ given that our population starts at $\BX(0)=\bx$.

\begin{deff}
One can define a distance on the space of probability measures living on the Borel measurable subsets of $\R_+^n$, that is on the space $(\R_+^n,\mathcal{B}(\R_+^n))$. This is done by defining $\|\cdot,\cdot\|_{\text{TV}}$, the \textit {total variation norm}, via
\[
\|\mu,\nu\|_{\text{TV}} := \sup_{A\in \mathcal{B}(\R_+^n)} |\mu(A)-\nu(A)|.
\]
\end{deff}

\begin{thm}\label{t:survival}
Suppose that Assumptions \ref{a:dispersion} and \ref{a:nonsingular} hold and that $ r>0$.
The process $\BX(t) = (X_1(t),\dots,X_n(t))_{t\geq 0}$ has a unique invariant probability measure $\pi$ on $\R^{n,\circ}_+$ that is absolutely continuous with respect to the Lebesgue measure and for any $q^*>0$,
\begin{equation}
\lim\limits_{t\to\infty} t^{q^*}\|P_\BX(t, \mathbf{x}, \cdot)-\pi(\cdot)\|_{\text{TV}}=0, \;\mathbf{x}\in\R^{n,\circ}_+,
\end{equation}
and $P_\BX(t,\mathbf{x},\cdot)$ is the transition probability of $(\BX(t))_{t\geq 0}$. Moreover, for any initial value $\mathbf{x}\in\R^{n}_+\setminus\{\mathbf{0}\}$ and any $\pi$-integrable function $f$ we have
\begin{equation}\label{slln}
\PP\left\{\lim\limits_{T\to\infty}\dfrac1T\int_0^Tf\left(\BX^{\mathbf{x}}(t)\right)dt=\int_{\R_+^{n,\circ}}f(\mathbf{u})\pi(d\mathbf{u})\right\}=1.
\end{equation}
\end{thm}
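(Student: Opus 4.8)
The plan is to establish persistence and ergodicity via a Lyapunov/Foster–Meyn–Tweedie argument combined with the irreducibility and strong Feller property of the diffusion on $\R^{n,\circ}_+$. First I would verify that \eqref{e4.0} has a unique global strong solution staying in $\R^n_+$ (local Lipschitz coefficients plus Assumption \ref{a:competition} giving non-explosion through a Lyapunov function like $V(\bx)=\sum x_i^p$ or $\sum x_i$ for moment bounds); here the cooperativity of the drift and the competition bound \eqref{e:b} keep $S(t)=\sum X_i(t)$ from blowing up, in fact yielding uniform-in-time moment estimates $\sup_t \E S^{\bx}(t)^{p}<\infty$ for suitable $p>0$. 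The key structural tool is the decomposition into $(\BY(t),S(t))$ from \eqref{e4.1}–\eqref{eq.bys}: on long time scales $\BY(t)$ tracks the ``fast'' process $\tilde\BY(t)$ whose unique invariant measure $\nu^*$ defines $r$ through \eqref{lambda}, while $\ln S(t)$ behaves like a time integral of $\ba^\top\BY - \tfrac12\BY^\top\Sigma\BY$ plus a martingale.

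The heart of the proof is showing that $r>0$ forces the population away from the extinction set $S_\eta$. I would prove a \emph{persistence} statement: there exist $\eta>0$ and constants such that, uniformly over starting points with $S(0)$ bounded away from $0$ and $\infty$, $\limsup_{t\to\infty}\Pi^{(\bx)}_t(S_\eta)$ is small almost surely — ideally via the estimate $\frac1t\int_0^t \big(\ba^\top\BY^{\by,s}(u)-\tfrac12 (\BY^{\by,s}(u))^\top\Sigma\BY^{\by,s}(u)\big)\,du \to r$ when $S$ stays small (so the competition terms $b_i(S Y_i)$ are negligible), using the ergodic theorem for $\tilde\BY$ and a comparison/continuity argument bounding $\BY$ by $\tilde\BY$ on the time intervals where $S$ is near $0$. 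This gives $\frac1t\ln S^{\by,s}(t)\to r>0$ on such intervals, contradicting $S$ remaining small; made quantitative this yields a uniform lower bound $\int \1_{S_\eta^c}\,\Pi^{(\bx)}_t \to$ something $>1-\epsilon$, i.e.\ tightness of $\{\Pi^{(\bx)}_t\}$ on $\R^{n,\circ}_+$ and hence existence of an invariant probability measure $\pi$ supported in $(0,\infty)^n$.

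For uniqueness and absolute continuity of $\pi$ I would invoke Assumptions \ref{a:dispersion} and \ref{a:nonsingular}: non-degeneracy of $\Sigma$ together with irreducibility of $D$ gives, via a Hörmander-type / support-theorem argument on the interior $\R^{n,\circ}_+$, that the transition kernel $P_\BX(t,\bx,\cdot)$ is strong Feller and that the process is irreducible (every open subset of $(0,\infty)^n$ is reached with positive probability). Standard theory (Doob, or the Meyn–Tweedie/Stettner framework) then forces uniqueness of the invariant measure, its absolute continuity with respect to Lebesgue, and the strong law \eqref{slln} for all starting points in $\R^n_+\setminus\{\mathbf 0\}$ (using that such trajectories enter the interior instantaneously and then couple to the stationary regime). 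Finally, the polynomial rate $t^{q^*}\|P_\BX(t,\bx,\cdot)-\pi\|_{\mathrm{TV}}\to 0$ for \emph{every} $q^*>0$ I would obtain by constructing, for each $q^*$, a Lyapunov function $V_{q^*}$ with $\mathcal L V_{q^*}\le -c V_{q^*}^{1-\theta}+C\1_{K}$ on a petite set $K$ — the drift toward the interior coming from the persistence estimate near the boundary faces $\{x_i=0\}$ and the drift toward bounded sets coming from the competition bound \eqref{e:b} at infinity — and then applying the subgeometric ergodicity results of Douc–Fort–Guillin / Meyn–Tweedie; since the ``distance to the boundary'' decay is in fact exponential and the tail is controlled at every polynomial order, one gets convergence faster than any polynomial.

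The main obstacle, I expect, is the persistence step near the boundary: one must control the process $\BY(t)$ (which does \emph{not} live on a compact invariant set once $S>0$, since the competition terms perturb \eqref{eq.bys} away from \eqref{eq.by}) uniformly well enough to transfer the ergodic average of $\ba^\top\BY-\tfrac12\BY^\top\Sigma\BY$ from $\tilde\BY$ to $\BY$ on excursions where $S$ is small, and to do so uniformly over all the boundary faces $\{x_i=0,\ i\in I\}$ — this is where the irreducibility of $D$ is essential (it prevents any proper sub-collection of patches from being invariant) and where the bulk of the technical work in Appendix \ref{sec:+} presumably lies.
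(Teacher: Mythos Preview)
Your outline is essentially the paper's strategy: pass to $(\BY,S)$, exploit that $\BY\approx\tilde\BY$ when $S$ is small so $\ln S$ drifts up at rate $\approx r>0$, combine with the competition bound at infinity, and apply subgeometric (Jarner--Roberts/Douc--Fort--Guillin) ergodicity on a petite compact set obtained from non-degeneracy and irreducibility. The one point that needs sharpening is the form of the drift inequality. A \emph{generator}-level condition $\mathcal L V_{q^*}\le -cV_{q^*}^{1-\theta}+C\1_K$ will not hold pointwise: the drift of $\ln S$ is $\ba^\top\by-\tfrac12\by^\top\Sigma\by-\bb(s\by)^\top\by$, which depends on $\by\in\Delta$ and is not positive for every $\by$; only its $\nu^*$-average equals $r>0$. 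The paper therefore works with the \emph{$T^*$-skeleton} $(\BY(kT^*),S(kT^*))_{k\in\N}$ for a large fixed $T^*$: one first shows (via exponential ergodicity of $\tilde\BY$ on $\Delta$ and a continuity estimate $\|(\BY,S)-(\tilde\BY,0)\|$ small for small $s$) that $\PP\{\ln S(T^*)\ge \ln s+\tfrac{3r}{4}T^*\}\ge 1-3\eps$ when $s$ is small, and from this derives the one-step inequality $\E\,V(S(T^*))\le V(s)-c\,V(s)^{1/(1+q)}+C\1_K$ for $V(s)=s+1+[\ln s]_-^{1+q}$. Your ``persistence on excursions'' heuristic is exactly right---it just has to be implemented for the sampled chain, not the infinitesimal generator, and the Lyapunov function lives on the $S$-coordinate alone.
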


\begin{rmk} {\rm
Theorem \ref{t:survival} is a direct consequence of Theorem \ref{thm2.2}, which
will be proved in Appendix \ref{sec:+}. As a corollary we get the following result.
}\end{rmk}

\begin{deff} {\rm
Following \cite{RS14}, we say
that
the model \eqref{e4.0} is stochastically persistent if for all $\eps>0$, there exists $\eta>0$ such that with probability one,
\[
\Pi^{(\bx)}_t(S_\eta)\leq \eps
\]
for $t$ sufficiently large and $\bx\in S_\eta\setminus \{\mathbf{0}\}$.
}\end{deff}

\begin{cor}
If Assumptions \ref{a:dispersion} and \ref{a:nonsingular} hold, and $ r>0$, then the process $\BX(t)$ is stochastically persistent.
\end{cor}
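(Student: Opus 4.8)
The plan is to deduce the corollary directly from Theorem \ref{t:survival} by unpacking the definition of stochastic persistence and using the almost sure ergodic statement \eqref{slln}. First I would fix $\eps>0$. Since $\pi$ is a probability measure on $\R^{n,\circ}_+$, it assigns no mass to the boundary, so $\pi(S_\eta)\downarrow \pi(\bigcap_{\eta>0}S_\eta)=\pi(\emptyset)=0$ as $\eta\downarrow 0$ by continuity of measures from above. Hence I can choose $\eta>0$ small enough that $\pi(S_\eta)\leq \eps$. Strictly speaking $S_\eta$ is not closed in $\R^n_+$ (its indicator may fail to be continuous), but it is Borel measurable, so $f=\1_{S_\eta}$ is a bounded, hence $\pi$-integrable, function and \eqref{slln} applies verbatim.

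Next I would apply \eqref{slln} with this choice of $f=\1_{S_\eta}$. For any starting point $\bx\in\R^n_+\setminus\{\mathbf{0}\}$ — in particular for $\bx\in S_\eta\setminus\{\mathbf{0}\}$ — we get
\[
\PP\left\{\lim_{t\to\infty}\Pi^{(\bx)}_t(S_\eta)=\lim_{t\to\infty}\frac1t\int_0^t\1_{\{\BX^{\bx}(u)\in S_\eta\}}\,du=\int_{\R^{n,\circ}_+}\1_{S_\eta}(\mathbf{u})\,\pi(d\mathbf{u})=\pi(S_\eta)\leq\eps\right\}=1.
\]
Thus with probability one there exists a (random) time $T_0(\omega)$ such that $\Pi^{(\bx)}_t(S_\eta)\leq\eps$ for all $t\geq T_0(\omega)$, which is precisely the assertion that $\Pi^{(\bx)}_t(S_\eta)\leq\eps$ for $t$ sufficiently large. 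Since this holds for every $\bx\in S_\eta\setminus\{\mathbf{0}\}$ (indeed for every nonzero starting point), the model \eqref{e4.0} is stochastically persistent in the sense of the definition preceding the corollary.

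There is essentially no hard step here; the only point requiring a moment's care is the measure-theoretic one that $\pi(S_\eta)\to 0$, which I would want to state explicitly rather than gloss over: the key input is Theorem \ref{t:survival}'s assertion that $\pi$ is supported on $\R^{n,\circ}_+$ (equivalently, absolutely continuous with respect to Lebesgue measure, so it charges no lower-dimensional set and in particular no neighborhood of the boundary can be forced to have positive mass in the limit $\eta\to 0$). One might also remark that a slightly strengthened, uniform-in-$\bx$ version of persistence would require more than the pointwise ergodic theorem — e.g. tightness estimates on the occupation measures uniform over compact sets of initial data — but the stated definition only asks for the conclusion for each fixed $\bx\in S_\eta\setminus\{\mathbf{0}\}$, so \eqref{slln} suffices as it stands.
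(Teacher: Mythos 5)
Your proof is correct and follows essentially the same route as the paper: the paper deduces the corollary in one line from Theorem \ref{t:survival} by noting that the occupation measures converge to $\pi$ almost surely and that $\pi$ is supported on $\R^{n,\circ}_+$, while you make the same deduction explicit by applying \eqref{slln} to $f=\1_{S_\eta}$ and using $\pi(S_\eta)\downarrow 0$. The only cosmetic point is that you should choose $\eta$ with $\pi(S_\eta)<\eps$ strictly (which the monotone limit $\pi(S_\eta)\to 0$ permits), since $\lim_t\Pi^{(\bx)}_t(S_\eta)=\pi(S_\eta)=\eps$ alone would not force $\Pi^{(\bx)}_t(S_\eta)\leq\eps$ for all large $t$.
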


\begin{proof}
By Theorem \ref{t:survival}, we have that for all $\mathbf{x}\in\R^{n,\circ}_+$,
\[
\PP\left\{\Pi^{(\bx)}_t \Rightarrow \pi ~\text{as}~t\to\infty\right\}=1.
\]
Since $\pi$ is supported on $\R^{n,\circ}_+$, we get the desired result.
\end{proof}

\textbf{Biological interpretation of Theorem \ref{t:survival}} \textit{The quantity $ r$ is the Lyapunov exponent or stochastic growth rate of the total population process $(\mathcal{S}(t))_{t\geq 0}$ in the absence of competition. This number describes the long-term growth rate of the population in the presence of a stochastic environment. According to \eqref{lambda} $ r$ can be written as the difference $\bar \mu - \frac{1}{2}\bar\sigma^2$ where}
\begin{itemize}
\item \textit{$\bar \mu$ is the average of per-capita growth rates with respect to the asymptotic distribution $\tilde \BY(\infty)$ of the population in the absence of competition.}
\item \textit{$\bar\sigma^2$ is the infinitesimal variance of the environmental stochasticity averaged according to the asymptotic distribution of the population in the absence of competition.}
\end{itemize}
\textit{We note by \eqref{lambda} that $ r$ depends on the dispersal matrix, the growth rates at $0$ and the covariance matrix of the environmental noise. As such, the stochastic growth rate can change due to the dispersal strategy or environmental fluctuations. }

\textit{When the stochastic growth rate of the population in absence of competition is strictly positive (i.e. $ r>0$) our population is persistent in a strong sense: for any starting point $(X_1(0),\dots,X_n(0)) = (x_1,\dots,x_n)\in \R_+^{n,\circ}$ the distribution of the population densities at time $t$ in the $n$ patches $(X_1(t)$, $\dots$, $X_n(t))$ converges as $t\to \infty$ to the unique probability measure $\pi$ that is supported on $\R_+^{n,\circ}$.}

\begin{deff}{\rm
We say the population of patch $i$ goes extinct if for all $\bx\in\R^{n}_+\setminus\{\mathbf{0}\}$
\[
\PP\left\{\lim_{t\to\infty}X^\bx_i(t)=0\right\}=1.
\]
We say the population goes extinct if the populations from all the patches go extinct, that is if for all $\bx\in\R^{n}_+\setminus\{\mathbf{0}\}$
\[
\PP\left\{\lim_{t\to\infty}\BX^\bx(t)=\mathbf{0}\right\}=1.
\] }
\end{deff}

\begin{thm}\label{t:extinction}
Suppose that Assumptions \ref{a:dispersion} and \ref{a:nonsingular} hold and that $ r<0$.
Then for any $i=1,\dots,n$ and any $\mathbf{x} = (x_1,\dots,x_n)\in \R_+^{n}$,
\begin{equation}
\PP\left\{\lim_{t\to\infty}\frac{\ln {X}_i^{\mathbf{x}}(t)}{t}=  r\right\}=1.
\end{equation}
\end{thm}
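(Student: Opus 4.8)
The plan is to show that the logarithmic growth rate of the total population $S(t)=\sum_i X_i(t)$ equals $r$ almost surely, and then to deduce that each individual $X_i(t)$ has the same logarithmic decay rate using irreducibility of the dispersal matrix. First I would compare $\BX(t)$ with the linearized system $\mathcal{X}(t)$ from \eqref{e1.1c}. Since $b_i\geq 0$ on the relevant range and the drift is cooperative (as noted in the remarks), a standard comparison argument for cooperative systems gives $X_i(t)\leq \mathcal{X}_i(t)$ for all $i$ and all $t\geq 0$ whenever the two systems start from the same initial condition. Combined with the remark stating that $\lim_{t\to\infty}\frac{1}{t}\ln\mathcal{S}^{\bx}(t)=r$ almost surely (this is the Lyapunov exponent result of \cite{ERSS13}), the comparison immediately yields the upper bound
\[
\limsup_{t\to\infty}\frac{\ln X_i^{\bx}(t)}{t}\leq \limsup_{t\to\infty}\frac{\ln \mathcal{S}^{\bx}(t)}{t}= r\quad\text{a.s.}
\]

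For the matching lower bound I would work directly with $S(t)$ using the second equation in \eqref{e4.1}:
\[
d\ln S(t)=\left(\sum_{i=1}^n\bigl(a_iY_i(t)-Y_i(t)b_i(S(t)Y_i(t))\bigr)-\tfrac12\sum_{i,j}\sigma_{ij}Y_i(t)Y_j(t)\right)dt+\sum_{i=1}^n Y_i(t)\,dE_i(t).
\]
Since $r<0$, the upper bound already forces $S(t)\to 0$, hence $S(t)Y_i(t)=X_i(t)\to 0$ a.s., so by continuity of $b_i$ and $b_i(0)=0$ the competition term $\sum_i Y_i(t)b_i(S(t)Y_i(t))$ tends to $0$. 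Dividing $\ln S(t)$ by $t$, the martingale term vanishes by the strong law of large numbers for martingales (its quadratic variation grows at most linearly since $\BY(t)$ lives on the compact simplex $\Delta$), and the competition contribution is $o(1)$ in Cesàro average by the above. What remains is $\frac1t\int_0^t\bigl(\ba^\top\BY(u)-\tfrac12\BY(u)^\top\Sigma\BY(u)\bigr)du$; by \cite[Proposition 3.1]{ERSS13} the process $\BY(t)$ is asymptotic to the ergodic process $\tilde\BY(t)$ with invariant measure $\nu^*$, so this average converges to $\int_\Delta(\ba^\top\by-\tfrac12\by^\top\Sigma\by)\nu^*(d\by)=r$. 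Making the "asymptotic to $\tilde\BY$" step rigorous — controlling the difference between $\BY$ and $\tilde\BY$ when $S(t)$ is small but positive — is the main obstacle; I would handle it by a Gronwall-type estimate on the difference of the two SDEs \eqref{eq.bys} and \eqref{eq.by}, using that their drift and diffusion coefficients differ only by the term $\bb(S(t)\BY(t))$, which is uniformly small once $S(t)$ is small. This gives $\lim_{t\to\infty}\frac1t\ln S^{\bx}(t)=r$ a.s.

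Finally I would pass from $S(t)$ to the individual coordinates. The upper bound $\limsup\frac1t\ln X_i(t)\leq r$ was already established. For the lower bound, since $S(t)=\sum_j X_j(t)$, there is always at least one index $j$ with $X_j(t)\geq S(t)/n$, so $\limsup\frac1t\ln X_j(t)\geq\lim\frac1t\ln S(t)=r$ along a subsequence of times; to upgrade this to a genuine $\liminf\frac1t\ln X_i(t)\geq r$ for \emph{every} $i$, I would use the dispersal coupling. Concretely, from \eqref{e4.0}, $dX_i(t)\geq\bigl(-c X_i(t)+\sum_{j}D_{ji}X_j(t)\bigr)dt+X_i(t)\,dE_i(t)$ for a constant $c$ bounding $|a_i-b_i|$ near the origin, and irreducibility of $D$ (Assumption \ref{a:dispersion}) means mass placed in any patch $j$ reaches patch $i$ within a bounded time; a variation-of-constants representation then shows $X_i(t)$ cannot decay faster than $S(t)$ does, up to a bounded multiplicative factor and time shift, giving $\liminf\frac1t\ln X_i(t)\geq r$. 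Combining the two bounds yields $\lim_{t\to\infty}\frac{\ln X_i^{\bx}(t)}{t}=r$ a.s. for all $i$ and all $\bx\in\R_+^n\setminus\{\mathbf{0}\}$, as claimed. The treatment of $\bx$ on the boundary of $\R_+^n$ follows since $X_i(t)>0$ for all $t>0$ by the same irreducibility argument.
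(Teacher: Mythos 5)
Your overall architecture (upper bound by comparison with the linear system, lower bound by analyzing $\ln S$ and an ergodic average, then transfer to the individual coordinates) matches the paper's, but three steps contain genuine gaps. First, the comparison $X_i(t)\leq\mathcal X_i(t)$ requires $b_i\geq 0$, which Assumption \ref{a:competition} does not grant ($b_i$ is only required to be locally Lipschitz, vanish at $0$, and satisfy \eqref{e:b}); the paper circumvents this by comparing with a $\theta$-perturbed linear system with rates $a_i+\theta$, choosing $\delta$ with $\sup\{-b_i(x):x<\delta\}<\theta$, localizing to the event that the perturbed total population stays below $\delta$, and invoking the continuity of $r$ in the coefficients (Proposition \ref{p:cts_dep}) to keep the perturbed exponent negative. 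Second, and most seriously, your plan for the central step $\frac1T\int_0^T\bigl(\ba^\top\BY(t)-\frac12\BY(t)^\top\Sigma\BY(t)\bigr)dt\to r$ --- a Gronwall comparison of \eqref{eq.bys} with \eqref{eq.by} --- cannot work over an infinite horizon: Gronwall constants grow exponentially in $T$, and $\BY$ and $\tilde\BY$ are not pathwise asymptotic (small differences get amplified); only their occupation measures agree in the limit. The paper's actual mechanism is that, once $S(t)\to0$ a.s. is known, the normalized occupation measures of $(\BY(t),S(t))$ are a.s. tight on $\Delta\times[0,\infty)$ and every weak limit point is an invariant probability measure of the process; since the only invariant measure is $\nu^*\times\delta_0^*$ (supported on the boundary $S=0$, where the dynamics is exactly $\tilde\BY$ with its unique invariant measure $\nu^*$), the Ces\`aro average of any bounded continuous functional converges to its $\nu^*$-average. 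That invariant-measure argument, not a pathwise estimate, is the missing idea.

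Third, the transfer from $S$ to $X_i$: your claim that the variation-of-constants factor is ``bounded'' is false --- it involves increments of $E_i$ over unit windows, which grow like $\sqrt{\log t}$ by the law of the iterated logarithm, and irreducibility of $D$ only gives a path $j\to\cdots\to i$, so the bound must be iterated along that path with fluctuations controlled uniformly in $t$. This can likely be repaired (the factor is subexponential, which suffices after dividing by $t$), but it is not the paper's route: the paper instead proves $\frac1T\ln Y_i^{\by,s}(T)\to0$ directly from It\^o's formula for $\ln Y_i$, where the delicate point is the unbounded integrand $\sum_{j\neq i}D_{ji}Y_j/Y_i$; this is handled by truncation at level $k$, the identity $\beta_i+\rho_i=0$ (forced because $\beta_i+\rho_i<0$ would make $\tilde Y_i\to0$, contradicting $\nu^*(\Delta^\circ)=1$), and the occupation-measure convergence again. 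In short, your skeleton is right, but the two places you flag as obstacles are exactly where the proof lives, and the tools you propose for them (a Gronwall bound and a bounded multiplicative factor) do not close the gaps.
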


\textbf{Biological interpretation of Theorem \ref{t:extinction}} \textit{If the stochastic growth rate of the population in the absence of competition is negative (i.e. $ r<0$) the population densities of the $n$ patches $(X_1(t),\dots,X_n(t))$ go extinct exponentially fast with rates $ r<0$ with probability $1$ for any starting point $(X_1(0),\dots,X_n(0))= (x_1,\dots,x_n)\in \R_+^{n}$.}

In Appendix \ref{sec:+}, we prove Theorem \ref{t:survival} while Theorem \ref{t:extinction} is proven in Appendix \ref{sec:general-}.

\subsection{Degenerate noise}
We
 consider the evolution of the process $(\BX(t))_{t\geq 0}$ given by \eqref{e4.0} when Assumption \ref{a:nonsingular} does not hold. If the covariance matrix $\Sigma=\Gamma^T\Gamma$ coming for the Brownian motions $\BE(t)=(E_1(t),\dots, E_n(t))^T=\Gamma^\top\BB(t)$ is singular, the environmental noise driving our SDEs has a lower dimension than the dimension $n$ of the underlying state space.
 It becomes much more complex to
 prove that our process is Feller and irreducible. In order to verify the Feller property, we have to verify the so-called H{\"o}rmander condition,
  and
  to verify the irreducibility, we have to investigate the controllability of a related
  control system.

We are able to prove the following extinction and persistence results.
  \begin{thm}\label{t:ext_deg}
  Assume that $\tilde\BY(t)$ has a \textbf{unique} invariant probability measure $\nu^*$. Define $ r$ by \eqref{lambda}. Suppose that $ r<0$.
Then for any $i=1,\dots,n$ and any $\mathbf{x} = (x_1,\dots,x_n)\in \R_+^{n}$
\begin{equation}
\PP\left\{\lim_{t\to\infty}\frac{\ln {X}_i^{\mathbf{x}}(t)}{t}=  r\right\}=1.
\end{equation}
In particular, for any $i=1,\dots,n$ and any $\mathbf{x} = (x_1,\dots,x_n)\in \R_+^{n}$
\[
\PP\left\{\lim_{t\to\infty}X^{\mathbf{x}}_i(t)=0\right\}=1.
\]
\end{thm}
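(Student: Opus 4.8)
The plan is to observe that the extinction argument underlying Theorem~\ref{t:extinction} uses the nondegeneracy Assumption~\ref{a:nonsingular} only through one consequence — the unique ergodicity of the auxiliary diffusion $\tilde\BY(t)$ on the compact simplex $\Delta$ (supplied in the nondegenerate case by \cite[Proposition~3.1]{ERSS13}) — so under the present hypothesis the same argument goes through. I would carry it out in three steps. \emph{Step~1 (a~priori control of the total abundance $S(t)=\sum_iX_i(t)$).} Applying It\^o's formula to $S^{\bx}(t)^\theta$ for small $\theta\in(0,1)$, using the $S$-equation in \eqref{eq.bys} and Assumption~\ref{a:competition} — which forces the drift of $\ln S$ to lie below $-\gamma_b$ whenever $S\ge M_b$ — one gets $\tfrac{d}{dt}\E S^{\bx}(t)^\theta\le-\theta\gamma_b\,\E S^{\bx}(t)^\theta+C$, hence $\sup_{t\ge0}\E S^{\bx}(t)^\theta<\infty$; a strong law of large numbers applied to the stochastic integral in the same It\^o expansion then gives $\limsup_{t\to\infty}\tfrac1t\int_0^tS^{\bx}(s)^\theta\,ds<\infty$ a.s., and since $\1_{\{S>M\}}\le(S/M)^\theta$, the fraction of $[0,t]$ on which $S^{\bx}(s)>M$ has $\limsup$ at most $C^{*}M^{-\theta}$ a.s. Thus $S$ lies at large values only a vanishing proportion of time.

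\emph{Step~2 (ergodic average of $\BY$).} Comparing \eqref{eq.bys} with \eqref{eq.by}, the process $\BY(t)$ is a perturbation of $\tilde\BY(t)$ by the drift $(\diag(\BY)-\BY\BY^\top)\bigl(-\bb(S\BY)\bigr)$, of size at most $\max_i|b_i(S Y_i)|$; since each $b_i$ is continuous and vanishes at $0$, and by Step~1 $S(t)$ is small for all but a vanishing proportion of time, this perturbation is negligible in the time average. Combining this with the assumed unique ergodicity of $\tilde\BY$ on $\Delta$ — for instance, by showing that a.s.\ every weak-$\ast$ limit point of the occupation measures $\tfrac1t\int_0^t\delta_{\BY(s)}\,ds$ is an invariant probability measure of \eqref{eq.by}, hence equals $\nu^*$ — yields $\tfrac1t\int_0^tg(\BY(s))\,ds\to\int_\Delta g\,d\nu^*$ a.s.\ for all $g\in C(\Delta)$, in particular $\tfrac1t\int_0^t\bigl(\ba^\top\BY(s)-\tfrac12\BY(s)^\top\Sigma\BY(s)\bigr)\,ds\to r$ a.s.

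\emph{Step~3 (conclusion).} It\^o's formula for $\ln S(t)$ from \eqref{eq.bys} reads $\tfrac{\ln S(t)}{t}=\tfrac1t\int_0^t\bigl(\ba^\top\BY(s)-\tfrac12\BY(s)^\top\Sigma\BY(s)\bigr)\,ds-\tfrac1t\int_0^t\bb(S(s)\BY(s))^\top\BY(s)\,ds+\tfrac{M(t)}{t}+o(1)$, where $M(t)=\int_0^t\BY(s)^\top\Gamma^\top d\BB(s)$ satisfies $\langle M\rangle_t\le\|\Sigma\|\,t$, so $M(t)/t\to0$ a.s. The remaining point is that the competition integral does not spoil the estimate: using Step~1 and that the $b_i$ vanish at $0$ (so that near extinction the integrand is small, while the rare excursions of $S$ to large values occupy negligible time), one shows it contributes non-positively in the limit, whence with Step~2, $\limsup_{t\to\infty}\tfrac{\ln S(t)}{t}\le r<0$; therefore $S^{\bx}(t)\to0$ exponentially. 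Once $S^{\bx}(t)\to0$ the competition integral actually tends to $0$, so $\tfrac{\ln S(t)}{t}\to r$; and writing $\ln X_i=\ln S+\ln Y_i$, the term $\tfrac1t\ln Y_i(t)\to0$ as in the proof of Theorem~\ref{t:extinction} (near $S=0$ the $\BY$-dynamics reduce to \eqref{eq.by}, for which $Y_i$ cannot decay exponentially fast). Hence $\tfrac{\ln X_i^{\bx}(t)}{t}\to r$ a.s., and in particular $X_i^{\bx}(t)\to0$ a.s., for every $i$ and every $\bx\in\R^n_+$. (When all the $b_i$ are nonnegative one can bypass Steps~1--3 by the comparison $X_i(t)\le\mathcal X_i(t)$ with the linear system \eqref{e1.1c}, whose total abundance has Lyapunov exponent $r$.)

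I expect Step~2 to be the main obstacle. In the nondegenerate case the transfer of the ergodic theorem from $\tilde\BY$ to $\BY$ can use the strong Feller property and irreducibility of $\tilde\BY$ from \cite{ERSS13}; here only bare unique ergodicity is assumed, so the argument must be soft — run the occupation-measure/tightness scheme and use the a~priori bound of Step~1 to kill the density-dependent perturbation, with no quantitative mixing available. The other delicate point is the control of the competition integral in Step~3 across the \emph{intermediate} range of $S$: it is the moment bound of Step~1, not any pathwise bound on $\sup_{t}S(t)$ (which may fail), that carries this through, and it is here that the bulk of the technical work lies.
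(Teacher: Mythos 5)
Your overall strategy---rerun the $r<0$ argument and observe that nondegeneracy enters only through the unique ergodicity of $\tilde\BY$---matches the paper's intent exactly: the paper proves this theorem by noting that the proof in Appendix \ref{sec:general-} carries over verbatim once $\nu^*$ is assumed unique. But your execution has a genuine gap at the junction of Steps~1 and~2, and it is not the ``soft'' difficulty you flag but a logical one. Your Step~1 moment bound on $\E S^\theta$ controls only the \emph{upper} tail of $S$: it shows the fraction of time with $S>M$ is $O(M^{-\theta})$. It does not show that $S$ is \emph{small} most of the time, which is what you actually invoke in Step~2 (``by Step~1 $S(t)$ is small for all but a vanishing proportion of time''). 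At moderate values of $S$ the drift perturbation $-\bb(S\BY)$ of the $\BY$-equation is bounded but in no way small, so weak-$\ast$ limit points of the occupation measures of $\BY$ need not be invariant for \eqref{eq.by}, and the ergodic average in Step~2 need not converge to $r$. The argument is then circular: you need $S$ small in time average to transfer the ergodic theorem from $\tilde\BY$ to $\BY$, and you need that transfer to conclude $\limsup_{t\to\infty}\ln S(t)/t\le r<0$ and hence that $S$ is small. Your fallback comparison with \eqref{e1.1c} works only for nonnegative $b_i$, which Assumption \ref{a:competition} does not guarantee.

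The paper breaks this circle by a mechanism you would need to import. It introduces the competition-free system \eqref{e:X0} with growth rates $a_i+\theta$, uses the continuous dependence of $r$ on the coefficients (Proposition \ref{p:cts_dep}) to pick $\theta$ with $\check r<r/2<0$, and chooses $\delta$ with $-b_i(x)<\theta$ for $x<\delta$. A pathwise comparison traps $\BX$ below $\check\BX$ as long as $\check S<\delta$; since $\check r<0$ forces $\sup_{t}\ln\check S(t)<\infty$ a.s., for small initial abundance $\check S$ stays below $\delta$ forever with probability $>1-\eps$, giving $\limsup_{t\to\infty}\ln S(t)/t\le\check r<0$ on that event. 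This shows $(\BY,S)$ is not recurrent, hence transient, and together with Lemma \ref{lem2.1} yields $S(t)\to0$ a.s.\ from \emph{every} initial condition. Only after that does your occupation-measure argument become valid---the limit points of the occupation measures of the joint process are invariant measures supported on $\Delta\times\{0\}$, where the only candidate is $\nu^*\times\delta_0^*$---and the rest of your Step~3, including $\ln Y_i(t)/t\to0$ via the identity $\beta_i+\rho_i=0$, proceeds as in the paper. (One further point neither you nor the paper addresses explicitly: in the degenerate setting the perturbed proportion process $\check\BY$ must itself be uniquely ergodic for \eqref{clambda} and Proposition \ref{p:cts_dep} to apply.)
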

\begin{rmk}{\rm The extra assumption in this setting is that the Markov process describing the proportions of the populations of the patches evolving without competition, $\tilde\BY(t)$, has a unique invariant probability measure.
In fact, we conjecture that  $\tilde\BY(t)$ always has a unique invariant probability measure.
We were able to prove this conjecture when $n=2$ -- see Remark \ref{r:lambda_2d} for details. }
\end{rmk}
\begin{thm}\label{t:pers_deg}
Assume that $\tilde\BY(t)$ has a \textbf{unique} invariant probability measure $\nu^*$. Define $ r$ by \eqref{lambda}. Suppose that Assumption \ref{a:dispersion} holds and that $ r>0$.
Assume further that there is a sufficiently large $T>0$ such that the Markov chain $(\BY(kT),S(kT))_{k\in\N}$ it is irreducible and aperiodic, and
that every compact set in $\Delta^\circ\times(0,\infty)$ is petite for this Markov chain.

The process $\BX(t) = (X_1(t),\dots,X_n(t))_{t\geq 0}$ has a unique invariant probability measure $\pi$ on $\R^{n,\circ}_+$ that is absolutely continuous with respect to the Lebesgue measure and for any $q^*>0$,
\begin{equation}
\lim\limits_{t\to\infty} t^{q^*}\|P_\BX(t, \mathbf{x}, \cdot)-\pi(\cdot)\|_{\text{TV}}=0, \;\mathbf{x}\in\R^{n,\circ}_+,
\end{equation}
where $\|\cdot,\cdot\|_{\text{TV}}$ is the total variation norm and $P_\BX(t,\mathbf{x},\cdot)$ is the transition probability of $(\BX(t))_{t\geq 0}$. Moreover, for any initial value $\mathbf{x}\in\R^{n}_+\setminus\{\mathbf{0}\}$ and any $\pi$-integrable function $f$, we have
\begin{equation}
\PP\left\{\lim\limits_{T\to\infty}\dfrac1T\int_0^Tf\left(\BX^{\mathbf{x}}(t)\right)dt=\int_{\R_+^{n,\circ}}f(\mathbf{u})\pi(d\mathbf{u})\right\}=1.
\end{equation}
\end{thm}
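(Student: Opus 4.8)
The plan is to deduce Theorem~\ref{t:pers_deg} from the proof of Theorem~\ref{thm2.2} given in Appendix~\ref{sec:+}, by locating precisely the places where the non-degeneracy Assumption~\ref{a:nonsingular} is used there and checking that each of them is now supplied directly by the hypotheses of Theorem~\ref{t:pers_deg}. Since for $\bx\in\R^n_+\setminus\{\mathbf 0\}$ the vector $\BX(t)$ is, for each fixed $t>0$, in bijection with the pair $(\BY(t),S(t))\in\Delta^\circ\times(0,\infty)$ via $(\by,s)\mapsto s\by$, I would phrase everything in terms of the process $(\BY(t),S(t))$ on $\Delta^\circ\times(0,\infty)$ and then transport the conclusions back to $\BX$.

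First I would redo the Foster--Lyapunov step. The construction in Appendix~\ref{sec:+} uses only three analytic inputs: Assumption~\ref{a:competition}, which makes $S$ strongly mean-reverting for $S$ large; Assumption~\ref{a:dispersion}, which keeps $\BY(t)$ away from $\partial\Delta$; and the \emph{unique} ergodicity of $\tilde\BY$ with invariant measure $\nu^*$, which --- through the ergodic theorem for $\tilde\BY$ and an associated (now genuinely degenerate) Poisson-type equation on $\Delta$ --- lets one control $\tfrac1t\ln S(t)$ near the face $\{S=0\}$ by $ r>0$. None of these requires $\Sigma$ to be non-singular, and all are available under the present hypotheses. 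Hence, exactly as in Appendix~\ref{sec:+}, I would obtain for each $p\in(0,1)$ a norm-like function $V_p\colon\Delta^\circ\times(0,\infty)\to[1,\infty)$ blowing up as the state approaches $\partial\Delta$, $S=0$, or $S=\infty$, a compact set $K_p\subset\Delta^\circ\times(0,\infty)$, and constants $\gamma_p,C_p>0$ with a drift inequality of the form
\[
\mathcal{L}V_p\le -\gamma_p\,V_p^{\,1-p}+C_p\,\1_{K_p};
\]
letting $p\downarrow 0$ provides drift conditions corresponding to every polynomial rate.

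Next I would carry out the irreducibility/minorization step. In the non-degenerate proof this is exactly where Assumption~\ref{a:nonsingular} enters: combined with the support theorem it makes $\BX(t)$ strong Feller and topologically irreducible on $\R^{n,\circ}_+$, whence every compact subset of $\R^{n,\circ}_+$ is petite and the $T$-skeleton chain is $\psi$-irreducible and aperiodic. But these are precisely the properties \emph{assumed} in Theorem~\ref{t:pers_deg} for $(\BY(kT),S(kT))_{k\in\N}$. Thus the hypotheses of the subgeometric ergodic theory for Markov processes (Meyn--Tweedie; Tuominen--Tweedie; Douc--Fort--Guillin) are satisfied, and one concludes that $(\BY(t),S(t))$ is positive Harris recurrent, admits a unique invariant probability measure, and --- because the family $\{V_p\}_{p\in(0,1)}$ realizes every polynomial rate --- satisfies
\[
\lim_{t\to\infty}t^{q^*}\,\|P(t,z,\cdot)-\pi_{(\BY,S)}(\cdot)\|_{\mathrm{TV}}=0\qquad\text{for all }q^*>0\text{ and all }z\in\Delta^\circ\times(0,\infty).
\]
Transporting along $(\by,s)\mapsto s\by$ yields the unique invariant probability measure $\pi$ of $\BX$ on $\R^{n,\circ}_+$, the claimed total-variation convergence from interior starting points, and --- from positive Harris recurrence --- the strong law for $\pi$-integrable $f$ and interior $\bx$; for general $\bx\in\R^n_+\setminus\{\mathbf 0\}$ one uses that irreducibility of $D$ forces $\BX^\bx(t)\in\R^{n,\circ}_+$ for every $t>0$, restarts at a small positive time from an interior point, and invokes the Markov property. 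Absolute continuity of $\pi$ with respect to Lebesgue measure follows from absolute continuity of the kernels $P_\BX(t,\bx,\cdot)$ (a byproduct of the H\"ormander condition used to verify the petiteness hypothesis), since $\pi=\int P_\BX(1,\bx,\cdot)\,\pi(d\bx)$.

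I expect the main obstacle to be the first step --- in particular, verifying that the delicate part of the Appendix~\ref{sec:+} Lyapunov construction near $\{S=0\}$ goes through using only \emph{uniqueness} of $\nu^*$, rather than the stronger regularity (existence of a smooth density for $\nu^*$, the strong Feller property of $\tilde\BY$) that non-degeneracy would otherwise supply; this is where one must produce, degenerately, either a solution of the Poisson equation $\mathcal{L}_{\tilde\BY}\varphi=\ba^\top\by-\tfrac12\by^\top\Sigma\by- r$ on $\Delta$ or a suitable time-averaged surrogate of it. Once that is in hand, the remainder is a matter of quoting the standard continuous-time ergodic machinery, the petiteness and irreducibility/aperiodicity input having been granted by hypothesis.
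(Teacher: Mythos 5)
Your proposal is correct and follows essentially the same route as the paper: the paper's own proof of Theorem~\ref{t:pers_deg} is precisely the reduction you describe, namely observing that the non-degeneracy Assumption~\ref{a:nonsingular} enters the Appendix~\ref{sec:+} argument only through the irreducibility, aperiodicity and petiteness statements of Lemma~\ref{petite}, which are now granted by hypothesis, and then repeating the Lyapunov/skeleton-chain argument of Theorem~\ref{thm2.2} verbatim. The obstacle you flag near $\{S=0\}$ (that Lemma~\ref{lem2.5} uses the uniform exponential total-variation convergence of $\tilde\BY$ to $\nu^*$, which in the non-degenerate case comes from $\Delta$ being petite for $\tilde\BY$ rather than from mere uniqueness of $\nu^*$) is a real point that the paper leaves implicit, so your explicit attention to it is if anything more careful than the published argument.
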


\begin{rmk}{\rm We require as before that $\tilde\BY(t)$ has a unique invariant probability measure. Furthermore, we require that there exists some time $T>0$ such that if we observe the process $(\BY(t),S(t))$ at the fixed times $T, 2T,3T,\dots, kT,\dots$ it is irreducible (loosely speaking this means that the process can visit any state) and aperiodic (returns to a given state occur at irregular times).  }
\end{rmk}
\subsection{Case study: $n=2$}

Note that the two Theorems above have some extra assumptions. We exhibit how one can get these conditions explicitly as functions of the various parameters of the model. For the sake of a clean exposition we chose to fully treat the case when $n=2$ and $b_i(x)=b_ix,x\geq0,  i=1,2$ for some $b_1,b_2>0$ (each specific case would have to be studied separately as the computations change in each setting).
As a result, \eqref{e4.0} becomes
\begin{equation*}
\begin{cases}
dX_1(t)=\big(X_1(t)(a_1-b_1 X_1(t))-\alpha X_1(t)+\beta X_2(t)\big)dt+\sigma_1X_1(t)dB(t)  \\
dX_2(t)=\big(X_2(t)(a_2-b_2 X_2(t))+\alpha X_1(t)-\beta X_2(t)\big)dt+\sigma_2X_2(t)dB(t),
\end{cases}
\end{equation*}
where $\sigma_1$, $\sigma_2$ are non-zero constants and $(B(t))_{t\geq 0}$ is a one dimensional Brownian motion.
The Lyapunov exponent can now be expressed as (see Remark \ref{r:lambda_2d})
\begin{equation}\label{e:lambda_ours}
 r = a_2-\frac{\sigma_2^2}{2} + (a_1-a_2+\sigma_2^2)\int_0^1 y\rho_1^*(y)\,dy - \frac{(\sigma_1-\sigma_2)^2}{2}\int_0^1 y^2\rho_1^*(y)\,dy
\end{equation}
where $\rho_1^*$ is given in \eqref{e-rho*} later.

If $\sigma_1=\sigma_2=:\sigma$,  one has (see Remark \ref{r:lambda_2d})
\begin{equation}\label{e:lambda_2}
 r =
a_2-\dfrac{\sigma^2}2+(a_1-a_2+\sigma^2)y^\star.
\end{equation}
\begin{thm}\label{t:extinction_degenerate_2}
Define $ r$ by \eqref{e:lambda_ours} if $\sigma_1\ne\sigma_2$ and by \eqref{e:lambda_2} if $\sigma_1=\sigma_2=\sigma$. If $ r<0$ then for any $i=1,2$ and any $\mathbf{x} = (x_1,x_2)\in \R_+^{2}$
\begin{equation}
\PP\left\{\lim_{t\to\infty}\frac{\ln {X}_i^{\mathbf{x}}(t)}{t}=  r\right\}=1.
\end{equation}
\end{thm}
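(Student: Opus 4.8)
The strategy is to recognize Theorem~\ref{t:extinction_degenerate_2} as the specialization of Theorem~\ref{t:ext_deg} to the case $n=2$ with $b_i(x)=b_ix$. By Theorem~\ref{t:ext_deg}, only two ingredients are needed: (i) the proportion process $\tilde\BY(t)$ of \eqref{eq.by} has a \emph{unique} invariant probability measure $\nu^*$ on $\Delta$; and (ii) the abstract Lyapunov exponent $r$ of \eqref{lambda} coincides with \eqref{e:lambda_2} when $\sigma_1=\sigma_2$ and with \eqref{e:lambda_ours} when $\sigma_1\ne\sigma_2$. Granting (i)--(ii) together with the hypothesis $r<0$, Theorem~\ref{t:ext_deg} immediately gives $\PP\{\lim_{t\to\infty}t^{-1}\ln X_i^{\bx}(t)=r\}=1$ for $i=1,2$ and every $\bx\in\R^2_+$, which is the assertion. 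Thus the proof reduces to establishing (i) and (ii), i.e.\ to the one-dimensional analysis promised in Remark~\ref{r:lambda_2d}.

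For (i) I would collapse $\tilde\BY(t)$ to the scalar process $Y(t):=\tilde Y_1(t)\in[0,1]$, with $\tilde Y_2=1-Y$. Reading off the first coordinate of \eqref{eq.by} with $\Gamma^\top=(\sigma_1,\sigma_2)^\top$, a direct computation gives $\bigl(\diag(\tilde\BY)-\tilde\BY\tilde\BY^\top\bigr)\Gamma^\top=(\sigma_1-\sigma_2)\,Y(1-Y)\,(1,-1)^\top$, so that
\[
dY(t)=\mu(Y(t))\,dt+(\sigma_1-\sigma_2)\,Y(t)\bigl(1-Y(t)\bigr)\,dB(t),
\]
where $\mu$ is an explicit cubic polynomial on $[0,1]$ combining the dispersal contribution $\beta(1-Y)-\alpha Y$ with the growth term $(a_1-a_2)Y(1-Y)$ and the Itô correction coming from $-\Sigma\tilde\BY-\tilde\BY\tilde\BY^\top$ (the competition drops out because $b_i(0)=0$ on the face $S=0$). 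Since the dispersal matrix is irreducible, $\alpha,\beta>0$, so $\mu(0)=\beta>0$ and $\mu(1)=-\alpha<0$.

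If $\sigma_1=\sigma_2=:\sigma$, the martingale term vanishes identically and $Y(t)$ solves the autonomous ODE $\dot y=\mu(y)$ on $[0,1]$; a sign analysis shows $\mu$ has a unique globally attracting zero $y^\star\in(0,1)$, so $\tilde\BY(t)$ has the unique invariant measure $\delta_{(y^\star,1-y^\star)}$, and substituting this point mass into \eqref{lambda} yields exactly \eqref{e:lambda_2}. If $\sigma_1\ne\sigma_2$, then $(\sigma_1-\sigma_2)y(1-y)>0$ on $(0,1)$, so $Y(t)$ is a regular one-dimensional diffusion there; I would compute its scale function and speed measure and apply Feller's test. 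Near $y=0$ the drift tends to $\beta>0$ while the diffusion coefficient vanishes linearly, which forces $0$ to be an inaccessible (entrance) boundary, and symmetrically $1$ is inaccessible; moreover the speed density is integrable near both endpoints. Hence $Y(t)$ is positive recurrent on $(0,1)$ with a unique stationary density $\rho_1^*$, the normalized speed density \eqref{e-rho*}, which gives the unique invariant measure of $\tilde\BY(t)$; rewriting $\ba^\top\by-\tfrac12\by^\top\Sigma\by$ in the coordinate $y$ turns \eqref{lambda} into \eqref{e:lambda_ours}. In either case Theorem~\ref{t:ext_deg} now applies and finishes the proof.

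The main obstacle is the boundary classification when $\sigma_1\ne\sigma_2$: one must check carefully, using the precise behaviour of $\mu$ at $y=0$ and $y=1$ and Feller's criteria, that neither endpoint is attracting (so $Y$ stays in $(0,1)$ and no spurious boundary equilibria of $\tilde\BY$ arise) and that the speed measure is finite, so that the scalar diffusion is genuinely ergodic on $(0,1)$ with stationary density \eqref{e-rho*}. The remaining steps—the Itô reduction to the scalar SDE, locating $y^\star$ in the common-noise case, and matching the two explicit formulas for $r$ against \eqref{lambda}—are routine computations.
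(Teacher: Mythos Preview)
Your proposal is correct and follows essentially the same route as the paper: reduce to Theorem~\ref{t:ext_deg} by verifying that the one-dimensional process $\tilde Y_1(t)$ has a unique invariant law on $(0,1)$---the Dirac mass at the attracting equilibrium $y^\star$ when $\sigma_1=\sigma_2$, and the explicit speed-measure density $\rho_1^*$ when $\sigma_1\ne\sigma_2$---and then read off the claimed formulas for $r$ from \eqref{lambda}. The paper carries out exactly this reduction in Remark~\ref{r:lambda_2d} and then invokes Theorem~\ref{t:extinction_degenerate}; your added remark that the endpoints are entrance boundaries via Feller's test is the rigorous justification the paper leaves implicit.
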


\begin{thm}\label{t:survival_degenerate_2}
Suppose that $\sigma_1\neq \sigma_2$ or $\beta+(b_2/b_1)( a_1-a_2-\alpha+\beta)-\alpha (b_2/b_1)^2\ne 0$.
 Define $ r$ as in Theorem \ref{t:extinction_degenerate_2}. If $ r>0$ then the conclusion of Theorem \ref{t:pers_deg} holds.
\end{thm}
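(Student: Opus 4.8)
The plan is to deduce Theorem~\ref{t:survival_degenerate_2} from Theorem~\ref{t:pers_deg} by checking the latter's hypotheses in this two-patch, linearly competing case (here $b_i(x)=b_ix$, so all coefficients are polynomial and in particular smooth). Three of the hypotheses are essentially free: $r>0$ is assumed; Assumption~\ref{a:dispersion} holds because the $2\times2$ dispersal matrix with positive off-diagonal entries $\alpha,\beta>0$ is irreducible (otherwise the two patches decouple and one is back to a single patch); and the uniqueness of the invariant probability measure $\nu^*$ of $\tilde\BY(t)$ is exactly the content of Remark~\ref{r:lambda_2d} -- when $\sigma_1\neq\sigma_2$ the coordinate $\tilde Y_1$ is a non-degenerate scalar diffusion on $(0,1)$ whose scale and speed functions confine it to $(0,1)$ and produce the explicit invariant density $\rho_1^*$ of \eqref{e-rho*}, and when $\sigma_1=\sigma_2$ the equation for $\tilde Y_1$ loses its noise and becomes a scalar quadratic ODE with drift positive at $0$ and negative at $1$, hence with a unique globally attracting fixed point $y^\star\in(0,1)$ and $\nu^*=\delta_{y^\star}$. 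The remaining, and only substantive, task is to produce a $T>0$ for which the skeleton chain $(\BY(kT),S(kT))_{k\in\N}$ is irreducible and aperiodic and for which every compact subset of $\Delta^\circ\times(0,\infty)$ is petite.

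Because $\bx\mapsto(\bx/(x_1+x_2),\,x_1+x_2)$ is a diffeomorphism of $\R^{2,\circ}_+$ onto $\Delta^\circ\times(0,\infty)$ carrying the $\BX$-process to the $(\BY,S)$-process, it is equivalent to verify these properties for the skeleton of $\BX(t)$ on $\R^{2,\circ}_+$, where the computations are cleanest. Writing the system in Stratonovich form with drift $f$ and single diffusion vector field $g(\bx)=(\sigma_1x_1,\sigma_2x_2)$, a short computation gives $[f,g](\bx)=\big(\sigma_1b_1x_1^2+\beta(\sigma_1-\sigma_2)x_2,\ \sigma_2b_2x_2^2+\alpha(\sigma_2-\sigma_1)x_1\big)$, so $g$ and $[f,g]$ are linearly independent off a one-dimensional algebraic set $\mathcal N\subset\R^{2,\circ}_+$, and on $\mathcal N$ one passes to a higher bracket. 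When $\sigma_1\neq\sigma_2$ the corresponding (unconditional) bracket bookkeeping shows that the Lie algebra generated by $g$, $[f,g]$ and $[f,[f,g]]$ has full rank at every point of $\R^{2,\circ}_+$. When $\sigma_1=\sigma_2=\sigma$ one has $\mathcal N=\{b_1x_1=b_2x_2\}$ and $[f,g]=\sigma(b_1x_1^2,b_2x_2^2)$, and a computation of $[f,[f,g]]$ along $\mathcal N$ shows that $g$ and $[f,[f,g]]$ fail to span there precisely on the exceptional parameter surface $\beta+(b_2/b_1)(a_1-a_2-\alpha+\beta)-\alpha(b_2/b_1)^2=0$ -- so under the second alternative of the theorem H\"ormander's condition holds on all of $\R^{2,\circ}_+$. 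In either case the parabolic H\"ormander condition holds throughout $\R^{2,\circ}_+$, hence $P_\BX(t,\bx,\cdot)$ has a density $p_t(\bx,\by)$ jointly smooth on $\R^{2,\circ}_+\times\R^{2,\circ}_+$ and the skeleton chain is strong Feller.

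To obtain topological irreducibility I would use the Stroock--Varadhan support theorem together with the positivity of the hypoelliptic density: $\suppo P_\BX(T,\bx,\cdot)$ is the closure of the set of endpoints $\bz(T)$ of solutions of the control system $\dot\bz=f(\bz)+u\,g(\bz)$, $\bz(0)=\bx$, over smooth controls $u$. Large constant controls flow the state, arbitrarily fast, along the integral curves of $g$ in both directions, while intervals with $u\equiv0$ move it by the drift $f$, which is transverse to those curves away from $\mathcal N$ because of the competition terms; concatenating such pieces -- and using that, started in $\R^{2,\circ}_+$, the controlled (and the stochastic) trajectories neither explode nor reach the boundary -- one shows the reachable set from any point of $\R^{2,\circ}_+$ is all of $\R^{2,\circ}_+$. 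Combined with the H\"ormander condition at each endpoint this gives $p_T(\bx,\by)>0$ for all $\bx,\by\in\R^{2,\circ}_+$, so the skeleton chain is $\psi$-irreducible with $\psi$ equivalent to Lebesgue measure on $\R^{2,\circ}_+$ and aperiodic; being strong Feller it is a $T$-chain, and a $\psi$-irreducible $T$-chain has every compact set petite (Meyn--Tweedie). Transporting these statements back through the diffeomorphism supplies exactly the remaining hypotheses of Theorem~\ref{t:pers_deg}, whose conclusion is then precisely the assertion of Theorem~\ref{t:survival_degenerate_2}.

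The principal obstacle is the hypoellipticity step: one must locate the set $\mathcal N$ where $g$ and $[f,g]$ fail to span and then show that a higher bracket restores full rank there under each of the two alternatives, and the bracket computation along the degenerate line $\{b_1x_1=b_2x_2\}$ when $\sigma_1=\sigma_2$ is delicate -- it is precisely this computation that produces the algebraic condition appearing in the theorem (equivalently, it is the condition that the second-order bracket $[f,[f,g]]$ be transverse to $g$ on that line). A secondary difficulty is the controllability argument behind irreducibility: with only a single, essentially radial, control direction, steering to arbitrary targets in $\R^{2,\circ}_+$ requires a careful geometric interplay between the time-reversible motion along the integral curves of $g$ and the drift segments, kept under control by the a priori persistence and non-degeneracy bounds that confine trajectories to the open orthant.
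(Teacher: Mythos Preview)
Your overall strategy---verify H\"ormander's condition and a control-theoretic irreducibility statement, then invoke Theorem~\ref{t:pers_deg}---is exactly the paper's approach. Your first Lie bracket $[f,g](\bx)=(\sigma_1b_1x_1^2+\beta(\sigma_1-\sigma_2)x_2,\ \sigma_2b_2x_2^2+\alpha(\sigma_2-\sigma_1)x_1)$ is correct, and the degenerate locus $\{b_1x_1=b_2x_2\}$ when $\sigma_1=\sigma_2$ is the right one; the paper carries out the equivalent computation in the coordinates $(Z,X_2)=(X_1/X_2,X_2)$ (Proposition~\ref{p:hormander}), and since H\"ormander's condition is diffeomorphism-invariant the two routes must agree. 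Note, though, that you only \emph{assert} that a second bracket resolves the degeneracy exactly under the stated algebraic condition; this is the crux of the hypoellipticity step and needs to be written out.

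The genuine gap is in your controllability argument. You claim that by alternating motion along the integral curves of $g$ with drift segments one reaches \emph{all} of $\R^{2,\circ}_+$ from any starting point, hence $p_T(\bx,\by)>0$ everywhere. This is too strong. The integral curves of $g$ are precisely the level sets of $w:=x_1x_2^{-\sigma_1/\sigma_2}$, so the control moves you freely along these curves but \emph{not} across them; crossing is governed entirely by the drift, and in the paper's reduced system $\dot w=h(w,y)$ one may have $\sup_{y>0}h(w_0,y)<0$ for some $w_0$, in which case $w$ cannot be increased past $w_0$ and full controllability fails. The paper does not attempt to prove full controllability: instead it makes the substitution $w=zy^{-\sigma_1/\sigma_2+1}$ to decouple the control (equation~\eqref{e5.7}), identifies a (possibly proper) invariant control set $\tilde\C=\{w<c^*\}$ that every trajectory can reach, and---crucially---finds a \emph{resting point} $(w^*,y^*)\in\tilde\C$ with $h(w^*,y^*)=0$ at which a feedback control holds the system stationary for any prescribed time (Lemma~\ref{lem5.2}). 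This resting point is what lets you pass from all-time reachability to exact-time-$T$ reachability, which is what you actually need for the skeleton $(\BX(kT))_{k\in\N}$; your sketch does not address this time-$T$ issue at all. With the resting point in hand, the paper's Proposition~\ref{prop5.1} shows every compact set is petite without ever asserting that $p_T$ is everywhere positive.
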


\begin{rmk}
{\rm
Once again the parameter $ r$ tells us when the population goes extinct and when it persists.
To obtain the conclusion of Theorem \ref{t:pers_deg}  when $ r>0$, we need
$\sigma_1\neq \sigma_2$ or $\beta+(b_2/b_1)( a_1-a_2-\alpha+\beta)-\alpha (b_2/b_1)^2\ne 0.$
The condition $\sigma_1\neq \sigma_2$ tells us that the noise must at least differ through its variance. If $\sigma_1= \sigma_2$ then we require
\[
a_1+\beta \frac{b_1+b_2}{b_2} \neq a_2+\alpha \frac{b_1+b_2}{b_1}.
\]
The term $\beta \frac{b_1+b_2}{b_2}$ measures the dispersion rate of individuals from patch $2$ to patch $1$ averaged by the inverse relative competition strength of patch $2$. In particular, if $b_1=b_2$ we have that
\[
2(\beta-\alpha) \neq a_2 - a_1,
\]
that is twice the difference of the dispersal rates cannot equal the difference of the growth rates.
The dynamics of the system is very different if these conditions do not hold (see Section \ref{s:n=2_deg_noHorm} and Theorem \ref{t:survival_degenerate_1d}).}
\end{rmk}

\begin{thm}\label{t:survival_degenerate_1d}
Suppose that $\sigma_1= \sigma_2=\sigma, b_1=b_2$ and $2(\beta-\alpha) = a_2 - a_1$. In this setting one can show that the stochastic growth rate is given by $ r=a_1-\alpha+\beta-\frac{\sigma^2}2$. Assume that $(X_1(0),X_2(0))=\bx=(x_1,x_2)\in\R_+^{2,\circ}$ and let $U(t)$ be the solution to
\[
dU(t)=U(t)(a_1-\alpha+\beta-b U(t))\,dt+\sigma U(t)dB(t), U(0)=x_2.
\]
Then we get the following results
\begin{itemize}
\item [1)] If $x_1=x_2$ then $\PP(X_1^\bx(t)= X_2^\bx(t)=U(t), t\geq 0)=1.$
\item [2)]If $x_1\neq x_2$ then $\PP(X^\bx_1(t)\neq  X^\bx_2(t),  t\geq 0)=1.$
\item [3)]
If $ r<0$ then $X_1(t)$ and $X_2(t)$ converges to $0$ exponentially fast.
If $ r>0$ then $$\PP\left\{\lim_{t\to\infty}\dfrac{X^\bx_1(t)}{U^\bx(t)}=\lim_{t\to\infty}\dfrac{X^\bx_2(t)}{U^\bx(t)}=1\right\}=1.$$
Thus, both $X_1(t)$ and $X_2(t)$ converge to a unique invariant probability measure $\rho$ on $(0,\infty)$,
which is the invariant probability measure of $U(t)$. The invariant probability measure of $ (X_1(t),X_2(t))_{t\geq 0}$ is concentrated on the one-dimensional manifold
 $\{\bx=(x_1,x_2)\in\R^{2,\circ}_+: x_1=x_2\}$.
\end{itemize}

\end{thm}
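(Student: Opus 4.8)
The plan is to exploit the hypothesis $2(\beta-\alpha)=a_2-a_1$, i.e. $a_1-2\alpha=a_2-2\beta=:c$, which simultaneously makes $\{x_1=x_2\}$ an invariant manifold and makes the difference $D:=X_1-X_2$ obey a closed linear equation. Applying It\^o's formula to the two‑patch system and writing $S:=X_1+X_2$, the terms recombine (using $a_1-2\alpha=a_2-2\beta$) into
\[
dD(t)=D(t)\big(c-bS(t)\big)\,dt+\sigma D(t)\,dB(t),
\]
so $D(t)=D(0)\exp\!\big(\int_0^t(c-bS(u)-\tfrac{\sigma^2}{2})\,du+\sigma B(t)\big)$. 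If $x_1=x_2$ then $D\equiv 0$, hence $X_1\equiv X_2=:Z$, and substituting $X_1=X_2=Z$ into either equation gives $dZ=Z(a_1-\alpha+\beta-bZ)\,dt+\sigma Z\,dB$ with $Z(0)=x_2$ (the two substitutions agree precisely because $a_1-\alpha+\beta=a_2+\alpha-\beta$), so by uniqueness $Z=U$; this proves (1). If $x_1\ne x_2$, the exponential factor is a.s. strictly positive and finite for every $t$, so $D(t)\ne 0$ for all $t$ a.s., which is (2).

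Next I would identify $r$. When $\sigma_1=\sigma_2=\sigma$ the diffusion coefficient $(\diag(\tilde\BY)-\tilde\BY\tilde\BY^\top)\Gamma^\top$ in \eqref{eq.by} vanishes identically on $\Delta$ (the rows of $\diag(\tilde\BY)-\tilde\BY\tilde\BY^\top$ sum to zero while $\Gamma^\top d\BB=(\sigma,\sigma)^\top dB$), so $\tilde\BY(t)$ is the deterministic flow of $\dot y=-\alpha y+\beta(1-y)+(a_1-a_2)y(1-y)$ on $[0,1]$, with $y=\tilde Y_1$. Under $a_1-a_2=2(\alpha-\beta)$ this quadratic has $y=1/2$ as its only zero in $(0,1)$ (the second root, $\beta/(\beta-\alpha)$, lies outside $[0,1]$ when $\alpha\ne\beta$, and the field is linear when $\alpha=\beta$), and the vector field points inward at the endpoints, so $1/2$ is globally attracting and $\nu^*=\delta_{(1/2,1/2)}$. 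Plugging into \eqref{lambda} with $\Sigma=\sigma^2\1\1^\top$ gives $r=\tfrac{a_1+a_2}{2}-\tfrac{\sigma^2}{2}=a_1-\alpha+\beta-\tfrac{\sigma^2}{2}$.

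For the dichotomy in (3): when $r<0$ the statement $\ln X_i(t)/t\to r$ (hence $X_i(t)\to 0$ exponentially) follows directly from Theorem \ref{t:extinction_degenerate_2}, whose hypotheses hold by the previous paragraph. When $r>0$ I would pass to the ratios $R_i(t):=X_i(t)/U(t)$, well defined since $U(t)>0$ for all $t$. Because $U$ and each $X_i$ are driven by the same noise $\sigma(\,\cdot\,)dB$, It\^o's formula shows the martingale parts cancel and $(R_1,R_2)$ solves the random ODE
\[
\dot R_1=-bU\,R_1(R_1-1)+\beta(R_2-R_1),\qquad \dot R_2=-bU\,R_2(R_2-1)+\alpha(R_1-R_2),
\]
with $R_2(0)=1$. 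I claim $R_1(t),R_2(t)\to 1$ a.s. Since $R_1-R_2$ solves a scalar linear ODE it keeps the sign of $R_1(0)-R_2(0)$, so $M:=\max(R_1,R_2)$ and $m:=\min(R_1,R_2)$ are each always the same coordinate. A short planar analysis gives: $\{M\le 1\}$ and $\{m\ge 1\}$ are forward invariant; $\dot M<0$ wherever $M>1$; and $\dot m>0$ wherever $m<1$. If $M(t)\downarrow M_\infty>1$ then $\dot M\le -bU\,M_\infty(M_\infty-1)$, so $M(t)\le M(0)-bM_\infty(M_\infty-1)\int_0^t U(s)\,ds\to-\infty$ because $U$ is a positive ergodic process and hence $\int_0^t U\to\infty$ a.s.; this contradiction forces $M_\infty=1$, and symmetrically $m_\infty=1$ (here $m$ is monotone where $m<1$, hence stays bounded away from $0$, and the same argument applies). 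Running through the three positions of $R_1(0)=x_1/x_2$ relative to $1$ then yields $R_i(t)\to 1$, i.e. $X_i(t)/U(t)\to 1$ a.s. Finally, for $r>0$ the scalar SDE for $U$ has a unique invariant probability measure $\rho$ on $(0,\infty)$ with $U(t)\Rightarrow\rho$, so Slutsky's theorem gives $(X_1(t),X_2(t))=(R_1(t)U(t),R_2(t)U(t))\Rightarrow(W,W)$ with $W\sim\rho$; this weak limit is an invariant measure, it is the unique one on $\R^{2,\circ}_+$ (any invariant $\pi$ equals $\lim_t\int P_\BX(t,\bx,\cdot)\,\pi(d\bx)$, which is this limit), it is supported on $\{x_1=x_2\}$, and each marginal is $\rho$.

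The main obstacle I anticipate is the $r>0$ part: making the phase‑plane analysis of the random ODE for $(R_1,R_2)$ fully rigorous across all three regions — in particular ruling out an orbit that remains forever in the mixed region $\{m<1<M\}$ — and pairing it cleanly with the ergodicity input $\int_0^t U(s)\,ds\to\infty$ to exclude limits other than $1$. The remaining ingredients — the reduction via $D$, the (now deterministic) identification of $\nu^*$ and $r$, and the appeal to Theorem \ref{t:extinction_degenerate_2} in the extinction regime — are routine.
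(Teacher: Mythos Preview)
Your proof is correct but takes a genuinely different route from the paper's. For parts (1)--(2) the paper works with the ratio $Z=X_1/X_2$ and derives $d(Z-1)=-(Z-1)\big(ZX_2+\alpha Z+\beta\big)\,dt$, whereas you use the difference $D=X_1-X_2$ and obtain a linear SDE for $D$; both decompositions exploit the same parameter identity and are equally clean. For the persistence half of (3), the paper is more explicit: it writes $X_2(t)=\phi(t)\big/\big(x_2^{-1}+b\int_0^t\phi(s)\,ds\big)$ with $\phi(t)=\exp\big(rt+\alpha\int_0^t(Z(s)-1)\,ds+\sigma B(t)\big)$, compares this to the analogous formula for $U(t)$, and passes to the limit via L'H\^opital together with the law of the iterated logarithm; this yields, as a byproduct, the exponential rate at which $Z(t)\to 1$. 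Your approach via the random ODE for $(R_1,R_2)=(X_1/U,X_2/U)$ is more qualitative but arguably more robust, since it avoids explicit solution formulas entirely and only uses the single ergodic input $\int_0^t U(s)\,ds\to\infty$.

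Regarding your anticipated obstacle: the mixed region $\{m<1<M\}$ is in fact never entered. Since $R_2(0)=1$, in the case $x_1>x_2$ you have $\dot R_2(0)=\alpha(R_1(0)-1)>0$, so $R_2$ immediately moves above $1$; the sign preservation $R_1>R_2$ then forces $R_1>R_2>1$ for all $t>0$ (if $R_2$ ever returned to $1$ one would have $\dot R_2=\alpha(R_1-1)>0$ there, a contradiction), and your monotone argument for $M=R_1$ together with the squeeze $1<R_2<R_1$ finishes. The case $x_1<x_2$ is symmetric with $R_2$ immediately dropping below $1$ and staying there. So the phase-plane analysis you outline is already complete once you note this initial-condition constraint.
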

The proof of Theorem \ref{t:survival_degenerate_1d} is presented in Section \ref{s:n=2_deg_noHorm}.

\subsection{Robust persistence and extinction}
The model we work with is
an approximation of the real biological models. As a result, it is relevant to see if `close models' behave similarly to ours. This reduces to studying the robustness of our system. Consider the process
\begin{equation}\label{e:hatX}
d\hat X_i=\hat X_i\left(\hat a_i-\hat b_i(X_i)\right)dt +\hat D_{ij}(\hat\BX)\hat X_idt+\hat X_i\hat \Gamma(\hat\BX) d\BB(t)
\end{equation}
where $\hat\bb(\cdot), \hat D(\cdot),\hat\Gamma(\cdot)$ are locally Lipschitz functions
and
$\hat D_{ij}(\bx)\geq0$ for all $\bx\in\R^n_+, i\ne j$ and $\hat D_{ii}(\bx)=-\sum_{j\ne i} D_{ij}(\bx).$
If there exists $\theta>0$ such that
\begin{equation}\label{robust}
\sup\limits_{\bx\in\R^{n,\circ}_+}\left\{\|\ba-\hat \ba\|, \|\bb(\bx)-\hat\bb(\bx)\|, \|D-\hat D(\bx)\|, \|\Gamma-\hat\Gamma(\bx)\|\right\}<\theta,
\end{equation}
then we call $\hat \BX$ a \textit{$\theta$-perturbation} of $\BX$.

\begin{thm}\label{probust2}
Suppose that the dynamics of $(\BX(t))_{t\geq0}$ satisfy the assumptions of Theorem \ref{t:survival}. Then there exists $\theta>0$ such that any $\theta$-perturbation $(\hat\BX(t))_{t\geq0}$  of $(\BX(t))_{t\geq0}$ is persistent.
Moreover, the process $(\hat\BX(t))_{t\geq0}$ has a unique invariant probability measure $\hat\pi$ on $\R^{n,\circ}_+$ that is absolutely continuous with respect to the Lebesgue measure and for any $q^*>0$
\begin{equation*}
\lim\limits_{t\to\infty} t^{q^*}\|P_{\hat\BX}(t, \mathbf{x}, \cdot)-\hat\pi(\cdot)\|_{\text{TV}}=0, \;\mathbf{x}\in\R^{n,\circ}_+,
\end{equation*}
where $P_{\hat\BX}(t, \mathbf{x}, \cdot)$ is the transition probability of $(\hat\BX(t))_{t\geq0}$.
\end{thm}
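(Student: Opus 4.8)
The plan is to reduce robustness to a uniform version of the Lyapunov/Foster-type argument that underlies Theorem \ref{t:survival} (equivalently Theorem \ref{thm2.2} in Appendix \ref{sec:+}), showing that the quantitative estimates used there survive a small perturbation of all the coefficients. Concretely, the proof of persistence in the $r>0$ case rests on (i) the auxiliary process $\tilde\BY(t)$ on the simplex $\Delta$ having a unique invariant measure $\nu^*$ with $r=\int_\Delta(\ba^\top\by-\tfrac12\by^\top\Sigma\by)\,\nu^*(d\by)>0$, and (ii) a Lyapunov function of the form $V_\delta(\bx)=\big(\sum_i c_i x_i\big)^{-\delta}$ (or a closely related power of a weighted sum, with weights $c_i$ built from the dominant left eigenvector structure of $D$) for which $\mathcal L V_\delta \le -\kappa V_\delta$ outside a compact subset of $\R^{n,\circ}_+$, for $\delta,\kappa>0$ small. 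Step one is therefore to isolate, from the Appendix \ref{sec:+} argument, the finite list of strict inequalities that have to hold: $r>0$, Assumption \ref{a:competition} with constants $M_b,\gamma_b$, irreducibility of $D$, and nonsingularity of $\Sigma$.

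Step two is a continuity statement for $r$ in the coefficients. Since $\hat D(\bx)\to D$ and $\hat\Gamma(\bx)\to\Gamma$ uniformly as $\theta\to 0$, and the associated simplex process depends continuously (in the appropriate weak-convergence-of-invariant-measures sense) on $(D,\Gamma,\ba)$ — this is exactly the content of the continuity-of-$r$ results established in Appendix \ref{s:robust} — there is $\theta_1>0$ such that every $\theta_1$-perturbation still has its "growth rate'' (computed with the frozen-at-infinity or appropriately averaged coefficients) bounded below by $r/2>0$. One must be slightly careful because in \eqref{e:hatX} the perturbed dispersal and noise coefficients are \emph{state dependent}; the right object to control is not a single number but a uniform lower bound on $\mathcal L^{\hat{}} V_\delta / V_\delta$ in a neighbourhood of the boundary $\partial\R^n_+$, where the $\hat b_i$ terms are negligible (they vanish at $0$) and only $\hat\ba,\hat D(\cdot),\hat\Gamma(\cdot)$ matter; near the boundary these are within $\theta$ of the constant data, so the same $V_\delta$ works with a possibly smaller $\kappa$.

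Step three combines the two ingredients. Fix the Lyapunov function $V_\delta$ from the unperturbed problem. For $\bx$ near $\partial\R^n_+$ use the perturbed growth-rate bound from Step two to get $\hat{\mathcal L}V_\delta\le -\tfrac{\kappa}{2}V_\delta$; for $\bx$ with $\sum_i x_i$ large use Assumption \ref{a:competition}, which by \eqref{e:b} is stable under a $\theta_2$-perturbation of $\ba$ and $\bb$ (replace $\gamma_b$ by $\gamma_b-2\theta_2>0$), to get the standard drift-toward-the-origin estimate and hence $\hat{\mathcal L}V_\delta'\le -c$ for a second Lyapunov function $V_\delta'$ that grows at infinity; patch the two via a partition-of-unity/maximum argument to obtain a single norm-like Lyapunov function with $\hat{\mathcal L}\hat V\le -c_1\hat V + c_2\1_K$ on a compact $K\subset\R^{n,\circ}_+$. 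Then verify that $\hat\BX$ is non-explosive, that $K$ is petite for the perturbed process — here we need the perturbed diffusion to be non-degenerate and to satisfy the Hörmander/strong-Feller and irreducibility conditions, which hold because $\hat\Gamma(\bx)\hat\Gamma(\bx)^\top$ is within $\theta$ of the nonsingular $\Sigma$, hence uniformly elliptic for $\theta$ small, and $\hat D(\bx)$ stays irreducible — and invoke the Meyn–Tweedie polynomial-ergodicity machinery exactly as in Appendix \ref{sec:+} to conclude existence and uniqueness of $\hat\pi$ on $\R^{n,\circ}_+$, absolute continuity, the $t^{q^*}$ total-variation decay for every $q^*>0$, and the strong law. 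Set $\theta=\min(\theta_1,\theta_2,\theta_{\mathrm{ellip}})$.

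The main obstacle I expect is Step two: controlling the effect of the \emph{state-dependent} perturbations $\hat D(\cdot)$ and $\hat\Gamma(\cdot)$ on the quantity $r$. For the unperturbed model $r$ is an average against the invariant measure $\nu^*$ of an autonomous simplex diffusion; once $D$ and $\Gamma$ vary with $\bx$ this decoupling breaks, so one cannot literally "compute $\hat r$'' and compare. The fix is to avoid $\hat r$ entirely and argue purely at the level of Lyapunov inequalities near the boundary, where the offending terms are small because the competition terms vanish there and the relevant coefficients are uniformly close to constants; making this localization quantitative — i.e. choosing the compact set $K$ and the exponent $\delta$ uniformly over the whole $\theta$-ball of perturbations — is the technical heart of the argument, and it is what upgrades the continuity-of-$r$ statement of Appendix \ref{s:robust} into a genuine robust-persistence theorem.
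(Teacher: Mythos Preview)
Your proposal diverges from the paper's approach at the level of the basic Lyapunov mechanism, and the divergence introduces a genuine gap. You posit a continuous-time generator inequality $\hat{\mathcal L}V_\delta\le -\kappa V_\delta$ near $\partial\R^n_+$ for $V_\delta(\bx)=(\sum_i c_i x_i)^{-\delta}$. But computing the generator of $S^{-\delta}$ (or any weighted variant) gives, near the boundary where the $b_i$ terms vanish,
\[
\mathcal L\,S^{-\delta}\approx-\delta S^{-\delta}\Big(\ba^\top\by-\tfrac{1+\delta}{2}\,\by^\top\Sigma\by\Big),
\]
which depends on $\by\in\Delta$. The quantity $\ba^\top\by-\tfrac12\by^\top\Sigma\by$ equals $r>0$ only \emph{on average} against $\nu^*$; pointwise it may well be negative on parts of $\Delta$. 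So no choice of constant weights $c_i$ yields a uniform negative sign for $\mathcal L V_\delta/V_\delta$ near the boundary, and your Step three cannot go through as written. (A generator-level fix would require a Lyapunov function with genuine $\by$-dependence, e.g.\ of the form $S^{-\delta}e^{\delta U(\by)}$ with $U$ solving a Poisson equation for $\tilde\BY$; you do not propose this.)

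The paper's proof avoids this obstruction by working with the discrete-time skeleton $(\hat\BY(kT^*),\hat S(kT^*))_{k\in\N}$ and the Lyapunov function $V_q(s)=s+1+[\ln s]_-^{1+q}$ on the $S$-coordinate alone. The $\by$-dependence is eliminated by \emph{time-averaging}: one first fixes $T^*$ large enough that, via the ergodicity of $\tilde\BY$ (Lemma \ref{lem2.5}), $\PP\{\ln S^{\by,s}(T^*)\ge\ln s+\tfrac{3r}{4}T^*\}\ge 1-3\eps$ for small $s$. Robustness is then obtained not by comparing generators but by a trajectory-closeness estimate: for $\theta$ small one shows $\PP\{\|(\hat\BY^{\by,s},\hat S^{\by,s})-(\BY^{\by,s},S^{\by,s})\|\le\eps$ on $[0,T^*]\}>1-\eps$, which transfers the drift bound and the moment estimates \eqref{lm2.1a}, \eqref{e2.21}, \eqref{e2.1} to the perturbed process with uniform constants. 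The state-dependence of $\hat D(\cdot),\hat\Gamma(\cdot)$ is harmless here because only their values along the bounded trajectory on $[0,T^*]$ enter, and those are uniformly within $\theta$ of $D,\Gamma$. Nondegeneracy and irreducibility (hence the petite-set property for compacts) follow, as you note, from $\hat\Sigma(\bx)$ being uniformly close to the nonsingular $\Sigma$. With these ingredients the discrete-time drift condition $\E V_q(\hat S^{\by,s}(T^*))\le V_q(s)-h_qV_q(s)^{1/(1+q)}+H_q\1_K$ holds and Theorem \ref{polyrate} finishes the proof.
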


\textbf{Biological interpretation of Theorem \ref{probust2}} \textit{As long as the perturbation of our model is small, persistence does not change to extinction. Our model, even though it is only an approximation of reality, can provide relevant information regarding biological systems. Small enough changes in the growth rates, the competition rates, the dispersion matrix and the covariance matrix leave a persistent system unchanged.
}

\section{Theoretical and Numerical Examples}
This subsection is devoted to some theoretical and numerical examples. We choose the dimension to be $n=2$, so that we can compute the stochastic growth rate explicitly.

\begin{rmk}\label{r:lambda_2d}
{\rm If
an explicit expression for $ r$ is desirable, one needs to determine the first and second moments for the invariant probability measure $\nu^*$. One can show that $\rho^*$, the density of $\nu^*$ with respect to Lebesgue measure, satisfies
\begin{equation}\label{e:FP}
-\sum_{i}\frac{\partial}{\partial y_i}[ \mu_i(\by)\rho^*(\by)]+\frac{1}{2}\sum_{i,j}\frac{\partial^2}{\partial y_i \partial y_j}[ v_{ij}(\by)\rho^*(\by)]=0, ~\by\in\Delta,
\end{equation}
where $\mu_i(\by)$ and $v_{i,j}(\by)$ are the entries of
\begin{equation*}
\begin{split}
\mu(\by) &= D^\top\by + \left(\diag(\by)-\by\by^\top\right)\left(\ba-\Sigma \by\right),\\
v(\by) &= \left(\diag(\by)-\by\by^\top(t)\right)\Gamma^\top\Gamma \left(\diag(\by)-\by\by^\top(t)\right),
\end{split}
\end{equation*}
and $\rho^*$ is constrained by $\int_\Delta \rho^*(\by)d\by=1$ with appropriate boundary conditions. The boundary conditions are usually found by characterizing the domain of the infinitesimal generator of the Feller diffusion process $\tilde \BY(t)$, which
is usually a very difficult problem.

However, following \cite{ERSS13},
 in the case of two patches ($n=2$) and non-degenerate noise the problem is significantly easier.
Let  $\Sigma=\diag(\sigma_1^2,\sigma_2^2)$.  The system becomes
\begin{equation}\label{n=2-nonde}
\begin{cases}
dX_1(t)=\big(X_1(t)(a_1-b X_1(t))-\alpha X_1(t)+\beta X_2(t)\big)dt+\sigma_1 X_1(t)dB_1(t)  \\
dX_2(t)=\big(X_2(t)(a_2-b X_2(t))+\alpha X_1(t)-\beta tX_2(t)\big)dt+\sigma_2 X_2(t)dB_2(t).
\end{cases}
\end{equation}

It is easy to find the density $\rho_1^*$ of $\tilde Y_1(\infty)$ explicitly (by solving \eqref{e:FP}) and noting that $0,1$ are both entrance boundaries for the diffusion $\tilde Y_1(t)$).
Then
\[
\rho_1^*(x) = Cx^{\beta-\alpha_1}(1-x)^{-\beta-\alpha_2}\exp\left(-\frac{2}{\sigma_1^2+\sigma_2^2}\left(\frac{\beta}{x}+\frac{\alpha}{1-x}\right)\right), \,x\in (0,1)
\]
where $C>0$ is a normalization constant and
\begin{equation*}
\begin{split}
\alpha_i&:= \frac{2\sigma_i^2}{\sigma_1^2+\sigma_2^2}, \,i=1,2\\
\beta&:= \frac{2}{\sigma_1^2+\sigma_2^2}(a_1-a_2+\beta-\alpha).
\end{split}
\end{equation*}
One can then get the following explicit expression for the Lyapunov exponent
\begin{equation}\label{e:lambda_evans}
 r = a_2-\frac{\sigma_2^2}{2} + (a_1-a_2+\sigma_2^2)\int_0^1 y\rho_1^*(y)\,dy - \frac{\sigma_1^2+\sigma_2^2}{2}\int_0^1 y^2\rho_1^*(y)\,dy.
\end{equation}

Next, consider the degenerate case
\begin{equation}\label{e:SDE_twopatch}
\begin{cases}
dX_1(t)=\big(X_1(t)(a_1-b_1 X_1(t))-\alpha X_1(t)+\beta X_2(t)\big)dt+\sigma_1X_1(t)dB(t)  \\
dX_2(t)=\big(X_2(t)(a_2-b_2 X_2(t))+\alpha X_1(t)-\beta X_2(t)\big)dt+\sigma_2X_2(t)dB(t),
\end{cases}
\end{equation}
where $\sigma_1$, $\sigma_2$ are non-zero constants and $(B(t))_{t\geq 0}$ is a one dimensional Brownian motion.
Since $\tilde Y_1(t)+\tilde Y_2(t)=1$,
to find the invariant probability measure of $\tilde\BY(t)$,
we only need to find the invariant probability measure of $\tilde Y_1(t)$.

If $\sigma_2\ne\sigma_2$ we can find the invariant density $\rho_1^*$ of $\tilde Y_1(\infty)$ explicitly (by solving  \eqref{e:FP}.
Then
\begin{equation}\label{e-rho*}
\rho_1^*(x) = Cx^{\hat \beta-\hat \alpha_1}(1-x)^{-\hat\beta-\hat \alpha_2}\exp\left(-\frac{2}{(\sigma_1-\sigma_2)^2}\left(\frac{\beta}{x}+\frac{\alpha}{1-x}\right)\right), \,x\in (0,1)
\end{equation}
where $C>0$ is a normalization constant and
\begin{equation*}
\begin{split}
\hat \alpha_1&:= \frac{-2\sigma_1}{(\sigma_1-\sigma_2)},\quad\hat \alpha_2:= \frac{2\sigma_2}{(\sigma_1-\sigma_2)}, \\
\hat \beta&:= \frac{2}{(\sigma_1-\sigma_2)^2}(a_1-a_2+\beta-\alpha).
\end{split}
\end{equation*}
The Lyapunov exponent can now be expressed as
\begin{equation*}
 r = a_2-\frac{\sigma_2^2}{2} + (a_1-a_2+\sigma_2^2)\int_0^1 y\rho_1^*(y)\,dy - \frac{(\sigma_1-\sigma_2)^2}{2}\int_0^1 y^2\rho_1^*(y)\,dy.
\end{equation*}
We note that the structure of the stochastic growth rate $ r$ for non-degenerate noise \eqref{e:lambda_evans} and for degenerate noise \eqref{e:lambda_ours} with $\sigma_1\neq \sigma_2$  is the same. The only difference is that one needs to make the substitution $\sigma_1^2+\sigma_2^2\mapsto (\sigma_1-\sigma_2)^2$ and the changes in $\hat\alpha_i$.

If $\sigma_1=\sigma_2=:\sigma$ the system \eqref{eq.by} for $\tilde \BY(t)=(\tilde Y_1(t), \tilde Y_2(t))$
can be written as
\begin{equation}
\begin{cases}
d\tilde Y_1(t)=&\Big(\tilde Y_1(t)(a_1-a_1\tilde Y_1(t)-a_2\tilde Y_2(t))-\alpha \tilde Y_1(t)+\beta \tilde Y_2(t)\Big)dt\\
&+\sigma^2\tilde Y_1(t)\Big[(\tilde Y_1(t)+\tilde Y_2(t))^2-(\tilde Y_1(t)+\tilde Y_2(t))^2\Big]dt
 \\
d\tilde Y_1(t)=&\Big(\tilde Y_2(t)(a_2-a_1\tilde Y_1(t)-a_2\tilde Y_2(t))-\beta \tilde Y_2(t)+\alpha \tilde Y_1(t)\Big)dt\\
&+\sigma^2\tilde Y_2(t)\Big[(\tilde Y_1(t)+\tilde Y_2(t))^2-(\tilde Y_1(t)+\tilde Y_2(t))^2\Big]dt.
\end{cases}
\end{equation}
 Using the fact that $\tilde Y_1(t)+\tilde Y_2(t)=1$ this reduces to
\begin{equation}\label{e3-ex4}
d\tilde Y_1(t)=\Big((a_1-a_2)[1-\tilde Y_1(t)]\tilde Y_1(t)+\beta-(\alpha+\beta)\tilde Y_1(t)]\Big)dt.
\end{equation}
 The unique equilibrium of \ref{e3-ex4} in [0,1]
 is the root $y^\star$ in [0,1] of
 $(a_1-a_2)(1-y)y+\beta-(\alpha+\beta)y=0.$
Hence, the unique invariant probability measure of $\tilde\BY(t)$
in this case is the Dirac measure concentrated in $(y^\star,1-y^\star)$.
Thus
\begin{equation*}
 r =
a_2-\dfrac{\sigma^2}2+(a_1-a_2+\sigma^2)y^\star.
\end{equation*}
}
\end{rmk}

\subsection{The degenerate case $\sigma_1=\sigma_2, \alpha=\beta$}\label{s:n=2_deg}

Consider the following system,
where $\alpha, \sigma,a_i, b_i, i=1,2$ are positive constants.
\begin{equation}
\begin{cases}
dX_1(t)=\big(X_1(t)(a_1-b_1 X_1(t))-\alpha X_1(t)+\alpha X_2(t)\big)dt+\sigma X_1(t)dB(t)  \\
dX_2(t)=\big(X_2(t)(a_2-b_2 X_2(t))+\alpha X_1(t)-\alpha X_2(t)\big)dt+\sigma X_2(t)dB(t).
\end{cases}
\end{equation}
Suppose that $a_1\ne a_2$ or that $b_1\ne b_2$.
 This system is degenerate since
 both equations are driven by a single Brownian motion.
 In this case, the unique equilibrium of \eqref{e3-ex4} in [0,1]
 is the root $y^\star$ in [0,1] of
 $(a_1-a_2)(1-y)y+\alpha(1-2y)=0.$
 Solving this quadratic equation, we have
 $y^\star=\dfrac{a_1-a_2-2\alpha+\sqrt{(a_1-a_2)^2+4\alpha^2}}{2(a_1-a_2)}$ if $a_1\neq a_2$ and $y^\star=\frac{1}{2}$ if $a_1= a_2$.

 It can be proved easily that
 this equilibrium is asymptotically stable
 and that
 $\lim_{t\to\infty}\tilde Y_1(t)=y^\star$.
 Thus, if $a_1\neq a_2$
 $$
 \begin{aligned}
  r=&a_1y^\star+a_2(1-y^\star)-\dfrac{\sigma^2}2\\
 =&a_2+\dfrac{a_1-a_2-2\alpha+\sqrt{(a_1-a_2)^2+4\alpha^2}}2-\dfrac{\sigma^2}2\\
 =&\dfrac{a_1+a_2-2\alpha+\sqrt{(a_1-a_2)^2+4\alpha^2}}2-\dfrac{\sigma^2}2.
 \end{aligned}
 $$
 As a result
 \begin{equation}\label{e:lambda_22}
 r =
\begin{cases}
\dfrac{a_1+a_2-2\alpha+\sqrt{(a_1-a_2)^2+4\alpha^2}}2-\dfrac{\sigma^2}2 &\text{if } a_1\neq a_2, b_1=b_2\\
a_1-\dfrac{\sigma^2}2&\text{if } a_1= a_2, b_1\neq b_2.
\end{cases}
\end{equation}
 Note that if  $a_1\ne a_2$ and $b_1=b_2$
 \[
 \alpha+(b_2/b_1)( a_1-a_2)-\alpha (b_2/b_1)^2 = a_1-a_2\ne 0
 \]
 and that if $a_1= a_2$ and $b_1\neq b_2$
 \[
 \alpha+(b_2/b_1)( a_1-a_2)-\alpha (b_2/b_1)^2 =\alpha\left(1-b_2/b_1\right)\ne 0.
 \]
 Therefore, the assumptions of Theorem \ref{t:survival_degenerate_2} hold.
 If $ r<0$, by Theorem \ref{t:extinction_degenerate_2} the population goes extinct,
 while if $ r>0$, the population persists by Theorem \ref{t:survival_degenerate_2}.

\subsection{The degenerate case when the conditions of Theorem  \ref{t:survival_degenerate_2} are violated} \label{s:n=2_deg_noHorm}

We analyse the system
\begin{equation}
\begin{cases}
dX_1(t)=\big(X_1(t)(a_1-b X_1(t))-\alpha X_1(t)+\beta X_2(t)\big)dt+\sigma X_1(t)dB(t)  \\
dX_2(t)=\big(X_2(t)(a_2-b X_2(t))+\alpha X_1(t)-\beta X_2(t)\big)dt+\sigma X_2(t)dB(t),
\end{cases}
\end{equation}
when $2(\beta-\alpha) = a_2 - a_1$. In this case $\sigma_1= \sigma_2=\sigma$,
\[
\beta+(b_2/b_1)( a_1-a_2-\alpha+\beta)-\alpha (b_2/b_1) =0
\]
and
$$ r=a_1-\alpha+\beta-\frac{\sigma^2}2.$$
If $ r<0$ then $\lim_{t\to\infty}X_1(t)=\lim_{t\to\infty}X_2(t)=0$ almost surely
as the result of Theorem \ref{t:extinction_degenerate_2}.

We focus on the case $ r>0$ and show that some of the results violate the conclusions of Theorem \ref{t:survival_degenerate_2}.

If we set $Z(t)=X_1(t)/X_2(t)$ then (see \eqref{e5.2})
\[
dZ(t)=\Big((1-Z(t))Z(t)X_2(t)+\beta+\hat a_1 Z(t)-\alpha Z^2(t)\Big)dt.
\]
Noting that $\hat a_1=a_1-a_2-\alpha+\beta=\alpha-\beta$ yields
\[
d(Z(t)-1)=\Big(-(Z(t)-1)Z(t)X_2(t)-(Z(t)-1)(\alpha Z(t)+\beta)\Big)dt.
\]
Assume $Z(0)\neq 1$ and without loss of generality suppose $Z(0)> 1$.
This implies
\begin{equation}\label{e-z-1}
Z(t)-1 = (Z(0)-1)\exp\left(-\int_0^t \left[Z(s)X_2(s)+(\alpha Z(s)+\beta)\right]\,ds\right).
\end{equation}
Since $Z(t)$ and $X_2(t)$ do not explode to $\pm \infty$ in finite time we can conclude that
if $Z(0)\ne 0$ then $Z(t)\ne 0$ for any $t\geq0$ with probability 1.
In other words,
if $\bx = (x_1,x_2)\in \R_+^{2,\circ}$ with $x_1\neq x_2$ then
\[
\PP(X^\bx_1(t)= X^\bx_2(t), t\geq 0)=0.
\]
One can further see from \eqref{e-z-1} that
$Z(t)-1$ tends to $0$ exponentially fast.
If $Z(0)=1$ let $X_1(0)=X_2(0)=x>0$. Similar arguments to the above show that
\[
\PP(X^\bx_1(t)\neq X^\bx_2(t), t\geq 0)=0	.
\]
To gain more insight into the asymptotic properties of $(X_1(t),X_2(t))$,
we study
$$
\begin{aligned}
dX_2(t)=&X_2(t)\Big((\hat a_2-b X_2(t))+\alpha Z(t)\Big)dt+\sigma X_2(t)dB(t)\\
=&X_2(t)\Big(a_1-\alpha+\beta-b X_2(t))+\alpha(Z(t)-1)\Big)dt+\sigma X_2(t)dB(t)
\end{aligned}
$$
We have from It\^o's formula that,
$$
\begin{aligned}
d \dfrac1{X_2(t)}=\left(b+(-a_1+\alpha-\beta+\sigma^2-\alpha(Z(t)-1))\dfrac1{X_2(t)}\right)dt-\sigma\dfrac1{X_2(t)} dB(t).
\end{aligned}
$$
By the variation-of constants formula (see \cite[Section 3.4]{MAO}),
we have
$$\dfrac1{X_2(t)}=\phi^{-1}(t)\left[\dfrac1{x_2}+b\int_0^t\phi(s)ds\right]$$
where
$$\phi(t):=\exp\left[ r t+\alpha\int_0^t(Z(s)-1)ds+\sigma B(t)\right].$$
Thus,
$$
X_2(t)=\dfrac{\phi(t)}{x_2^{-1}+b\int_0^t\phi(s)ds}.
$$
It is well-known that
$$
U(t):=\dfrac{e^{ r t+\sigma B(t)}}{x_2^{-1}+b\int_0^te^{ r s+\sigma B(s)}ds},
$$
is the solution to the stochastic logistic equation
\[
dU(t)=U(t)(a_1-\alpha+\beta-b U(t))\,dt+\sigma U(t)dB(t), U(0)=x_2.
\]
By the law of the iterated logarithm, almost surely
\begin{equation}\label{e-xx1}
\lim_{t\to\infty}\phi(t)=\lim_{t\to\infty}e^{ r t+\sigma B(t)}=\infty.
\end{equation}
We have
$$\dfrac{X_2(t)}{U(t)}=\dfrac{\exp\left(\alpha\int_0^t(Z(s)-1)ds\right)\left[x^{-1}_2+b\int_0^te^{ r t+\sigma B(t)}ds\right]}{x^{-1}_2+b\int_0^t\phi(s)ds}.
$$
In view of \eqref{e-xx1}, we can use L'hospital's rule
to obtain
\begin{equation}\label{e-xx2}
\begin{aligned}
\lim_{t\to\infty}\dfrac{X_2(t)}{U(t)}=&
\lim_{t\to\infty}\dfrac{\exp\left(\alpha\int_0^t(Z(s)-1)ds\right)e^{ r t+\sigma B(t)}}{\phi(t)}\\
&+\lim_{t\to\infty}\dfrac{\alpha(Z(t)-1)\exp\left(\alpha\int_0^t(Z(s)-1)ds\right)\left[x^{-1}_2+b\int_0^te^{ r t+\sigma B(t)}ds\right]}{b\phi(t)}\\
=&
1+\lim_{t\to\infty}\dfrac{\alpha(Z(t)-1)\left[x^{-1}_2+b\int_0^te^{ r t+\sigma B(t)}ds\right]}{be^{ r t+\sigma B(t)}}
\end{aligned}
\end{equation}
almost surely.
By the law of the iterated logarithm,
$\lim_{t\to\infty} \dfrac{e^{ r t+\sigma B(t)}}{e^{( r-\eps)t}}=\infty$
and
$\lim_{t\to\infty} \dfrac{e^{ r t+\sigma B(t)}}{e^{( r+\eps)t}}=0$
for any $\eps>0$.
Applying this and \eqref{e-z-1} to \eqref{e-xx2},
it is easy to show that with probability $1$
$$\lim_{t\to\infty}\dfrac{X_2(t)}{U(t)}=1.$$
Since  $\lim_{t\to\infty} Z(t)=1$ almost surely,
we also have $\lim_{t\to\infty}\dfrac{X_1(t)}{U(t)}=1$ almost surely.
Thus,  the long term behavior of $X_1(t)$ and $X_2(t)$ is governed by the one-dimensional diffusion $U(t)$.
In particular,
 both $X_1(t)$ and $X_2(t)$ converge to a unique invariant probability measure $\rho$ on $(0,\infty)$,
which is the invariant probability measure of $U(t)$.
 In this case, the invariant probability measure of $\BX(t) = (X_1(t),X_2(t))_{t\geq 0}$
 is not absolutely continuous with respect to the Lebesgue measure on $\R^{2,\circ}_+$.
 Instead, the invariant probability measure
 is concentrated on the one-dimensional manifold
 $\{\bx=(x_1,x_2)\in\R^{2,\circ}_+: x_1=x_2\}$.

\textbf{Biological interpretation} \textit{The stochastic growth rate in this degenerate setting is given by $ r=a_1-\alpha+\beta-\frac{\sigma^2}2$. We note that this term is equal to the stochastic growth rate of patch $1$, $a_1- \frac{\sigma^2}{2}$, to which we add $\beta$, the rate of dispersal from patch $1$ to patch $2$, and subtract $\alpha$, the rate of dispersal from patch $2$ to patch $1$. When
\[
a_1- \frac{\sigma^2}{2} > \alpha - \beta
\]
one has persistence, while when
\[
a_1- \frac{\sigma^2}{2} < \alpha - \beta
\]
one has extinction. In particular, if the patches on their own are sink patches so that $a_1- \frac{\sigma^2}{2}<0$ and $a_2- \frac{\sigma^2}{2}<0$ dispersion cannot lead to persistence since
\[
a_1- \frac{\sigma^2}{2} > \alpha - \beta ~\text{and}~a_2- \frac{\sigma^2}{2} > \beta - \alpha
\]
cannot hold simultaneously. The behavior of the system when $ r>0$ is different from the behavior in the non-degenerate setting of Theorem \ref{t:survival} or the degenerate setting of Theorem \ref{t:survival_degenerate_2}. Namely, if the patches start with equal populations then the patch abundances remain equal for all times and evolve according to the one-dimensional logistic diffusion $U(t)$. If the patches start with different population abundances then $X_1(t)$ and $X_2(t)$ are never equal but tend to each other asymptotically as $t\to\infty$. Furthermore, the long term behavior of $X_1(t)$ and $X_2(t)$ is once again determined by the logistic diffusion $U(t)$ as almost surely $\frac{X_i(t)}{U(t)}\to 1$ as $t\to\infty$. As such, if $ r>0$ we have persistence but the invariant measure the system converges to does not have $\R_+^{2,\circ}$ as its support anymore. Instead the invariant measure has the line  $\{\bx=(x_1,x_2)\in\R^{2,\circ}_+: x_1=x_2\}$ as its support.}

\begin{example}\label{ex:deg}
We discuss the case when $a_1\neq a_2$ and $ \sigma_1=\sigma_2$. The stochastic growth rate can be written by the analysis in the sections above as

\begin{equation}\label{e:rr}
 r =
\begin{cases}
\dfrac{a_1+a_2-2\alpha+\sqrt{(a_1-a_2)^2+4\alpha^2}}2-\dfrac{\sigma^2}2 &\text{if } \alpha=\beta, b_1=b_2\\
a_1-\alpha+\beta-\frac{\sigma^2}{2}&\text{if } a_2-a_1=2(\beta-\alpha), b_1=b_2.
\end{cases}
\end{equation}

\begin{figure}[htp]
\centering
\includegraphics[width=10cm, height=7cm]{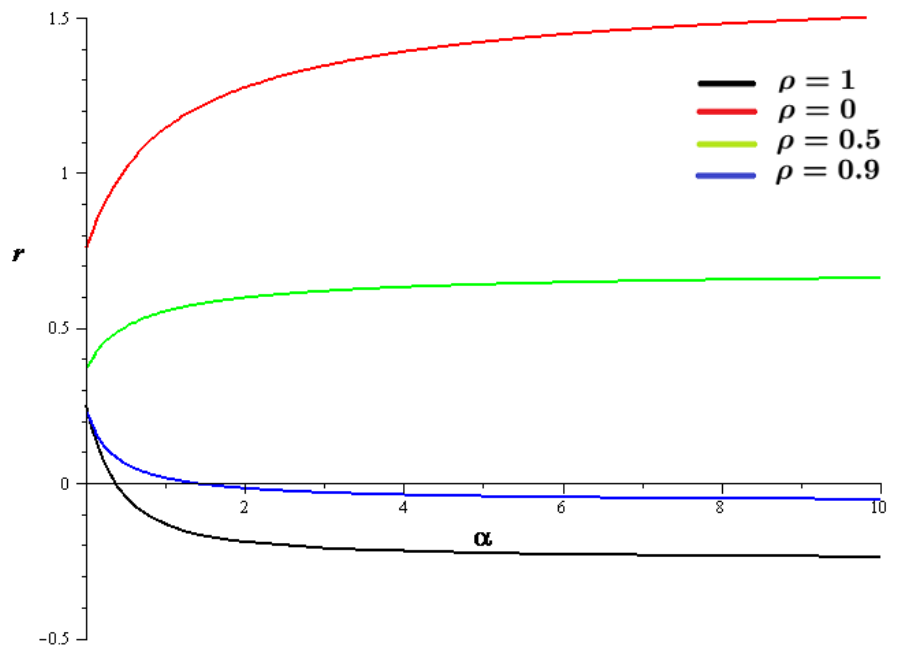}
\caption{
Consider \eqref{n=2-nonde} when $\alpha=\beta$ and the Brownian motions $B_1$ and $B_2$
are assumed to have correlation $\rho$.
The graphs show the stochastic growth rate $ r$ as a function of the dispersal rate $\alpha$ for different values of the correlation. Note that if $\rho=0$ we get the setting when the Brownian motions of the two patches are independent while when $\rho=1$ we get that one Brownian motion drives the dynamics of both patches. The parameters are $\alpha=\beta, a_1=3,a_2=4, \sigma_1^2=\sigma_2^2=7$.}\label{f_lambda}
\end{figure}

\textbf{Biological interpretation} \textit{In the case when $a_1=a_2$, $\sigma_1=\sigma_2$ and $b_1\neq b_2$ (so that the two patches only differ in their competition rates) the stochastic growth rate $ r$ does not depend on the dispersal rate $\alpha$. The system behaves just as a single-patch system with stochastic growth rate $a_1-\frac{\sigma^2}{2}$. In contrast to \cite[Example 1]{ERSS13} coupling two sink patches by dispersion cannot yield persistence.}

\textit{However, if the growth rates of the patches are different $a_1\neq a_2$ then the expression for $ r$ given in \eqref{e:rr} yields for $\alpha\gg |a_1-a_2|$ that  $$ r \approx\dfrac{a_1+a_2}2-\dfrac{\sigma^2}2+\frac{(a_1-a_2)^2}{8\alpha}.$$ In particular
\[
\lim_{\alpha\to\infty} r(\alpha) =  \dfrac{a_1+a_2}2-\dfrac{\sigma^2}2.
\]
We note that $ r$ is a decreasing function of the dispersal rate $\alpha$ for large values of $\alpha$ (also see Figure \ref{f_lambda}). This is different from the result of \cite[Example 1]{ERSS13} where $ r$ was shown to be an increasing function of $\alpha$. In contrast to the non-degenerate case, coupling patches by dispersal decreases the stochastic growth rate and as such makes persistence less likely. This highlights the negative effect of spatial correlations on population persistence and why one may no longer get the rescue effect. This is one of your main biological conclusions. Furthermore, we also recover that dispersal has a negative impact on the stochastic growth rate when there is spatial heterogeneity (i.e. $a_1\neq a_2$). This fact has a long history, going back to the work by \cite{K82}.}

\end{example}

\section{Discussion and Generalizations}\label{s:discussion}

For numerous models of population dynamics it is natural to assume that time is continuous. One reason for this is that often environmental conditions change continuously with time and therefore can naturally be described by continuous time models. There have been a few papers dedicated to the study of stochastic differential equation models of interacting, unstructured populations in stochastic environments (see \cite{BHS08, SBA11, EHS15}). These models however do not account for population structure or correlated environmental fluctuations.

Examples of structured populations can be found by looking at a population in which individuals can live in one of $n$ patches (e.g. fish swimming between basins of a lake or butterflies dispersing between meadows).  Dispersion is viewed by many population biologists as an important mechanism for survival. Not only does dispersion allow individuals to escape unfavorable landscapes (due to environmental changes or lack of resources), it also facilitates populations to smooth out local spatio-temporal environmental changes. Patch models of dispersion have been studied extensively in the deterministic setting (see for example \cite{H83,CCL12}). In the stochastic setting, there have been results for discrete time and space by \cite{BS09}, for continuous time and discrete space by \cite{ERSS13} and for structured populations that evolve continuously both in time and space.

We analyze the dynamics of a population that is spread throughout $n$ patches, evolves in a stochastic environment (that can be spatially correlated), disperses among the patches and whose members compete with each other for resources. We characterize the long-term behavior of our system as a function of $ r$ - the growth rate in the absence of competition. The quantity $ r$ is also the Lyapunov exponent of a suitable linearization of the system around $0$. Our analysis shows that $ r<0$ implies extinction and $ r>0$ persistence. The limit case $ r=0$ cannot be analyzed in our framework. We expect that new methods have to be developed in order to tackle the $ r=0$ scenario.

Since mathematical models are always approximations of nature it is necessary to study how the persistence and extinction results change under small perturbations of the parameters of the models. The concept of robust persistence (or permanence) has been introduced by \cite{HS92}. They showed that for certain systems persistence holds even when one has small perturbations of the growth functions. There have been results on robust persistence in the deterministic setting for Kolmogorov systems by \cite{S00, GH03}. Recently, robust permanence for deterministic Kolmogorov equations with respect to perturbations in both the growth functions and the feedback dynamics has been analyzed by \cite{PS16}. In the stochastic differential equations setting results on robust persistence and extinction have been proven by \cite{SBA11, BHS08}. We prove analogous results in our  framework where the populations are coupled by dispersal. For robust persistence we show in Appendix \ref{s:robust} that even with \textit{density-dependent}{ perturbations of the growth rates, dispersion matrix and environmental covariance matrix, if these perturbations are sufficiently small and if the unperturbed system is persistent then the perturbed system is also persistent. In the case of extinction we can prove robustness when there are small \textit{constant} perturbations of the growth rates, dispersal matrices and covariance matrices.}

In ecology there has been an increased interest in the spatial synchrony present in population dynamics. This refers to the changes in the time-dependent characteristics (i.e. abundances etc) of structured populations. One of the mechanisms which creates synchrony is the dependence of the population dynamics on a synchronous random environmental factor such as temperature or rainfall. The synchronizing effect of environmental stochasticity, or the so-called \textit{Moran effect}, has been observed in multiple population models. Usually this effect is the result of random but correlated weather effects acting on spatially structured populations. Following \cite{L93} one could argue that our world is a spatially correlated one. For many biotic and abiotic factors, like population density, temperature or growth rate, values at close locations are usually similar. For an in-depth analysis of spatial synchrony see \cite{K00, LKB04}.
Most stochastic differential models appearing in population dynamics treat only the case when the noise is non-degenerate (although see \cite{R03, DNDY16}). This simplifies the technical proofs significantly. However, from a biological point of view it is not clear that the noise should never be degenerate. For example if one models a system with multiple populations then all populations can be influenced by the same factors (a disease, changes in temperature and sunlight etc). Environmental factors can intrinsically create spatial correlations and as such it makes sense to study how these degenerate systems compare to the non-degenerate ones. In our setting the $n$ different patches could be strongly spatially correlated. Actually, in some cases it could be more realistic to have the same one-dimensional Brownian motion $(B_t)_{t\geq 0}$ driving the dynamics of all patches. We were able to find conditions under which the proofs from the non-degenerate case can be generalized to the degenerate setting. This is a first step towards a model that tries to explain the complex relationship between dispersal, stochastic environments and spatial correlations.

We fully analyze what happens if there are only two patches, $n=2$, and the noise is degenerate. Our results show unexpectedly, and in contrast to the non-degenerate results by \cite{ERSS13}, that coupling two sink patches cannot yield persistence. More generally, we show that the stochastic growth rate is a decreasing function of the dispersal rate. In specific instances of the degenerate setting, even when there is persistence, the invariant probability measure the system converges to does not have $\R_+^{2,\circ}$ as its support. Instead, the abundances of the two patches converge to an invariant probability measure supported on the line $\{\bx=(x_1,x_2)\in\R^{2,\circ}_+: x_1=x_2\}$.  These examples shows that degenerate noise is not just an added technicality - the results can be completely different from those in the non-degenerate setting. The negative effect of spatial correlations (including the fully degenerate case) has been studied in several papers for discrete-time models (see \cite{S10,HQ89, PL98, BPR02,RRB05}). The negative impact of dispersal on the stochastic growth rate $r$ when there is spatial heterogeneity (i.e. $a_1\neq a_2$) has a long history going back to the work of \cite{K82} on the \textit{Reduction Principle}. Following \cite{A12} the Reduction Principle can be stated as the widely exhibited phenomenon that mixing
reduces growth, and differential growth selects for reduced
mixing. The first use of this principle in the study of the evolution of dispersal can be found in \cite{H83}. The work of \cite{KLS06} provides an independent proof of the Reduction Principle and applications to nonlinear competing species in discrete-time, discrete-space models. In the case of continuous-time, discrete-space models (given by branching processes) a version of the Reduction Principle is analysed by \cite{SLS09}.

\subsection{$k$ species competing and dispersing in $n$ patches}
Real populations do not evolve in isolation and as a result much of ecology is concerned with
understanding the characteristics that allow two species to coexist, or one species to take over the
habitat of another. It is of fundamental importance to understand what will happen to an invading
species. Will it invade successfully or die out in the attempt? If it does invade, will it coexist
with the native population? Mathematical models for invasibility
have contributed significantly to
the understanding of the epidemiology of infectious disease outbreaks (\cite{cross05}) and ecological
processes (\cite{law96}; \cite{cas01}). There is widespread empirical evidence
that heterogeneity, arising from abiotic (precipitation, temperature, sunlight) or biotic (competition,
predation) factors, is important in determining invasibility (\cite{davies05}; \cite{pyvsek05}). However, few theoretical studies have investigated this;
see, e.g., \cite{SLS09}, \cite{schreiber11}, \cite{schreiber12}.

In this paper we have considered the dynamics of one population that disperses through $n$ patches. One possible generalization would be to look at $k$ populations $(\BX^1,\dots,\BX^k)$ that compete with each other for resources, have different dispersion strategies and possibly experience the environmental noise differently. Looking at such a model could shed light upon fundamental problems regarding invasions in spatio-temporally heterogeneous environments.

The extension of our results to competition models could lead to the development of a stochastic version of the treatment of the evolution of dispersal developed for patch models in the deterministic setting by \cite{H83} and \cite{CCL12}. In the current paper we have focused on how spatio-temporal variation influences the persistence and extinction of structured populations. In a follow-up paper we intend to look at the dispersal strategies in terms of \textit{evolutionarily stable strategies} 
(ESS) which can be characterized by showing that a population having a dispersal strategy $(D_{ij})$ cannot be invaded by any other population having a different dispersal strategy $(\tilde D_{ij})$. The first thing to check would be whether this model has ESS and, if they exist, whether they are unique. One might even get that there are no ESS in our setting. For example, \cite{SL11} show that there exist no ESS for periodic non-linear models and instead one gets a coalition of strategies that act as an ESS.
We expect to be able to generalize the results of \cite{CCL12} to a stochastic setting using the methods from this paper.

{\bf Acknowledgments.}  We thank Sebastian J. Schreiber and three anonymous referees for their detailed comments which helped improve this manuscript.

\bibliographystyle{plainnat}
\bibliography{multipatch}

\appendix
\section{The case $ r>0$}\label{sec:+}
The next sequence of lemmas and propositions is used
to prove Theorem \ref{t:survival}. We start by showing that our processes are well-defined Markov processes.

\begin{prop}\label{e:solutions}
The SDE (stochastic differential equation) defined by \eqref{e4.0} has unique strong solutions $\BX(t)=(X_1(t), \dots, X_n(t))$, $t\geq0$ for any $\bx=(x_1,\dots, x_n)\in\R^{n}_+$. Furthermore,   $\BX(t)$ is a strong Markov process with the Feller property, is irreducible on $\R_+^n\setminus \{\mathbf{0}\}$ and  $\PP\{X_i(t)>0,\, t>0, i=1,\dots,n\}=1$ if $\BX(0)\in\R_+^n\setminus \{\mathbf{0}\}$.
\end{prop}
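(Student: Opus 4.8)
The plan is to treat the assertions in the natural order — local existence/uniqueness, invariance of $\R^n_+$, non-explosion, the Feller and strong Markov properties, positivity, and finally irreducibility — the only genuinely non-routine inputs being Assumption \ref{a:competition} (for non-explosion) and Assumptions \ref{a:dispersion}--\ref{a:nonsingular} (for positivity and irreducibility). Since each $b_i$ is locally Lipschitz with $b_i(0)=0$, the drift and the linear diffusion coefficient of \eqref{e4.0} are locally Lipschitz on $\R^n_+$, so \eqref{e4.0} has a unique strong solution on some random interval $[0,\tau_e)$. For invariance of $\R^n_+$ I would rewrite the $i$th equation as $dX_i=X_i[(a_i+D_{ii}-b_i(X_i))\,dt+dE_i]+g_i(t)\,dt$ with $g_i(t)=\sum_{j\ne i}D_{ji}X_j(t)$, and apply variation of constants: $X_i(t)=\Phi_i(t)h_i(t)$, where $\Phi_i(t)=\exp\!\big(\int_0^t(a_i+D_{ii}-b_i(X_i(s))-\tfrac12\sigma_{ii})\,ds+E_i(t)\big)>0$ and $h_i(t)=x_i+\int_0^t\Phi_i^{-1}(s)g_i(s)\,ds$. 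Substituting $g_i(s)=\sum_{j\ne i}D_{ji}\Phi_j(s)h_j(s)$, one sees that $h=(h_1,\dots,h_n)$ solves the pathwise linear ODE $\dot h_i=\sum_{j\ne i}A_{ij}(t)h_j$ with $A_{ij}(t)=\Phi_i^{-1}(t)D_{ji}\Phi_j(t)\ge0$; this is a cooperative (Metzler) system, hence preserves $\R^n_+$, so $X_i(t)=\Phi_i(t)h_i(t)\ge0$ on $[0,\tau_e)$. (Alternatively one invokes a comparison theorem for such Kolmogorov-type equations, the drift pointing inward on each face $\{x_i=0\}$ and the noise being tangent there.)

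For non-explosion I would use the total abundance $S=\sum_iX_i$ as a Lyapunov function. Summing \eqref{e4.0} and using that $D$ has zero row sums makes the dispersal terms cancel, so $dS=\big(\sum_iX_i(a_i-b_i(X_i))\big)dt+\sum_iX_i\,dE_i$; by \eqref{e:b} the drift is $<-\gamma_bS$ once $S\ge M_b$ and is bounded above by a constant on the compact set $\{S\le M_b\}$ by continuity, so $\mathcal L S\le K_1$ for some constant $K_1$. A standard stopping-time argument then gives $\E[S(t\wedge\tau_N)]\le S(0)+K_1t$ with $\tau_N=\inf\{t:S(t)\ge N\}$, so $\PP(\tau_N\le t)\le(S(0)+K_1t)/N\to0$ as $N\to\infty$, whence $\tau_e=\infty$ a.s. and the solution is global. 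Continuous dependence of $\BX^\bx(t)$ on $\bx$ (localization and Gronwall on the event that the path stays in a fixed large ball) then yields, in the usual way, that $\BX$ is a strong Markov process and that $P_t$ maps $C_b(\R^n_+)$ into itself.

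For positivity, fix $\bx\in\R^n_+\setminus\{\mathbf 0\}$ with, say, $x_{i_0}>0$; the cooperative ODE gives $h_{i_0}(t)\ge x_{i_0}>0$ for all $t$, and whenever $D_{i_0i_1}>0$ we get $\dot h_{i_1}(t)\ge A_{i_1i_0}(t)h_{i_0}(t)>0$, so $h_{i_1}(t)>0$ for $t>0$; iterating along directed paths in the graph of $D$, which reaches every vertex by irreducibility of $D$ (Assumption \ref{a:dispersion}), one obtains $h_i(t)>0$, hence $X_i(t)=\Phi_i(t)h_i(t)>0$, for every $i$ and every $t>0$, pathwise. Finally, for irreducibility on $\R^n_+\setminus\{\mathbf 0\}$ I would use Assumption \ref{a:nonsingular}: on $\R^{n,\circ}_+$ the diffusion matrix $\diag(\bx)\Sigma\diag(\bx)$ is nonsingular, and the associated (Stratonovich) control system $\dot\phi_i=\phi_i(a_i-b_i(\phi_i)+\tfrac12\sigma_{ii})+\sum_jD_{ji}\phi_j+\phi_i(\Gamma^\top u)_i$ is exactly controllable inside $\R^{n,\circ}_+$ — any smooth curve $\phi$ in $\R^{n,\circ}_+$ is realized by choosing $u$ so that $\phi_i(\Gamma^\top u)_i=\dot\phi_i-\phi_i(a_i-b_i(\phi_i)+\tfrac12\sigma_{ii})-\sum_jD_{ji}\phi_j$ for each $i$, which is solvable because $\Gamma$ is invertible — so by the Stroock--Varadhan support theorem the law of $\BX^\bx(T)$ has full topological support in $\R^{n,\circ}_+$ for every $\bx\in\R^{n,\circ}_+$ and suitable $T$; for boundary starting points one first flows into the interior, which occurs for all $t>0$ by the positivity just shown. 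I expect the two places requiring care to be the non-explosion estimate, where Assumption \ref{a:competition} is essential, and the controllability/support-theorem step, which must be arranged so as to tolerate $b_i$ being only locally Lipschitz.
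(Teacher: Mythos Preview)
Your proposal is correct, but it proceeds along a genuinely different line from the paper.  The paper handles both non-explosion and positivity by SDE comparison (cf.\ Remark~\ref{r:comparison}): it bounds $\BX$ from above by the linearized process $\mathcal X$ of \eqref{e1.1c} (which is globally defined), giving $\tau_e=\infty$, and bounds $\BX$ from below near $0$ by an auxiliary linear process $\bar{\mathcal X}$ with drift $-|3a_i/2|$, then cites \cite[Proposition~3.1]{ERSS13} to conclude $X_i(t)>0$.  You instead factor $X_i=\Phi_ih_i$ pathwise and observe that $h$ solves a cooperative linear ODE with nonnegative off-diagonal entries $A_{ij}=\Phi_i^{-1}D_{ji}\Phi_j$; this gives both invariance of $\R^n_+$ and, via irreducibility of $D$, strict positivity of every coordinate for $t>0$ in one stroke, without invoking multidimensional SDE comparison theorems.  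For non-explosion you use $S=\sum_iX_i$ as a Lyapunov function and Assumption~\ref{a:competition} directly, whereas the paper's comparison with $\mathcal X$ does not use Assumption~\ref{a:competition} at all for this step; your bound $\mathcal L S\le K_1$ is essentially the content of Lemma~\ref{lem2.1}.  For irreducibility you give an explicit Stroock--Varadhan control argument in $\R^{n,\circ}_+$; the paper does not prove irreducibility inside the present proposition but treats it in Lemma~\ref{petite} via the smooth positive transition density available under Assumption~\ref{a:nonsingular}.  Your route is more self-contained (it sidesteps the delicate hypotheses of multidimensional comparison theorems highlighted in Remark~\ref{r:comparison}); the paper's route has the advantage that the upper comparison $X_i\le\mathcal X_i$ is reused elsewhere.
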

\begin{proof}

Since the coefficients of \eqref{e4.0} are locally Lipschitz,
there exists a unique local solution to \eqref{e4.0} with a given initial value.
In other words, for any initial value, there is a stopping time $\tau_e>0$ and a process $(\BX(t))_{t\geq 0}$ satisfying \eqref{e4.0} up to $\tau_e$ and $\lim\limits_{t\to\tau_e} \|\BX(t)\|=\infty$
(see e.g. \cite[Section 3.4]{RK}).
Clearly, if $\BX(0)=0$ then $\BX(t)=0,\, t\in[0,\tau_e)$ which implies that $\tau_e=\infty$.
By a comparison theorem for SDEs  (see \cite[Theorem 1.2]{GM94} and Remark \ref{r:comparison} below),
\begin{equation}\label{e1.1b}
\PP\left\{X_i(t)<\mathcal X_i(t),\,  t\in(0,\tau_e), i=1,\dots, n\right\}=1 \text{ if }X_i(0)=\mathcal X_i(0) \geq M_b
\end{equation}
where $(\mathcal X_i(t))_{t\geq 0}$ is given by \eqref{e1.1c}.
Since \eqref{e1.1c} has a global solution due to the Lipschitz property of its coefficients,
we have from \eqref{e1.1b} that
$\tau_e=\infty$ almost surely.
Define the process
\[
d\bar {\mathcal{X}}_i(t)=\left(- \left|\frac{3a_i}{2}\right|\bar {\mathcal{X}}_i(t)+\sum_{j=1}^n D_{ji}\bar {\mathcal{X}}_j(t)\right)dt+\bar {\mathcal{X}}_i(t)dE_i(t),\, \, i=1,\dots,n.
\]
Since the $b_i$s are continuous and vanish at $0$, there exists $r>0$ such that for $|\mathbf{x}|\leq r$ we have
\begin{equation}\label{e:drift}
- \left|\frac{3a_i}{2}\right| \leq a_i-b_i(x_i), \,i=1,\dots,n.
\end{equation}
Let $\tau$ be the stopping time
\begin{equation}\label{e:tau}
\tau:= \inf\left\{t:\left|\bar {\mathcal{X}}(t)\right|>r\right\}
\end{equation}
Now, consider the case $\BX(0)\in\R_+^n\setminus \{\mathbf{0}\}$. By \cite[Proposition 3.1]{ERSS13}, \eqref{e:drift}, \eqref{e:tau} and a comparison argument (see Remark \ref{r:comparison} and the proof of \cite[Theorem 4.1]{EHS15}), we can show that
\[
\PP\left\{X_i\geq \bar {\mathcal{X}}_i(t) >0,  t\in(0, \tau)\right\}=1,
\]
which implies
\begin{equation}\label{e1.1a}
\PP\left\{X_i(t)>0,  t\in(0,\infty)\right\}=1\,\text{ for all }\BX(0)\in\R_+^n\setminus \{\mathbf{0}\}.
\end{equation}

Moreover, since $\PP\left\{0\leq X_i(t)<\mathcal X_i(t)\,~\text{for all}~ t\geq0, i=1,\dots,n\right\}=1$,
we can use standard arguments (e.g., \cite[Theorem 2.9.3]{MAO}) to obtain the Feller property of
the solution to \eqref{e4.0}.
\end{proof}

\begin{rmk}
There are different possible definitions of ``Feller'' in the literature. What we mean by Feller is that the semigroup $(T_t)_{t\geq 0}$ of the process maps the set of bounded continuous functions $C_b(\R_+^n)$ into itself i.e.
\[
T_t(C_b(\R_+^n)) \subset C_b(\R_+^n), ~t\geq 0.
\]
\end{rmk}

\begin{deff}\label{def:mn}
{\rm We call a mapping $f:\R^d\to\R^d$ \textbf{quasi-monotonously increasing}, if for $j=1,\dots,d$
\[
f_j(x)\leq f_j(y),
\]
whenever $x_j=y_j$ and $x_l\leq y_l, l\neq j$.
}
\end{deff}

\begin{rmk}\label{r:comparison}{\rm
 One often wants to
  apply the well-known comparison theorem for one-dimensional SDEs (see \cite{IW}) to a multidimensional setting.  Below we explain why we can make use of comparison theorems for stochastic differential equations in our setting.
Consider the following two systems
\begin{equation}\label{e:R}
dR_j(t) = a_j(t, R(t))\,dt + \sum_{k=1}^r\sigma_{jk}(t,R(t))dW_k(t)
\end{equation}
and
\begin{equation}\label{e:S}
dS_j(t) = b_j(t, S(t))\,dt + \sum_{k=1}^r\sigma_{jk}(t,S(t))dW_k(t)
\end{equation}
for $j=1,\dots,d, t\geq 0$ together with the initial condition
\begin{equation}\label{e:RS}
R_j(0)\leq S_j(0), j=1,\dots,d~~~\PP-\text{a.s.},
\end{equation}
where $W=(W_1(t),\dots,W_r(t))_{t\geq 0}$ is an $r$-dimensional
 standard Brownian motion, and the coefficients $a_i,b_i,\sigma_{jk}$ are continuous mappings on $\R_+\times \R^d$. Suppose \eqref{e:R} and \eqref{e:S} have explosion times $\theta_R, \theta_S$.

Let (C0), (C1), and (C2) be the following conditions.
\begin{itemize}
\item[(C0)] The solution to \eqref{e:R} is pathwise unique and the drift coefficient $a(t,x)$ is quasi-monotonously
(see Definition \ref{def:mn})
 increasing with respect to $x$.
\item[(C1)] For every $t\geq 0$, $j=1,\dots,d$ and $x\in\R^d$ the following inequality holds
\[
a_j(t,x)\leq b_j(t,x).
\]
\item[(C2)] There exists a strictly increasing function $\rho:\R_+\to\R_+$ with $\rho(0)=0$ and
\[
\int_{0^+}^\infty\frac{1}{\rho^2(u)}\,du=\infty
\]
such that for each $j=1,\dots,d$
\[
\sum_{k=1}^r|\sigma_{jk}(t,x)-\sigma_{jk}(t,y)|\leq \rho(|x_j-y_j|) \ \hbox{ for all } \ t\geq 0, x,y\in\R^d.
\]
\end{itemize}

Sometimes it is assumed incorrectly that conditions (C1) and (C2) suffice to conclude that $\PP\{R(t)\leq Y(t), t\in [0, \theta_R\wedge \theta_S)\}=1$. Some illuminating counterexamples regarding this issue can be found in \cite[Section 3]{AM95}.
However, if in addition to conditions (C1) and (C2), one also has condition (C0), then \cite[Theorem 1.2]{GM94} indicates that  $\PP\{R(t)\leq Y(t), t\in [0, \theta_R\wedge \theta_S)\}=1$.
Note that, in the setting of our paper, the drift coefficient of \eqref{e1.1c} is quasi-monotonously increasing and we can pick $\rho(x)=x, x\in\R_+$. Therefore, conditions (C0), (C1), and C(2) hold,
which  allows us to use the comparison results.
In special cases one can prove comparison theorems even when quasi-monotonicity fails; see \cite[Theorem 6.1]{EHS15} and \cite[Corollary A.2]{BEM07}.
}
\end{rmk}

 To proceed, let us recall some technical concepts and results needed to prove the main theorem.
 Let ${\bf\Phi}=(\Phi_0,\Phi_1,\dots)$ be a discrete-time Markov chain on a general state space $(E,\mathcal{E})$, where $\mathcal{E}$ is a countably generated $\sigma$-algebra.
 Denote by $\mathcal{P}$ the Markov transition kernel for ${\bf\Phi}$.
If there is a non-trivial $\sigma$-finite positive measure $\varphi$ on $(E,\mathcal{E})$ such that for
any $A\in\mathcal{E}$ satisfying $\varphi(A)>0$ we have
$$\sum_{n=1}^\infty \mathcal{P}^n(x, A)>0,\, x\in E$$
 where  $ \mathcal{P}^n$ is the $n$-step transition kernel of ${\bf\Phi}$, then the Markov chain ${\bf\Phi}$ is called $\varphi$-\textit{irreducible}.
It can be shown (see \cite{EN}) that if ${\bf\Phi}$ is $\varphi$-irreducible, then there exists a positive integer $d$ and disjoint subsets $E_0,\dots,
E_{d-1}$ such that for all $i=0,\dots, d-1$ and all $x\in E_i$, we have
$$\mathcal{P}(x,E_j)=1 \text{ where } j=i+1 \text{ (mod } d)$$
and
$$ \varphi \left(E\setminus \bigcup_{i=0}^{d-1}E_i\right)=0.$$
The smallest positive integer $d$ satisfying the above is called the period of ${\bf\Phi}.$
An \textit{aperiodic} Markov chain is a chain with period $d=1$.

A set $C\in\mathcal{E}$ is called \textit{petite}, if there exists a non-negative sequence $(a_n)_{n\in\N}$ with $\sum_{n=1}^\infty a_n=1$
and a nontrivial positive measure $\nu$ on $(E,\mathcal{E})$
such that
$$\sum_{n=1}^\infty a_n \mathcal{P}^n(x, A)\geq\nu(A),\,\, x\in C, A\in\mathcal{E}.$$
The following theorem is extracted from \cite[Theorem 3.6]{JR}.

\begin{thm}\label{polyrate}
Suppose that ${\bf\Phi}$ is irreducible and aperiodic and fix $0<\gamma<1$. Assume that there exists a petite set $C\subset E$, positive constants $\kappa_1, \kappa_2$ and  a function $V:E\to[1,\infty)$ such that $$\mathcal{P}V\leq V-\kappa_1V^\gamma+\kappa_2\1_{C}.$$
Then there exists a probability measure $\pi$ on $(E,\mathcal{E})$ such that
$$(n+1)^{\frac{\gamma}{1-\gamma}}\|\mathcal{P}(x,\cdot)-\pi(\cdot)\|_{TV}\to0\text{ as } n\to\infty \text{ for all } x\in E.$$
\end{thm}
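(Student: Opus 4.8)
The plan is to read the hypothesis as a \emph{polynomial (subgeometric) drift condition}: with the concave, increasing rate function $\phi(v)=\kappa_1v^\gamma$ (note $0<\gamma<1$ and $V\ge 1$), the assumption is $\mathcal{P}V\le V-\phi(V)+\kappa_2\1_C$. From here I would follow the now-standard three-stage route of Jarner--Roberts \cite{JR} (and Tuominen--Tweedie type arguments): (i) deduce positive Harris recurrence and existence of the invariant probability $\pi$; (ii) translate the single drift inequality into a sharp \emph{polynomial moment bound for the return time to $C$}; (iii) feed that bound into a regeneration/coupling estimate to obtain the rate $r(n)=(n+1)^{\gamma/(1-\gamma)}$ in total variation.

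For step (i): since $V$ is everywhere finite and $V\ge 1$, the drift gives $\mathcal{P}V\le V-\kappa_1+\kappa_2\1_C$, a Foster--Lyapunov condition off the petite set $C$; together with $\varphi$-irreducibility this makes $\{{\bf\Phi}_n\}$ positive Harris recurrent, so a unique invariant probability measure $\pi$ exists, and integrating the original drift against $\pi$ yields $\pi(V^\gamma)<\infty$. Since the chain is additionally aperiodic and $\varphi$-irreducible, every petite set is small, so I may Nummelin-split the chain and produce an accessible atom $\alpha$ to regenerate at; alternatively one can keep working with $C$ directly and couple via $V$, but splitting makes the renewal bookkeeping cleanest.

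Step (ii) is the technical heart. Writing $\tau_C=\inf\{n\ge1:{\bf\Phi}_n\in C\}$, the Comparison Theorem applied to the drift gives the basic estimate
\[
\E_x\!\left[\sum_{k=0}^{\tau_C-1}\kappa_1 V^\gamma({\bf\Phi}_k)\right]\le V(x)+\kappa_2\1_C(x).
\]
From this one bootstraps a nested family of subgeometric drift inequalities: using concavity of $v\mapsto v^\gamma$ together with the primitive $H_\phi(v)=\int_1^v\phi(s)^{-1}\,ds\asymp v^{1-\gamma}$, one shows inductively that for each $m$ with $m(1-\gamma)<1$ there is a Lyapunov function $V_m\asymp V^{1-m(1-\gamma)}$ satisfying a further drift inequality of the same shape, and iterating $m$ up to $\lfloor 1/(1-\gamma)\rfloor$ steps yields
\[
\sup_{x\in C}\E_x\!\left[\tau_C^{1/(1-\gamma)}\right]<\infty
\]
(and the analogous weighted bound along the excursion). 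The exponent $1/(1-\gamma)$ is exactly what this chain of inequalities produces, because each application of the drift "buys'' a factor $V^{1-\gamma}$ worth of excursion length; getting the constants and the uniformity over $C$ honest through this iteration is the delicate part.

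Finally, step (iii): passing to the split chain with atom $\alpha$, the polynomial moments of the inter-visit times to $\alpha$ inherited from step (ii) give, via regularly-varying renewal theory (equivalently the coupling estimates of \cite{JR}), that the tail of the coupling time between a chain started at $x$ and a stationary copy is $o\big(n^{-\gamma/(1-\gamma)}\big)$; the coupling inequality then delivers
\[
(n+1)^{\frac{\gamma}{1-\gamma}}\,\|\mathcal{P}^n(x,\cdot)-\pi(\cdot)\|_{\mathrm{TV}}\to 0,\qquad x\in E.
\]
I expect the main obstacle to be step (ii) --- extracting the sharp hitting-time moment with exponent $1/(1-\gamma)$, uniformly over $C$, from one Lyapunov inequality, via the "drift of drifts'' induction; steps (i) and (iii) are essentially routine once that moment bound and the small-set/splitting reduction are in place.
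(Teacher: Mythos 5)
The paper offers no proof of this statement: it is quoted directly from Jarner and Roberts \cite[Theorem 3.6]{JR}, so there is no in-paper argument to compare yours against. Your outline (a one-step Foster--Lyapunov reduction for positive Harris recurrence, the ``drift of drifts'' induction producing return-time moments of order $1/(1-\gamma)$ uniformly over the petite set, and then Nummelin splitting plus coupling to convert those moments into the rate $(n+1)^{\gamma/(1-\gamma)}$) is a faithful reconstruction of the strategy in that cited reference, so it is consistent with the source the paper relies on.
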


The next series of lemmas and propositions are used to show that we can construct a function $V$ satisfying the assumptions of Theorem \ref{polyrate}.

\begin{lem}\label{petite}
For any $T>0$, there exists an open set $N_0\subset \R^{n,\circ}_+$
such that the Markov chain $\{(\BY(kT), S(kT)), k\in \N\}$ on $\Delta\times(0,\infty)$ is $\varphi$-irreducible and aperiodic, where $\varphi(\cdot)=m(\cdot\cap N_0)$
and
$m(\cdot)$ is Lebesgue measure.
Moreover, every compact set $K\subset\Delta\times(0,\infty)$ is petite.
Similarly, $\Delta$ is a petite set of the Markov chain $\{\tilde\BY(kT), k\in\N\}$.
\end{lem}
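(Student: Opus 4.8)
The plan is to use that, since $\Sigma$ is non-singular (Assumption \ref{a:nonsingular}), the solution $(\BX(t))_{t\geq0}$ of \eqref{e4.0} is a \emph{non-degenerate} diffusion on the open orthant $\R^{n,\circ}_+$; after that, the statement is bookkeeping with the notions recalled above. The map $\Phi:\R^{n,\circ}_+\to\Delta^\circ\times(0,\infty)$, $\Phi(\bx)=(\bx/\sum_i x_i,\ \sum_i x_i)$, is a $C^\infty$ diffeomorphism conjugating $(\BX(t))$ to $(\BY(t),S(t))$, so it suffices to establish the corresponding facts for the $T$-skeleton of $(\BX(t))$ and push them through $\Phi$. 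First I would show that for every $T>0$ and every $\bx\in\R^n_+\setminus\{\mathbf{0}\}$ the law $P_\BX(T,\bx,\cdot)$ has a density $p_T(\bx,\cdot)$ with respect to Lebesgue measure on $\R^{n,\circ}_+$ that is jointly continuous on $\R^{n,\circ}_+\times\R^{n,\circ}_+$: on every compact $K\Subset\R^{n,\circ}_+$ the diffusion matrix $\diag(\bx)\Sigma\diag(\bx)$ is uniformly elliptic (since $\Sigma$ is non-singular and $x_i\geq\delta_K>0$ on $K$) and the drift is smooth, so standard interior parabolic regularity furnishes a $C^\infty$ transition density for $t>0$ --- the H\"ormander condition is not needed here.

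Next I would prove $p_T(\bx,\cdot)>0$ on all of $\R^{n,\circ}_+$. By the Stroock--Varadhan support theorem, $\supp P_\BX(T,\bx,\cdot)$ is the closure of the set of time-$T$ endpoints of the control ODE $\dot\phi_i=\phi_i(a_i-b_i(\phi_i))+\sum_j D_{ji}\phi_j+\phi_i(\Gamma^\top u)_i$ with $u\in L^2([0,T];\R^n)$ and $\phi(0)=\bx$; since $\Gamma$ is invertible and $\phi_i>0$ along the trajectory, one can prescribe an arbitrary smooth path in $\R^{n,\circ}_+$ from $\bx$ to any $\by\in\R^{n,\circ}_+$ and solve algebraically for the control realizing it, so every point of $\R^{n,\circ}_+$ is reachable and $\supp P_\BX(T,\bx,\cdot)=\R^{n,\circ}_+$; with continuity this gives $p_T(\bx,\cdot)>0$ on $\R^{n,\circ}_+$ for interior $\bx$. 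For $\bx$ on $\partial\R^n_+\setminus\{\mathbf{0}\}$ one has $\BX(T/2)\in\R^{n,\circ}_+$ a.s.\ by Proposition \ref{e:solutions}, so $P_\BX(T,\bx,\cdot)=\int P_\BX(T/2,\bx,d\bz)\,P_\BX(T/2,\bz,\cdot)$ also has a density, strictly positive on $\R^{n,\circ}_+$.

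Transporting through $\Phi$, the chain $\{(\BY(kT),S(kT))\}$ has a one-step kernel $\mathcal P(w,\cdot)$ that, for every $w\in\Delta\times(0,\infty)$, is carried by $\Delta^\circ\times(0,\infty)$ and has there a density $q_T(w,\cdot)$, strictly positive and (for interior $w$) jointly continuous. Fix a nonempty open $N_0$ with $\overline{N_0}$ a compact subset of $\Delta^\circ\times(0,\infty)$ and set $\varphi(\cdot)=m(\cdot\cap N_0)$. If $\varphi(A)>0$ then $\mathcal P(w,A)\geq\int_{A\cap N_0}q_T(w,v)\,dv>0$ for every starting point $w$, so the chain is $\varphi$-irreducible; and since $\mathcal P^n(w,\cdot)$ then charges every $\varphi$-positive set for all $n\geq1$ and all $w$, the $\varphi$-irreducibility decomposition cannot be cyclic with period $d\geq 2$, i.e.\ the chain is aperiodic. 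For petiteness, given a compact $K\subset\Delta^\circ\times(0,\infty)$ pick an open $N_1$ with $\overline{N_1}\Subset\Delta^\circ\times(0,\infty)$; then $c:=\inf_{(w,v)\in K\times\overline{N_1}}q_T(w,v)>0$ by joint continuity and positivity, so $\mathcal P(w,A)\geq c\,m(A\cap N_1)$ for all $w\in K$, and taking $a_1=1$, $a_n=0$ $(n\geq2)$ and $\nu(\cdot):=c\,m(\cdot\cap N_1)$ --- a nontrivial positive measure --- shows $K$ is petite.

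The same scheme handles $\{\tilde\BY(kT)\}$ on the compact simplex $\Delta$: by \cite[Proposition 3.1]{ERSS13} $\tilde\BY$ is irreducible and strong Feller, so (exactly as above) its kernel has a smooth, strictly positive density on $\Delta^\circ$; a first step from an arbitrary $\by\in\Delta$ into a fixed relatively compact open $N_2\Subset\Delta^\circ$ --- with $\tilde{\mathcal P}(\by,N_2)$ lower semicontinuous in $\by$ (Feller) and everywhere positive (irreducibility together with the interior density), hence bounded below on the compact $\Delta$ --- followed by a second step using the interior density lower bound gives $\tilde{\mathcal P}^2(\by,A)\geq c'\,m(A\cap N_1)$ uniformly in $\by\in\Delta$, so $\Delta$ is petite. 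The main obstacle is the material of the first two paragraphs: continuity of the transition density is classical interior regularity for a uniformly elliptic generator, but combining it cleanly with the controllability argument for strict positivity, and with the reduction for boundary starting points, is where the care lies; the $\varphi$-irreducibility, aperiodicity and petiteness conclusions then follow at once from the definitions recalled above.
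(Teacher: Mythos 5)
Your overall route is the same as the paper's: pass to the non-degenerate diffusion $\BX$ on the orthant via the diffeomorphism $\bx\mapsto(\bx/\sum_i x_i,\sum_i x_i)$, extract a smooth transition density there, and then do the measure-theoretic bookkeeping for $\varphi$-irreducibility, aperiodicity and petiteness. Two steps need repair. First, and most importantly, your petiteness argument for the chain $(\BY(kT),S(kT))$ is only given for compacts $K\subset\Delta^\circ\times(0,\infty)$, via joint continuity of the interior density on $K\times\overline{N_1}$. The lemma asserts --- and the paper later uses, with the set $\Delta\times[\kappa^{-1},\kappa]$ in the proof of Theorem \ref{thm2.2} --- petiteness of compacts $K\subset\Delta\times(0,\infty)$ that may touch $\partial\Delta$, i.e.\ compacts of $\R^n_+\setminus\{\mathbf{0}\}$ meeting the coordinate faces, where the diffusion degenerates and interior parabolic regularity gives you no joint continuity of the density. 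The fix is exactly the two-step scheme you deploy for $\tilde\BY$ on $\Delta$ (and which is the paper's actual argument for \emph{all} compacts): show $P_\BX(T/2,\bx,N_0)>0$ for every $\bx\in\R^n_+\setminus\{\mathbf{0}\}$, use lower semicontinuity from the Feller property plus compactness to bound this below uniformly on $K$, and then concatenate with the density lower bound on $N_0\times N_1$ via Chapman--Kolmogorov. You should apply that scheme to $(\BY,S)$ as well, not only to $\tilde\BY$.

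Second, the inference ``$\supp P_\BX(T,\bx,\cdot)=\R^{n,\circ}_+$ by the support theorem, hence with continuity $p_T(\bx,\cdot)>0$ everywhere'' is a non sequitur: for a continuous density, full support only says that $\{p_T(\bx,\cdot)>0\}$ is open and dense, which forces neither pointwise positivity on a prescribed $\overline{N_1}$ (needed for your petiteness infimum) nor even almost-everywhere positivity on $N_0$ (needed for your irreducibility step, since an open dense set can have complement of positive Lebesgue measure). Strict positivity does hold, but it requires an extra ingredient such as the parabolic Harnack inequality or the strong maximum principle for the locally uniformly elliptic forward equation. Alternatively you can sidestep global positivity entirely, as the paper does: $\int p_{T/2}(\bx_0,\cdot)\,d\bx=1$ yields one point $\bx_1$ with $p_{T/2}(\bx_0,\bx_1)>0$, continuity gives a uniform lower bound on a product of neighborhoods $N_0\times N_1$, and the irreducibility argument of \cite[Proposition 3.1]{ERSS13} supplies $P_\BX(T/2,\bx,N_0)>0$ for every starting point; that is all the lemma needs. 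With these two repairs your proof is correct and essentially coincides with the paper's.
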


\begin{proof}
To prove this lemma, it is more convenient to work with the process $\BX(t)$ that lives on $\R^n_+\setminus\{\mathbf{0}\}$.
Since $(\BX(t))_{t\geq 0}$
is a nondegenerate diffusion with smooth coefficients in $\R^{n,\circ}_+$, by \cite[Corollary 7.2]{LB},
the transition semigroup $P_\BX(t, \bx, \cdot)$ of $(\BX(t))_{t\geq 0}$
has a smooth, positive density $(0,\infty)\times\R^{2n,\circ}_+\ni (t,\bx,\bx')) \mapsto p_\BX(t, \bx, \bx')\in [0,\infty)$.
Fix a point $\bx_0\in \R^{n,\circ}_+$.
Since $\int_{\R^{n,\circ}_+}p(t, \bx_0, \bx)d\bx=1$
 there exists $\bx_1\in\R^{n,\circ}_+$ such that
$p_\BX\left(\frac{T}2, \bx_0, \bx_1\right)>0$. There exist bounded open sets $N_0\ni\bx_0$, $N_1\ni\bx_1$ satisfying
\begin{equation}\label{pe.1}
\hat p:=\inf\left\{p_\BX\left(\frac{T}2, \bx, \bx'\right)>0: \bx\in N_0, \bx'\in N_1\right\}>0.
\end{equation}
Slightly modifying the proof of \cite[Proposition 3.1]{ERSS13} (the part proving the irreducibility of the solution process),
we have that
$\tilde p_\bx:=P_\BX\left(\frac{T}2, \bx, N_0\right)>0$ for all $\bx\in \R^n_+\setminus\{\mathbf{0}\}.$
Since $(\BX(t))_{t\geq 0}$ has the Feller property,
there is a neighborhood $N_\bx\ni\bx$ such that
\begin{equation}\label{pe.2}
P_\BX\left(\frac{T}2, \bx', N_0\right)>\frac{\tilde p_\bx}2, \, \bx'\in N_\bx.
\end{equation}
For any compact set $K\in \R^n_+\setminus\{\mathbf{0}\}$, there are finite $\bx_2,\dots, \bx_k$ such that
$K\subset \bigcup_{i=2}^kN_{\bx_i}$.
As a result,
\begin{equation}\label{pe.3}
P_\BX\left(\frac{T}2, \bx', N_0\right)>\tilde p_K:=\min\left\{\frac{\tilde p_{\bx_i}}2, i=2,\dots, k\right\}.
\end{equation}
In view of \eqref{pe.1}, \eqref{pe.2}, and \eqref{pe.3}, an application of the Chapman-Kolmogorov equations
yields
that for any $\bx\in K$ and
any measurable set $A\subset\R^{n,\circ}_+$,
$$P_\BX\big(T, \bx, A\big)\geq \int_{N_0}P_\BX\left(\frac{T}2, \bx,  d\bx'\right)P_\BX\left(\frac{T}2, \bx', A\right)\geq \tilde p_K\hat p
m(A\cap N_1),$$
where $m(\cdot)$ is Lebesgue measure on $\R^{n,\circ}_+$.
Since the measure $\nu(\cdot)=m(\cdot\cap N_1)$ is non-trivial, we can easily obtain that
$K$ is a petite set of the Markov chain $\{(\BX(kT)), k\in\N\}.$
Moreover, $K$ can be chosen arbitrarily.
Hence, for any $\bx\in\R^{n}_+\setminus\{\mathbf{0}\}$ there is $\bar p_{\bx}>0$ such that
\begin{equation}\label{pe.4}
P_\BX\big(T, \bx, \cdot\big)\geq \bar p_{\bx} m(\cdot\cap N_1),
\end{equation}
which means that $\{(\BX(kT)), k\in\N\}$ is irreducible.

Suppose that $\{(\BX(kT)), k\in\N\}$ is not aperiodic.
Then there are disjoint subsets of $\R^{n}_+\setminus\{\mathbf{0}\}$, denoted by $A_0, \dots, A_{d-1}$ with $d>1$
such that for any $\bx\in A_i$,
$$P_\BX(T, \bx, A_j)=1 \text{ where } j=i+1 \text{ (mod } d).$$
Since $P(T, \bx, \cdot)$ has a density,
$m(A_i)>0$ for $i=0,\dots, d-1$.
In view of \eqref{pe.4}, we must have $m(N_0\cap A_i)=0$ for any $i=0,\dots,d-1$.
This contradicts the fact that
$$ m\left( N_0\bigcap \left(E\setminus \bigcup_{i=0}^{d-1}A_i\right)\right)=0.$$
This contradiction implies that $\{\BX(kT), k\in\N\}$ is aperiodic.
In the same manner, we can prove that $\tilde\BY(t)$ is irreducible, aperiodic and
its state space, $\Delta$, is petite.
\end{proof}

\begin{lem}\label{lem2.1}
There exists a positive constant $K_1$ such that
\begin{equation}\label{lm2.1a}
   \E S^{\by, s}(t) \leq e^{-\gamma_bt}s
  +K_1, \,\, (\by,s)\in \Delta\times(0,\infty), t\geq0.
\end{equation}
Moreover, for any $H>0, T>0$, and $\eps>0$, there is a $\tilde k=\tilde k(H, T, \eps)>0$ such that
\begin{equation}\label{lm2.1b}
\PP\{S^{\by, s}(t)<\tilde k,\, t\in[0,T]\}>1-\eps, (\by,s)\in\Delta\times(0,H].
\end{equation}
\end{lem}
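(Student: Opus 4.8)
The plan is to reduce everything to a one‑dimensional estimate for the total abundance $S(t)$ and to exploit the dissipativity built into Assumption \ref{a:competition}. First I would sum the equations \eqref{e4.0} over $i$: since the dispersal matrix $D$ has zero row sums, $\sum_i\sum_j D_{ji}X_j=\sum_j X_j\sum_i D_{ji}=0$, so
\begin{equation*}
dS(t)=\Big(\sum_{i=1}^n X_i(t)\big(a_i-b_i(X_i(t))\big)\Big)\,dt+dN(t),\qquad N(t):=\int_0^t\sum_{i=1}^n X_i(u)\,dE_i(u),
\end{equation*}
where, by Proposition \ref{e:solutions}, the integrand of $N$ is path‑continuous (hence locally bounded), so $N$ is a continuous local martingale with $N(0)=0$. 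Next I would record the elementary consequence of Assumption \ref{a:competition} that there is a constant $C_0>0$ with
\[
\sum_{i=1}^n x_i\big(a_i-b_i(x_i)\big)\le -\gamma_b\sum_{i=1}^n x_i+C_0\qquad\text{for all }\bx\in\R^n_+ :
\]
on $\{\sum_i x_i\ge M_b\}$ this is just \eqref{e:b}, while on the bounded set $\{\sum_i x_i< M_b\}$ it holds because each map $x\mapsto x(a_i-b_i(x))$ is continuous, hence bounded, on $[0,M_b]$.

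The crucial step is the change of variables $Z(t):=e^{\gamma_b t}S(t)$. By It\^o's formula $Z(t)=s+\int_0^t e^{\gamma_b u}\phi(u)\,du+\widehat N(t)$, where $\widehat N(t):=\int_0^t e^{\gamma_b u}\,dN(u)$ is a continuous local martingale and $\phi(u):=\gamma_b S(u)+\sum_i X_i(u)\big(a_i-b_i(X_i(u))\big)\le C_0$ by the previous bound. Hence
\begin{equation*}
0\le Z(t)\le s+\tfrac{C_0}{\gamma_b}\big(e^{\gamma_b t}-1\big)+\widehat N(t)\qquad\text{for all }t\ge 0,
\end{equation*}
and, rearranging, $\widehat N(t)=Z(t)-s-\int_0^t e^{\gamma_b u}\phi(u)\,du\ge -s-\tfrac{C_0}{\gamma_b}(e^{\gamma_b t}-1)$. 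Thus on each finite interval the local martingale $\widehat N$ is bounded below by a constant, so (every nonnegative local martingale is a supermartingale) $\widehat N$ is a genuine supermartingale and $\E\widehat N(t)\le 0$. Taking expectations in the displayed inequality and dividing by $e^{\gamma_b t}$ gives $\E S^{\by,s}(t)\le e^{-\gamma_b t}s+C_0/\gamma_b$, which is \eqref{lm2.1a} with $K_1:=C_0/\gamma_b$.

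For \eqref{lm2.1b}, dividing the same inequality by $e^{\gamma_b t}$ and using $e^{-\gamma_b t}\le 1$ gives, for $s\le H$ and $t\in[0,T]$,
\[
S(t)\le H+\tfrac{C_0}{\gamma_b}+\big(\widehat N(t)\big)^+\le H+\tfrac{C_0}{\gamma_b}+\sup_{u\in[0,T]}\big(\widehat N(u)\big)^+ .
\]
Now $\widetilde N(t):=\widehat N(t)+s+\tfrac{C_0}{\gamma_b}(e^{\gamma_b T}-1)$ is a nonnegative supermartingale on $[0,T]$ with $\widetilde N(0)\le \bar c:=H+\tfrac{C_0}{\gamma_b}(e^{\gamma_b T}-1)$, so Doob's maximal inequality for supermartingales yields $\PP\{\sup_{[0,T]}(\widehat N(u))^+\ge\lambda\}\le\PP\{\sup_{[0,T]}\widetilde N(u)\ge\lambda\}\le\bar c/\lambda$ for every $\lambda>0$. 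Choosing $\lambda:=2\bar c/\eps$ and $\tilde k:=H+C_0/\gamma_b+\lambda$ — both depending only on $H,T,\eps$ and the fixed model constants — we obtain $\PP\{S^{\by,s}(t)<\tilde k \text{ for all }t\in[0,T]\}\ge 1-\bar c/\lambda=1-\eps/2>1-\eps$, uniformly in $(\by,s)\in\Delta\times(0,H]$. The one point that requires care throughout is that the martingale part of $dS$ is, a priori, only a \emph{local} martingale (there is no advance integrability bound on the $X_i$); the $e^{\gamma_b t}$ scaling is exactly what forces it to be bounded below on finite intervals, hence a true supermartingale, which is what legitimizes both taking expectations in the first part and invoking Doob's inequality in the second.
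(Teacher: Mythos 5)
Your proof is correct, and its core is the same Lyapunov computation the paper uses: both arguments hinge on applying It\^o/Dynkin to $e^{\gamma_b t}S(t)$ and on the dissipativity bound $\gamma_b S+\sum_i X_i(a_i-b_i(X_i))\le C_0$ extracted from Assumption \ref{a:competition} (the paper's constant $\tilde K_1$ is exactly your $C_0$). Where you diverge is in the handling of the local-martingale part and of the uniform-in-$t$ bound \eqref{lm2.1b}. The paper localizes by stopping $S$ at the level-$k$ hitting times $\eta^{\by,s}_k$, applies Dynkin to the stopped process, lets $k\to\infty$ with Fatou for \eqref{lm2.1a}, and reads off \eqref{lm2.1b} from the Chebyshev-type bound $k\,\PP\{\eta^{\by,s}_k\le T\}\le s+\tfrac{\tilde K_1}{\gamma_b}(e^{\gamma_b T}-1)$. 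You instead observe that the compensated local martingale $\widehat N$ is bounded below on $[0,T]$ (because $Z\ge 0$ and the drift integral is bounded above), hence a true supermartingale, which lets you take expectations directly for \eqref{lm2.1a} and invoke Doob's maximal inequality for nonnegative supermartingales for \eqref{lm2.1b}. Both routes are sound and of comparable length; the hitting-time argument is marginally more self-contained (it needs only Markov's inequality on the stopped process), while your supermartingale route packages the localization once and for all and makes the uniformity in $(\by,s)\in\Delta\times(0,H]$ transparent through the explicit constant $\bar c$. Your side remarks are also accurate: the pointwise bound $\sum_i x_i(a_i-b_i(x_i))\le-\gamma_b\sum_i x_i+C_0$ does follow from \eqref{e:b} on $\{\sum_i x_i\ge M_b\}$ and from continuity of $x\mapsto x(a_i-b_i(x))$ on the compact complement, and global existence from Proposition \ref{e:solutions} is what guarantees $\widehat N$ is a well-defined continuous local martingale on all of $[0,T]$.
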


\begin{proof}
In view of \eqref{e:b}, if $s\geq M_b$ then
 $-[\bb(s\by)]^\top \by+\ba^\top\by +\gamma_b\leq 0$.
Let
$$\tilde K_1=\sup\limits_{\by\in\Delta, s\leq M_b} \left\{s(-[\bb(s\by)]^\top \by+\ba^\top\by +\gamma_b)\right\}<\infty.$$
   For $k\in\N$, define the bounded stopping time
\begin{equation}\label{e.eta}
  \eta^{\by, s}_k=\inf\{t\geq 0: S^{\by, s}(t)\geq k\}.
\end{equation}
Dynkin's formula for the function $f(t,s):= e^{\gamma_b t} s$ and the bounded stopping time $t\wedge \eta^{\by, s}_k$ yield
\begin{equation}\label{e2.20}
  \begin{aligned}
  \E[e^{\gamma_bt\wedge\eta^{\by, s}_k}&S^{\by, s}(t\wedge\eta^{\by, s}_k)]\\
  &= s+\E\int_0^{t\wedge\eta^{\by, s}_k}e^{\gamma_bu}S^{\by, s}(u)\left(\gamma_b+[\ba -\bb(S^{\by, s}(u)\BY^{\by,s}(u)]^\top\BY^{\by,s}(u)\right)du
  \\
  &\leq s+\E\int_0^{t\wedge\eta^{\by, s}_k}\tilde K_1e^{\gamma_bu}du\leq s+\dfrac{\tilde K_1}{\gamma_b}(e^{\gamma_bt}-1).
  \end{aligned}
\end{equation}
The claim \eqref{lm2.1b} follows directly from \eqref{e2.20}.
Moreover, by letting $k\to \infty$ in \eqref{e2.20}, we obtain from Fatou's lemma that
\begin{equation}
  \E e^{\gamma_bt}S^{\by, s}(t) \leq s
  +K_1e^{\gamma_bt} \text{ for } K_1=\frac{\tilde K_1}{\gamma_b},
\end{equation}
which implies \eqref{lm2.1a}. \end{proof}

\begin{prop}\label{prop3.2}
For any $\eps>0$ and $T>0$, there is a $\delta=\delta(\eps, T)>0$ such that
$$\PP\left\{\|(\BY^{\by,s}(t), S^{\by, s}(t))-(\tilde \BY^{\by}(t),0) \|\leq\eps, \, 0\leq t\leq T\right\}>1-\eps$$
 given that
$(\by,s)\in\Delta\times(0,\delta).$
\end{prop}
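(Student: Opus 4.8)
The plan is to prove this by a continuity/continuous-dependence argument on finite time horizons, comparing the solution $(\BY^{\by,s}(t),S^{\by,s}(t))$ of \eqref{eq.bys} with the boundary solution $(\tilde\BY^{\by}(t),0)$ of \eqref{eq.by}. First I would observe that the two systems differ only in two respects: the $S$-component starts at $s$ rather than $0$, and the $\BY$-equation in \eqref{eq.bys} contains the extra drift term $-\left(\diag(\BY)-\BY\BY^\top\right)\bb(S\BY)$ which is absent in \eqref{eq.by} once $S\equiv 0$. Since $\bb$ vanishes at $0$ and is locally Lipschitz (Assumption \ref{a:competition}), this extra term is small as long as $S(t)$ stays small. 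The key enabling estimate is Lemma \ref{lem2.1}: by \eqref{lm2.1b}, for the given $T$ and, say, $H=1$, there is $\tilde k=\tilde k(1,T,\eps)$ such that $\PP\{S^{\by,s}(t)<\tilde k,\ t\in[0,T]\}>1-\eps/2$ whenever $s\leq 1$; moreover, by taking $s\leq\delta$ with $\delta$ small one can in fact force $S^{\by,s}(t)$ to remain below \emph{any} prescribed small level on $[0,T]$ with probability $>1-\eps/2$ — this follows by the same Dynkin/stopping-time computation as in Lemma \ref{lem2.1}, since $\E e^{\gamma_b t}S^{\by,s}(t)\leq s + K_1 e^{\gamma_b t}$ gives control that degrades gracefully, but more directly one reruns \eqref{e2.20} with the stopping time $\eta$ at level $\eps_1$ and uses a martingale/maximal inequality to bound $\PP\{\sup_{[0,T]}S^{\by,s}(t)\geq\eps_1\}$ by something like $C(s+\text{small})/\eps_1$.

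Next, on the event that $S^{\by,s}(\cdot)$ stays below a small level $\eps_1$ on $[0,T]$, I would set up a Gronwall estimate for the difference $\BY^{\by,s}(t)-\tilde\BY^{\by}(t)$. Both processes live on the compact simplex $\Delta$, where the coefficients $\by\mapsto \diag(\by)-\by\by^\top$, $\BD^\top\by$, and $\ba-\Sigma\by$ are smooth with bounded derivatives; hence the drift and diffusion coefficients of \eqref{eq.by} are Lipschitz on $\Delta$. Writing $\Delta\BY(t)=\BY^{\by,s}(t)-\tilde\BY^{\by}(t)$, It\^o's formula for $|\Delta\BY(t)|^2$ produces (i) Lipschitz terms in $\Delta\BY$ controllable by Gronwall, and (ii) a forcing term coming from the extra drift $\left(\diag(\BY)-\BY\BY^\top\right)\bb(S\BY)$, whose norm is bounded by $C\sup_{[0,T]}|\bb(S^{\by,s}(u)\BY^{\by,s}(u))|\leq C\,\omega(\eps_1)$ where $\omega$ is a modulus of continuity for $\bb$ near $0$ (using $\bb(\mathbf 0)=0$ and local Lipschitzness, $\omega(\eps_1)\to 0$ as $\eps_1\to 0$). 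Taking expectations of the stopped process, applying the Burkholder–Davis–Gundy inequality to the martingale part and Gronwall's lemma, one gets $\E\sup_{t\in[0,T\wedge\tau_{\eps_1}]}|\Delta\BY(t)|^2\leq C_T\,\omega(\eps_1)^2$, and then a Markov/Chebyshev inequality converts this into $\PP\{\sup_{[0,T]}|\Delta\BY(t)|>\eps/2\}<\eps/2$ provided $\eps_1$ (hence $\delta$) is chosen small enough. Combining with the bound $\sup_{[0,T]}|S^{\by,s}(t)|\leq\eps_1\leq\eps/2$ on the good event, and intersecting the two good events (each of probability $>1-\eps/2$), yields the claim.

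The main obstacle I anticipate is the degeneracy of the diffusion coefficient of the $\BY$-equation at the boundary $\partial\Delta$: the matrix $\diag(\by)-\by\by^\top$ vanishes there, so the standard strong-uniqueness/continuous-dependence estimates need the coefficients to be genuinely Lipschitz up to the boundary. This is in fact fine here — the \emph{map} $\by\mapsto\diag(\by)-\by\by^\top$ is a polynomial, hence globally Lipschitz on the bounded set $\Delta$, and $\Gamma$ is constant — so the It\^o/Gronwall machinery applies without needing ellipticity; the degeneracy is harmless because we are comparing two solutions of genuinely the same (boundary) equation plus a small perturbation, not trying to prove existence of a density. A secondary technical point is making precise the ``$S$ stays small with high probability'' step uniformly over $\by\in\Delta$: here the uniformity is immediate from \eqref{lm2.1b} in Lemma \ref{lem2.1}, whose proof already provides estimates uniform in $\by\in\Delta$ and $s$ in a bounded set, so no extra work beyond re-running \eqref{e2.20} at a threshold level $\eps_1$ instead of $k$ and invoking Doob's inequality. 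Finally, one should note that since $b_i$ is merely locally Lipschitz, all estimates must be localized via stopping times (as in Proposition \ref{e:solutions}) and then the localization removed using the a priori $L^1$-bound on $S$ from Lemma \ref{lem2.1}; this is routine but worth stating.
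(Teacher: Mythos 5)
Your argument is correct in outline, but it takes a genuinely different decomposition from the paper's. The paper observes that $(\tilde\BY^{\by}(t),0)$ is nothing other than the solution $(\BY^{\by,0}(t),S^{\by,0}(t))$ of the \emph{full} system \eqref{e4.1} started at $(\by,0)$, so the proposition reduces to a single continuous-dependence-on-initial-data estimate: after localizing with the stopping time $\eta^{\by,s}_{\tilde k}$ furnished by \eqref{lm2.1b} (which only requires $S$ to stay below a \emph{fixed} level $\tilde k$), the standard Gronwall/BDG bound $\E\sup_{t\leq T\wedge\eta}\|(\BY^{\by,s}(t),S^{\by,s}(t))-(\BY^{\by,0}(t),0)\|^2\leq Cs^2$ plus Chebyshev finishes the proof in one stroke, with the $S$-component controlled for free. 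You instead compare solutions of two \emph{different} equations (\eqref{eq.bys} versus the boundary system \eqref{eq.by}) issued from the same point $\by$, treating the $\bb(S\BY)$ term as a small drift perturbation; this forces you to prove separately that $S^{\by,s}$ stays below an \emph{arbitrarily small} level $\eps_1$ on $[0,T]$ with high probability. That stronger statement does not follow from Lemma \ref{lem2.1} as written --- the bound $\E e^{\gamma_b t}S(t)\leq s+K_1e^{\gamma_b t}$ carries the non-vanishing additive constant $K_1$ --- but it is true and provable along the lines you indicate: for $S\leq 1$ the drift of $S$ is bounded by $CS$, so $e^{-Ct}S(t\wedge\tau)$ is a supermartingale (equivalently $\ln S(t)\leq\ln s+Ct+M(t)$ with $\langle M\rangle_t\leq\|\Sigma\|t$), and Doob's or the exponential martingale inequality yields $\PP\{\sup_{[0,T]}S^{\by,s}(t)\geq\eps_1\}\leq C's/\eps_1$. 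With that step supplied, your Gronwall estimate on $\BY^{\by,s}-\tilde\BY^{\by}$ with forcing term $O(\omega(\eps_1))$ is sound, and your remark that the degeneracy of $\diag(\by)-\by\by^\top$ on $\partial\Delta$ is harmless is exactly right. The paper's route is shorter because it never needs $S$ small, only bounded; yours costs an extra maximal inequality but makes explicit that the only coupling between the interior and boundary dynamics is through the competition term.
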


\begin{proof}
In view of \eqref{lm2.1b},
 for any $\eps>0$, $T>0$, there is $\tilde k=\tilde k(\eps,  T)>0$ such that
\begin{equation}\label{e4.14}
\PP\{\eta_{\tilde k}^{\by,s}<T\}\geq 1-\dfrac{\eps}3, \, (\by,s)\in\Delta\times(0,\eps)
\end{equation}
where $\eta^{\by, s}_k$ is defined by \eqref{e.eta}.
Since the coefficients of Equation \eqref{e4.1} are locally Lipschitz,
using the arguments from \cite[Lemma 9.4]{MAO} and noting
$ S^{\by,0}(t)\equiv0$,  we obtain for any $(\by, s)\in\Delta\times(0,\eps)$ that
\begin{equation}\label{e4.15}
\E\left(\sup_{0\leq t\leq T\wedge\eta_{\tilde k}^{\by,s}\wedge\eta_{\tilde k}^{\by,0}}\left\|\left(\BY^{\by,s}(t), S^{\by,s}(t)\right)-\left(\BY^{\by,0}(t), 0\right)\right\|^2\right)\leq Cs^2,
\end{equation}
where $C$ is a constant that depends on $H, T,\tilde k$.
Applying Chebyshev's inequality to \eqref{e4.15},
there is a $\delta\in(0,\eps)$ such that for all $(\by, s)\in \Delta\times(0,\delta)$
\begin{equation}\label{e4.16}
\PP\left\{\sup_{0\leq t\leq T\wedge\eta_{\tilde k}^{\by,s}\wedge\eta_{\tilde k}^{\by,0}}\left\|\left(\BY^{\by,s}(t), S^{\by,s}(t)\right)-\left(\BY^{\by,0}(t), 0\right)\right\|<\eps\right\}>1-\dfrac\eps3.
\end{equation}
Combining \eqref{e4.15} and \eqref{e4.16} yields
\begin{equation}\label{e4.17}
\PP\left\{\sup_{0\leq t\leq T}\left\|\left(\BY^{\by,s}(t), S^{\by,s}(t)\right)-\left(\BY^{\by,0}(t), 0\right)\right\|<\eps\right\}>1-\eps.
\end{equation}
for any $(\by, s)\in \Delta\times(0,\delta)$.
The desired result is obtained by noting that $\BY^{\by,0}(t)=\tilde\BY^{\by}(t), t\geq 0$.
\end{proof}

\begin{lem}\label{lem2.2}
  There are positive constants $K_2$ and  $K_3$ such that for any $(\by, s)\in\Delta\times(0,\infty), T\geq0$, one has
  \begin{equation}\label{e2.21}
    \E \left([\ln S^{\by,s}(T)]^2\right)\leq ((\ln s)^2+1)K_2\exp\{K_3T\},
  \end{equation}
\end{lem}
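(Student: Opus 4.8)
The plan is to apply It\^o's formula to $\phi(S):=(\ln S)^2$ and to show that, along the flow of \eqref{e4.1}, the drift of $\phi(S(t))$ is bounded above by $\frac32(\ln S(t))^2+K_3''$ for a suitable constant $K_3''>0$; a localization plus Gr\"onwall argument then gives \eqref{e2.21}.

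From the $S$-equation in \eqref{e4.1} and It\^o's formula, writing $\ell(t):=\ln S(t)$ and $m(t):=\ba^\top\BY(t)-[\bb(S(t)\BY(t))]^\top\BY(t)$,
\[
d\ell(t)^2=\Big(2\ell(t)m(t)+\big(1-\ell(t)\big)\BY^\top(t)\Sigma\BY(t)\Big)dt+2\ell(t)\BY^\top(t)\Gamma^\top d\BB(t).
\]
Set $M^*:=\max\{1,M_b\}$. On $\{S(t)<M^*\}$ every argument $S(t)Y_i(t)$ lies in $[0,M^*)$, so local boundedness of the $b_i$ gives a constant $C_1$ with $|m(t)|\le C_1$, hence $2\ell(t)m(t)\le \ell(t)^2+C_1^2$. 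On $\{S(t)\ge M^*\}$ we have $\ell(t)\ge \ln M^*\ge 0$, and Assumption \ref{a:competition} applied with $x_i=S(t)Y_i(t)$ (so $\sum_i x_i=S(t)\ge M_b$) gives $m(t)=-\sum_i Y_i(t)\big(b_i(S(t)Y_i(t))-a_i\big)<-\gamma_b<0$, whence $2\ell(t)m(t)<0\le \ell(t)^2+C_1^2$. Thus $2\ell(t)m(t)\le \ell(t)^2+C_1^2$ always. Since $\BY(t)\in\Delta$ we have $|\BY(t)|\le 1$, so $0\le \BY^\top(t)\Sigma\BY(t)\le C_2:=\|\Sigma\|$, which gives $\big(1-\ell(t)\big)\BY^\top(t)\Sigma\BY(t)\le \frac12\ell(t)^2+C_2+\frac12 C_2^2$. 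Adding these bounds, the drift of $\ell(t)^2$ is at most $\frac32(\ln S(t))^2+K_3''$ with $K_3'':=C_1^2+C_2+\frac12 C_2^2$.

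To make this rigorous I would localize: for $k\in\N$ let $\theta_k:=\inf\{t\ge 0:S(t)\ge k\text{ or }S(t)\le 1/k\}$; by Proposition \ref{e:solutions} the solution neither explodes nor hits $0$, so $\theta_k\uparrow\infty$ a.s. On $[0,t\wedge\theta_k]$ the process $S$ is bounded away from $0$ and $\infty$, so the stochastic integral above is a true martingale; taking expectations in the integrated identity and using the drift bound,
\[
\E\big[\ln S^{\by,s}(t\wedge\theta_k)\big]^2\le (\ln s)^2+K_3''\,t+\frac32\int_0^t \E\big[\ln S^{\by,s}(u\wedge\theta_k)\big]^2\,du,
\]
where we used $\E\int_0^{t\wedge\theta_k}[\ln S(u)]^2\,du\le \int_0^t\E[\ln S(u\wedge\theta_k)]^2\,du$ (the integrand is non-negative). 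Gr\"onwall's inequality yields $\E[\ln S^{\by,s}(t\wedge\theta_k)]^2\le \big((\ln s)^2+K_3''t\big)e^{3t/2}$ uniformly in $k$, and Fatou's lemma as $k\to\infty$ gives $\E[\ln S^{\by,s}(T)]^2\le \big((\ln s)^2+K_3''T\big)e^{3T/2}$. Finally $(\ln s)^2+K_3''T\le \big((\ln s)^2+1\big)(1+K_3''T)\le \big((\ln s)^2+1\big)e^{K_3''T}$, so \eqref{e2.21} holds with $K_2=1$ and $K_3=\frac32+K_3''$.

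The only delicate point is the estimate on $2\ln S(t)\,m(t)$: the competition term $[\bb(S\BY)]^\top\BY$ may be unbounded below (for instance when $b_i(x)=\kappa_i x$), so $m(t)$ cannot be controlled globally. One instead exploits that $m(t)$ is large in modulus only when $S(t)$ is large, in which case $\ln S(t)\ge 0$ and the product $2\ln S(t)\,m(t)$ is negative; this is precisely where Assumption \ref{a:competition} enters, by the same mechanism that underlies Lemma \ref{lem2.1}.
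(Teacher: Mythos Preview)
Your proof is correct and follows essentially the same route as the paper: apply It\^o's formula to $(\ln S)^2$, use Assumption~\ref{a:competition} to handle the potentially unbounded term $2\ln S(t)\,m(t)$ via the sign dichotomy (for $S$ large, $\ln S\ge 0$ while $m<0$; for $S$ bounded, $m$ is bounded), and then conclude by a Gr\"onwall-type estimate. The only difference is cosmetic: the paper bounds the drift by $M_1(\ln S)^2+\text{const}$ and then invokes ``standard arguments'' citing \cite[Theorem~2.4.1]{MAO}, whereas you obtain the coefficient $\tfrac32$ and spell out the localization via $\theta_k$, the martingale property on $[0,t\wedge\theta_k]$, Gr\"onwall, and Fatou explicitly.
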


\begin{proof}
In view of It\^o's formula,
\begin{equation}\label{e2.22}
\begin{aligned}
d\ln^2 S(t)=&\left({\BY(t)}^\top\Sigma{\BY(t)}+2\ln S(t)\left(\ba^\top{\BY(t)}-{[\bb(S(t)\BY(t))]}^\top\BY(t)-\frac12{\BY(t)}^\top\Sigma{\BY(t)}\right)\right)dt\\
&\qquad+2\ln S(t){\BY(t)}^\top d\BE(t).
\end{aligned}
\end{equation}
Now, we estimate $g(\by, s)=\by^\top\Sigma\by+2\ln s\left(\ba^\top{\by}-\bb(s\by)-\frac12{\by}^\top\Sigma{\by}\right)$ for $(\by,s)\in\Delta\times(0,\infty)$.
Let $M_b$ be as in \eqref{e:b}.
If $s>M_b$ then $\ln s>0$ and $\left(\ba^\top{\by}-\bb(s\by)-\frac12{\by}^\top\Sigma{\by}\right)<0$.
Letting $$M_1:=\sup\limits_{\{(\by, s)\in\Delta\times(0,M_b]\}}\left\{\left|\left(\ba^\top{\by}-\bb(s\by)-\frac12{\by}^\top\Sigma{\by}\right)\right|\right\}<\infty$$
and
$$\|\Sigma\|:=\sup\{\by^\top\Sigma\by:\by\in\Delta\},$$
$$g(\by, s)\leq \|\Sigma\|+M_1|\ln s|\leq M_1\ln^2 s+2M_1+\|\Sigma\|\,~\text{for all}~ (\by, s)\in\Delta\times(0,\infty).$$
With this estimate, we can apply Dynkin's formula to \eqref{e2.22}
and use standard arguments (e.g., \cite[Theorem 2.4.1]{MAO}) to obtain
$$    \E \left([\ln S^{\by,s}(T)]^2\1_{A}\right)\leq K_2(\ln s)^2\exp\{K_3T\}\,~\text{for all}~ (\by, s)\in\Delta\times(0,\infty)$$
for some positive constants $K_2$ and  $K_3$.
\end{proof}

\begin{lem}\label{lem2.4}
  There is a positive constant $K_4$ such that for any $(\by, s)\in\Delta\times(0,1)$, and $A\in\F$,
  \begin{equation}\label{e2.1}
    \E \left([\ln S^{\by,s}(T\wedge \zeta^{\by, s})]_-^2\right)\leq (\ln s)^2+K_4\sqrt{\PP(A)}(T+1)[\ln s]_-+K_4T^2,
  \end{equation}
  where
  $[\ln x]_-:=\max\{0, -\ln x\},$
and
\begin{equation}\label{ezeta}
\zeta^{\by, s}:=\inf\{t\geq0:  S^{\by,s}(t)=1\}.
\end{equation}
\end{lem}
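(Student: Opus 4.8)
The plan is to estimate $[\ln S^{\by,s}(t)]_-^2$ up to the stopping time $\zeta^{\by,s}$ at which $S$ first reaches $1$. On the interval $[0,\zeta^{\by,s})$ we have $0<S^{\by,s}(t)<1$ (since we start at $s<1$), hence $\ln S^{\by,s}(t)<0$ and $[\ln S^{\by,s}(t)]_- = -\ln S^{\by,s}(t)$. So on this random interval the square of the negative part coincides with $\ln^2 S^{\by,s}(t)$, and I can reuse the It\^o expansion \eqref{e2.22} from the proof of Lemma~\ref{lem2.2}. The first step is to apply It\^o's formula to $\ln^2 S(t)$, stopped at $T\wedge\zeta^{\by,s}$, and take expectations. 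The martingale term $2\ln S(t)\BY(t)^\top d\BE(t)$ vanishes in expectation after localization (the integrand is bounded on $[0,\zeta^{\by,s})$ in a way that makes the stochastic integral a true martingale, or one localizes with an additional sequence of stopping times and passes to the limit by Fatou, exactly as in Lemma~\ref{lem2.1}).

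The second step is to control the drift term $g(\BY(t),S(t)) = \BY^\top\Sigma\BY + 2\ln S\big(\ba^\top\BY - \bb(S\BY) - \tfrac12\BY^\top\Sigma\BY\big)$ on the set $\{t<\zeta^{\by,s}\}$, i.e.\ where $S(t)<1$. Here $\ln S(t)<0$, so the sign of the second term is the opposite of the sign of $\ba^\top\BY - \bb(S\BY) - \tfrac12\BY^\top\Sigma\BY$. Since the $b_i$ vanish at $0$ and are locally Lipschitz, on the compact region $S\in[0,1]$, $\by\in\Delta$ the quantity $\big|\ba^\top\BY - \bb(S\BY) - \tfrac12\BY^\top\Sigma\BY\big|$ is bounded by some constant $M_2$; likewise $\BY^\top\Sigma\BY\le\|\Sigma\|$. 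Therefore $g(\BY(t),S(t)) \le \|\Sigma\| + 2M_2[\ln S(t)]_-$ on $\{t<\zeta^{\by,s}\}$. This linear-in-$[\ln S]_-$ bound is the key improvement over Lemma~\ref{lem2.2}: because we have truncated at $\zeta^{\by,s}$ the logarithm never becomes large positive, so we never pick up a term proportional to $\ln^2 S$ in the drift, and the Gr\"onwall step collapses to something sub-exponential. Plugging in,
\begin{equation*}
\E\big([\ln S^{\by,s}(T\wedge\zeta^{\by,s})]_-^2\big) \le (\ln s)^2 + \|\Sigma\|\,\E[T\wedge\zeta^{\by,s}] + 2M_2\,\E\int_0^{T\wedge\zeta^{\by,s}}[\ln S^{\by,s}(u)]_-\,du.
\end{equation*}

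The third step handles the last integral. Writing $\E\int_0^{T\wedge\zeta}[\ln S(u)]_-\,du = \int_0^T \E\big([\ln S(u\wedge\zeta)]_- \1_{\{u<\zeta\}}\big)\,du$ and applying Cauchy--Schwarz as $\E\big([\ln S(u\wedge\zeta)]_-\1_{\{u<\zeta\}}\big)\le \big(\E[\ln S(u\wedge\zeta)]_-^2\big)^{1/2}$, one gets a closed integral inequality of the form $h(T)\le (\ln s)^2 + cT + c\int_0^T \sqrt{h(u)}\,du$ with $h(T):=\E([\ln S^{\by,s}(T\wedge\zeta)]_-^2)$. A standard nonlinear-Gr\"onwall (Bihari) argument, or simply bounding $\sqrt{h(u)}\le \tfrac12(1+h(u))$ and applying ordinary Gr\"onwall, yields a bound of the stated shape; the presence of the set $A\in\F$ and the factor $\sqrt{\PP(A)}$ suggests the intended route is to first insert $\1_A$ inside the expectation, use $\E\big([\ln S(u\wedge\zeta)]_-\1_A\big)\le \big(\E[\ln S(u\wedge\zeta)]_-^2\big)^{1/4}\sqrt{\PP(A)}$ (or a direct Cauchy--Schwarz pairing $[\ln s]_-\cdot\1_A$ coming from comparison with the linearized process as in \eqref{e1.1b}), producing the cross term $K_4\sqrt{\PP(A)}(T+1)[\ln s]_-$ and the pure $K_4 T^2$ term after absorbing. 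Collecting the three contributions and enlarging the constant to a single $K_4$ gives \eqref{e2.1}.

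The main obstacle I expect is bookkeeping the role of the event $A$ and the linear term $K_4\sqrt{\PP(A)}(T+1)[\ln s]_-$: the raw It\^o/Gr\"onwall computation naturally produces $(\ln s)^2 + K T^2 + (\text{cross term})$, and one must be careful that the cross term really can be made $O(\sqrt{\PP(A)}(T+1)[\ln s]_-)$ rather than $O((T+1)[\ln s]_-)$. This presumably uses a comparison with the competition-free process $\mathcal S(t)$ (whose log is an explicit Gaussian-type process), restricted to $A$, so that the $[\ln s]_-$-linear part is weighted by $\sqrt{\PP(A)}$ via Cauchy--Schwarz; getting the exponents right there is the delicate point. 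Everything else — the It\^o expansion, the boundedness of the drift on $\{S\le 1\}$, the martingale localization, and the final Gr\"onwall — is routine given Lemmas~\ref{lem2.1}--\ref{lem2.2} and the comparison framework of Remark~\ref{r:comparison}.
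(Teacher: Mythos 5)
Your first two steps are sound, but the approach via It\^o on $\ln^2 S$ plus a Bihari/Gr\"onwall closure does not deliver the lemma, and the step you yourself flag as ``the delicate point'' is exactly where it fails. The paper's proof is structurally different and much more elementary: it applies It\^o/Dynkin to the \emph{first} power $-\ln S$ up to $T\wedge\zeta^{\by,s}$, obtaining the pathwise inequality $-\ln S^{\by,s}(T\wedge\zeta^{\by,s})\le[\ln s]_-+M_2T+|M^{\by,s}(T\wedge\zeta^{\by,s})|$, where $M^{\by,s}(t)=\int_0^t\BY^\top\Gamma\,d\BB$ has quadratic variation bounded by $\|\Sigma\|T$ uniformly in $(\by,s)$ because $\BY$ lives on the simplex. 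One then squares this pathwise bound, multiplies by $\1_A$, and takes expectations: It\^o's isometry gives $\E[|M|^2\1_A]\le\|\Sigma\|T$, and Cauchy--Schwarz gives $\E[|M|\1_A]\le\sqrt{\PP(A)\|\Sigma\|T}$, which is the sole source of the factor $\sqrt{\PP(A)}$ in the cross term $K_4\sqrt{\PP(A)}(T+1)[\ln s]_-$. No Gr\"onwall argument is needed. (Note also that the inequality is really proved, and used in Proposition \ref{prop2.3}, with $\1_A$ inside the expectation on the left-hand side.)

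The gap in your route is concrete. If you run Gr\"onwall with $\sqrt{h}\le\tfrac12(1+h)$ you get an exponential-in-$T$ prefactor on $(\ln s)^2$, which is no better than Lemma \ref{lem2.2} and useless here; the sharp Bihari bound $\sqrt{h(T)}\le\sqrt{h(0)+cT}+cT$ does preserve the unit coefficient on $[\ln s]_-^2$, but it produces a cross term of size $K(T+1)[\ln s]_-$ with \emph{no} $\sqrt{\PP(A)}$ attenuation. That factor is not cosmetic: in Proposition \ref{prop2.3} the event is $\Omega_4^{\by,s}$ with $\PP(\Omega_4^{\by,s})\le3\eps$, and the drift condition \eqref{e2.5} only closes because the positive contribution $2K_4\sqrt{\eps}\,T^*[\ln s]_-$ can be made small relative to the negative contribution from $\Omega_2^{\by,s}$; without $\sqrt{\PP(A)}$ the Lyapunov argument collapses. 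Your two proposed fixes do not repair this: the inequality $\E(X\1_A)\le(\E X^2)^{1/4}\sqrt{\PP(A)}$ is not Cauchy--Schwarz (the correct exponent is $1/2$, and applying it inside your integral inequality reintroduces $\sqrt{h(u)}$ rather than a constant), and no comparison with the competition-free process $\mathcal S(t)$ is involved. The reason the paper's first-power approach succeeds where the squared-logarithm approach struggles is that in the former the martingale enters linearly with an integrand bounded independently of $[\ln s]_-$, so restricting to $A$ costs exactly one factor of $\sqrt{\PP(A)}$ on a quantity of size $O(\sqrt{T})$; in the latter the stochastic integral carries $\ln S$ inside it and its restriction to $A$ cannot be decoupled from $[\ln s]_-$ without a second, prior, unrestricted estimate.
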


\begin{proof}
Let $$M_2=\sup\limits_{\{(\by,s)\in\Delta\times(0,1)\}}\left\{(-\ba^\top{\bf y}+\frac12{\bf y}^\top\Sigma{\bf y}+\bb(s\by)^\top\by\right\}<\infty.$$
Using Dynkin's formula,
\begin{equation}\label{e2.3}
  \begin{aligned}
&-\ln S^{\by, s}(T\wedge \zeta^{\by, s})\\
& \quad = -\ln s -M^{\by, s}(T\wedge \zeta^{\by, s})\\
&\qquad +\int_0^{T\wedge \zeta^{\by, s}}\left(-\ba^\top{\BY^{\by, s}(t)}+\bb(S^{\by, s}(t)\BY^{\by, s}(t))^T\BY^{\by, s}(t)+\frac12{\BY^{\by, s}(t)}^\top\Sigma{\BY^{\by, s}(t)}\right)dt
  \\
 &\quad \leq  [\ln s]_-+M_2T+|M^{\by, s}(T\wedge \zeta^{\by, s})|,
  \end{aligned}
\end{equation}
where
\begin{equation}\label{e2.3a}
M^{\by, s}(t)=\int_0^{t} {\BY(t)}^\top d\BE(t)=\int_0^{t} {\BY(t)}^\top \Gamma d\BB(t).
\end{equation}
It follows from \eqref{e2.3} that
\begin{equation}\label{e2.4}
  \begin{aligned}
\text{}[\ln S^{\by, s}(T\wedge \zeta^{\by, s})]_-^2\1_A\leq& \left([\ln s]_-+M_2T+|M^{\by, s}(T\wedge \zeta^{\by, s})|\right)^2\1_A \\
    \leq& \left([\ln s]_-^2+2(M_2T+|M^{\by, s}(T\wedge \zeta^{\by, s})|)[\ln s]_-\right)\1_A\\
    &+\left(2(M_2T)^2+2|M^{\by, s}(T\wedge \zeta^{\by, s})|^2\right)\1_A
  \end{aligned}
\end{equation}
An application of It\^o's isometry yields
\begin{equation}\label{e:Iso}
\begin{aligned}
\E[|M_{z,y}(T\wedge \zeta^{\by, s})|^2\1_A]\leq \E|M_{z,y}(T\wedge \zeta^{\by, s})|^2\leq\|\Sigma\|T.
\end{aligned}
\end{equation}
By a straightforward use of H\"{o}lder's inequality and \eqref{e:Iso},
\begin{equation}\label{e:Hold}
\begin{aligned}
\E[|M_{z,y}(T\wedge \zeta^{\by, s})|\1_A]\leq&\Big(\PP(A)\E |M_{z,y}(T\wedge \zeta^{\by, s})|^2\Big)^{1/2}\\
\leq&\sqrt{\PP(A)}\sqrt{\|\Sigma\|T}
\leq\sqrt{\PP(A)\|\Sigma\|}(T+1).
\end{aligned}
\end{equation}
Taking expectation on both sides of \eqref{e2.4} and using the estimates from \eqref{e:Iso} and \eqref{e:Hold}, we have
  \begin{equation*}
   \E \left[[\ln S^{\by, s}(T\wedge \zeta^{\by, s})]_-^2\1_A\right]\leq[\ln s]_-^2\PP(A)+K_4(T+1)\sqrt{\PP(A)}[\ln s]_-+K_4T^2,
     \end{equation*}
for some positive constant $K_4.$
\end{proof}
Let $M_3$ be a positive constant such that
\begin{equation}\label{e2.24}
\left|\ba^\top{\bf y}-\frac12{\bf y}^\top\Sigma{\bf y}-\ba^\top{\bf y'}-\frac12{\bf y'}^\top\Sigma{\bf y'}\right|\leq M_3\|\by'-\by|,\, \by, \by'\in\Delta.
\end{equation}
From now on, we assume that $\eps\in(0,1)$ is chosen small enough to satisfy the following
\begin{equation}\label{e2.5}
\begin{split}
\left(M_3+2\right)\eps+\sup\limits_{\{0\leq s\leq\eps, \by\in\Delta\}}\{\bb(s\by)^\top \by\}&<\dfrac{ r}4\\
-\frac{3 r}2(1-3\eps)+2K_4\sqrt{\eps}&<- r
\end{split}
\end{equation}

\begin{lem}\label{lem2.5}
  For $\eps$ satisfying \eqref{e2.5}, there is $\delta(\eps)=\delta\in(0,1)$ and $T^*(\eps)=T^*>1$ such that
  \begin{equation}
    \PP\left\{\ln s+\frac{3 r T^*}{4}\leq \ln S^{\by,s}(T^*)<0\right\}\geq 1-3\eps
  \end{equation}
  for all $(\by, s)\in\Delta\times(0,\delta).$
\end{lem}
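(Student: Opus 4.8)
The plan is to integrate the stochastic differential equation satisfied by $\ln S^{\by,s}(t)$, replace the time average of its drift by the ergodic average of the corresponding function against $\nu^*$ (which equals $ r$ by \eqref{lambda}), and absorb the three resulting errors — the fluctuation of the martingale part, the difference between $\BY^{\by,s}$ and $\tilde\BY^{\by}$, and the competition term — into quantities controlled by \eqref{e2.5}. Applying It\^o's formula to $\ln S$ in \eqref{e4.1} (cf.\ \eqref{e2.22}), for $(\by,s)\in\Delta\times(0,\infty)$ one gets
\[
\ln S^{\by,s}(T)=\ln s+\int_0^T g(\BY^{\by,s}(u))\,du-\int_0^T\bb(S^{\by,s}(u)\BY^{\by,s}(u))^\top\BY^{\by,s}(u)\,du+M^{\by,s}(T),
\]
where $g(\by):=\ba^\top\by-\frac12\by^\top\Sigma\by$ and $M^{\by,s}(t):=\int_0^t\BY^{\by,s}(u)^\top\,d\BE(u)$ has quadratic variation $\int_0^t\BY^{\by,s}(u)^\top\Sigma\BY^{\by,s}(u)\,du\le\|\Sigma\|\,t$. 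By \eqref{lambda} we have $\int_\Delta g\,d\nu^*= r$, and by \eqref{e2.24} the function $g$ is Lipschitz on $\Delta$ with constant $M_3$.

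Fix $\eps$ as in \eqref{e2.5}. I would introduce three good events. First, since $\tilde\BY$ is a strong Feller, irreducible diffusion on the compact simplex $\Delta$ with a \emph{unique} invariant probability measure $\nu^*$ (\cite[Proposition 3.1]{ERSS13}), it is uniformly ergodic, so $\frac1T\int_0^T g(\tilde\BY^{\by}(u))\,du\to  r$ in probability as $T\to\infty$ uniformly over $\by\in\Delta$; hence there is $T_1>1$ with $\PP\{\frac1T\int_0^T g(\tilde\BY^{\by}(u))\,du\ge  r-\eps\}\ge 1-\eps$ for all $T\ge T_1$, $\by\in\Delta$. Second, It\^o's isometry and Chebyshev's inequality give $\PP\{|M^{\by,s}(T)|>\eps T\}\le\|\Sigma\|/(\eps^2 T)$, so there is $T_2>1$ with $\PP\{|M^{\by,s}(T)|\le\eps T\}\ge 1-\eps$ for $T\ge T_2$, uniformly in $(\by,s)$. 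Set $T^*:=\max\{T_1,T_2\}>1$. Third, Proposition \ref{prop3.2} applied with this $T^*$ furnishes $\delta=\delta(\eps,T^*)\in(0,1)$ such that for all $(\by,s)\in\Delta\times(0,\delta)$,
\[
\PP\left\{\sup_{0\le t\le T^*}\left\|(\BY^{\by,s}(t),S^{\by,s}(t))-(\tilde\BY^{\by}(t),0)\right\|\le\eps\right\}\ge 1-\eps .
\]
Since $\tilde\BY^{\by}(t)=\BY^{\by,0}(t)$, all three events can be realized on one probability space by using the same driving Brownian motion.

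On the intersection $G$ of the three events, which has $\PP(G)\ge 1-3\eps$, one has $S^{\by,s}(t)\le\eps$ for $t\le T^*$, hence $\ln S^{\by,s}(T^*)\le\ln\eps<0$ because $\eps<1$; this gives the upper bound. For the lower bound, on $G$ the Lipschitz estimate for $g$ gives $\int_0^{T^*}g(\BY^{\by,s}(u))\,du\ge\int_0^{T^*}g(\tilde\BY^{\by}(u))\,du-M_3\eps T^*\ge( r-\eps-M_3\eps)T^*$; the bound $0\le S^{\by,s}(u)\le\eps$ together with the first line of \eqref{e2.5} yields $\int_0^{T^*}\bb(S^{\by,s}(u)\BY^{\by,s}(u))^\top\BY^{\by,s}(u)\,du\le\big(\frac{ r}{4}-(M_3+2)\eps\big)T^*$; and $M^{\by,s}(T^*)\ge-\eps T^*$. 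Substituting into the It\^o formula,
\[
\ln S^{\by,s}(T^*)-\ln s\ge\Big[( r-\eps-M_3\eps)-\Big(\frac{ r}{4}-(M_3+2)\eps\Big)-\eps\Big]T^*=\frac{3 r}{4}T^*,
\]
which is exactly the claimed lower bound; every occurrence of $M_3$ and of $\eps$ cancels precisely because the constants are matched in \eqref{e2.5}.

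The one step that is not mere bookkeeping is the \emph{uniform}-in-$\by$ ergodic estimate used for the first event: one needs the time average of the continuous function $g$ along $\tilde\BY^{\by}$ to be within $\eps$ of $ r$ with probability at least $1-\eps$ at a time $T_1$ independent of $\by\in\Delta$. This is where the compactness of $\Delta$ and the strong Feller, irreducible, uniquely ergodic structure of $\tilde\BY$ from \cite[Proposition 3.1]{ERSS13} are essential: together they yield uniform (in fact geometric) ergodicity, hence a uniform law of large numbers with an $L^2$ rate, which is all that is required. The remainder is elementary manipulation of the inequalities \eqref{e2.24} and \eqref{e2.5} and an appeal to the already-established Proposition \ref{prop3.2}.
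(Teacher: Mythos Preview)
Your proof is correct and follows essentially the same route as the paper's: decompose $\ln S^{\by,s}(T^*)-\ln s$ via It\^o's formula, control the ergodic average of $g(\tilde\BY^\by)$ uniformly in $\by$ via geometric ergodicity, bound the martingale by It\^o's isometry and Chebyshev, and use Proposition~\ref{prop3.2} to transfer from $\tilde\BY^\by$ to $\BY^{\by,s}$ and to keep $S^{\by,s}$ below $\eps$; the algebra with \eqref{e2.5} then closes exactly as you show. The only cosmetic difference is that the paper packages the ergodic and coupling events together into a set $\Omega_1^{\by,s}$ (yielding the bound $\int_0^{T^*}g(\BY^{\by,s})\,dt>(r-(M_3+1)\eps)T^*$ directly) before intersecting with the martingale event, whereas you keep all three events separate; and the paper makes the uniform ergodicity step explicit by citing Lemma~\ref{petite} (that $\Delta$ is petite) together with \cite[Theorem~6.1]{MT} to obtain \eqref{e2.23}, which you invoke more abstractly as ``strong Feller plus irreducible on a compact space implies uniform geometric ergodicity.''
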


\begin{proof}
Since $\Delta$ is a petite set of $\{\tilde\BY(t):t\geq0\}$, in view of \cite[Theorem 6.1]{MT}, there are $\gamma_1$ and $\gamma_2>0$ such that
\begin{equation}\label{e2.23}
\|\tilde P(t, \by,\cdot)-\nu^*\|_{TV}\leq \gamma_1\exp(-\gamma_2t),\, \by\in\Delta, t\in[0,\infty).
\end{equation}
where $\tilde P(t,\by,\cdot)$ is the transition probability of $\{\tilde\BY(t):t\geq0\}$.
Let $M_4=\max\limits_{\by\in\Delta}\{|\ba^\top{\bf y}-\frac12{\bf y}^\top\Sigma{\bf y}|\}<\infty.$
In view of \eqref{lambda} and \eqref{e2.23}, we have
\begin{equation}\label{e2.23a}
\begin{aligned}
\dfrac1T\E&\left|\int_0^T\left(\ba^\top\tilde\BY^{\by}(t)-\frac12{\tilde\BY^{\by}(t)}^\top\Sigma{\tilde\BY^{\by}(t)}\right)dt- r T\right|\\
&\leq\dfrac1T\int_0^T\int_\Delta\left|\left(\ba^\top{\bf y'}-\frac12{\bf y'}^\top\Sigma{\bf y'}\right)\left(\tilde P(t,\by,d\by')-\nu^*(d\by')\right)\right|\\
&\leq \dfrac{M_4}T\int_0^T\|\tilde P(t, \by,\cdot)-\nu^*\|_{TV}dt
\leq \dfrac{M_4\gamma_1}T.
\end{aligned}
\end{equation}
On one hand, letting $M^{\by, s}(T)$ be defined as \eqref{e2.3a}, we have from It\^o's isometry that
\begin{equation}\label{e2.23b}
\E\left[\dfrac{M^{\by, s}(T)}T\right]^2=\dfrac{1}{T^2}\E\int_0^T{\BY^{\by,s}(t)}^\top\Sigma \BY^{\by,s}(t)dt\leq\dfrac{\|\Sigma\|}T.
\end{equation}
With standard estimation techniques,
it follows from \eqref{e2.23a} and \eqref{e2.23b} that for any $\eps>0$, there is a $T^*=T^*(\eps)$ such that
\begin{equation}\label{e2.25}
\PP\left\{\left|\dfrac1{T^*}\int_0^{T^*}\left(\ba^\top\tilde\BY^{\by}(t)-\frac12{\tilde\BY^{\by}(t)}^\top
\Sigma{\tilde\BY^{\by}(t)}\right)dt- r\right|<\eps\right\}>1-\eps,\, \by\in\Delta,
\end{equation}
and
\begin{equation}\label{e2.25a}
\PP\left\{\left|\dfrac{M^{\by, s}(T^*)}{T^*}\right|<\eps\right\}>1-\eps,\, (\by,s)\in\Delta\times (0,\infty).
\end{equation}

By virtue of Proposition \ref{prop3.2}, \eqref{e2.24}, and \eqref{e2.25},
there is $\delta=\delta(\eps, T^*)\in(0,\eps)$ such that
$$\PP(\Omega_1^{\by, s})>1-2\eps,\, (\by,s)\in\Delta\times(0,\delta)$$
where
$$\begin{array}{ll}\Omega_1^{\by, s}&\!\!\!\disp :=\left\{\int_0^{T^*}\left(\ba^\top\BY^{\by,s}(t)-\frac12{\BY^{\by,s}(t)}^\top\Sigma{\BY^{\by,s}(t)}\right)dt>T^*( r-(M_3+1)\eps)\right\}\\
&\disp \cap\left\{S^{\by, s}(t)<\eps,\, t\in[0,T^*]\right\}.\end{array}$$
Using
$\by^\top\bb(s\by)<\frac r4\,~\text{for all}~ (\by, s)\in\Delta\times(0,\eps)$ from \eqref{e2.5} we have that on the set \newline $\Omega_2^{\by, s}:=\Omega_1^{\by, s}\bigcap\left\{\left|\dfrac{M^{\by, s}(T)}T\right|<\eps\right\}$ the following holds
\begin{equation}\label{e2.26}
  \begin{aligned}
0>\ln\eps\geq\ln S^{\by, s}(T^*)=&\ln s+M^{\by, s}(T^*)-\int_0^{T^*} {\BY^{\by, s}(t)}^\top\bb(S^{\by, s}(t)\BY^{\by, s}(t))dt\\
&+\int_0^{T^*}\left(\ba^\top{\BY^{\by, s}(t)}-\frac12{\BY^{\by, s}(t)}^\top\Sigma{\BY^{\by, s}(t)}\right)dt
  \\\geq& \ln s+\left( r-(M_3+2)\eps-\sup\limits_{\{0\leq s\leq\eps, \by\in\Delta\}}\{\bb(s\by)^\top \by\}\right)T^*\\
  \geq& \ln s+\dfrac{3 r}4T^*.
  \end{aligned}
\end{equation}
Noting
$$\PP\left(\Omega_2^{\by, s}\right)\geq 1-3\eps\,~\text{for all}~ (\by, s)\in\Delta\times(0,\delta),$$
the proof is complete.
\end{proof}

\begin{prop}\label{prop2.3} Assume $ r>0$.
  Let $\delta$ and $T^*$ be  as in Lemma \ref{lem2.5}. There exists a positive constant $K^*=K^*(\delta, T^*)$ such that
  \begin{equation}
    \E[\ln S^{\by,s}(T^*)]_-^2\leq [\ln s]_-^2- r T^*[\ln s]_-+K^*
  \end{equation}
  for any $(\by, s)\in\Delta\times(0,\infty).$
\end{prop}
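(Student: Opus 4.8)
The plan is to treat separately the case where the initial total mass $s$ is bounded away from $0$ and the case $s\in(0,\delta)$ in which Lemma~\ref{lem2.5} is available; throughout, write $\zeta=\zeta^{\by,s}$ for the first hitting time of level $1$ by $S^{\by,s}$ (see \eqref{ezeta}). For $s\ge\delta$ it suffices to bound $\E[\ln S^{\by,s}(T^*)]_-^2$ by a constant $C_1=C_1(\delta,T^*)$. On the event $\{\zeta>T^*\}$ the process $S^{\by,s}$ stays on one side of $1$ on $[0,T^*]$, so $[\ln S^{\by,s}(T^*)]_-$ is either $0$ (when $s\ge1$) or equals $[\ln S^{\by,s}(T^*\wedge\zeta)]_-$, which Lemma~\ref{lem2.4} (with $\PP(A)=1$) controls using $-\ln s\le-\ln\delta$. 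On $\{\zeta\le T^*\}$ one restarts at $S=1$ by the strong Markov property (Proposition~\ref{e:solutions}) and applies Lemma~\ref{lem2.2} with initial mass $1$, which yields the bound $K_2e^{K_3T^*}$, uniform in the restarting configuration. Since $[\ln s]_-^2-rT^*[\ln s]_-\ge-\tfrac14(rT^*)^2$, the claimed inequality holds for $s\ge\delta$ once $K^*\ge C_1+\tfrac14(rT^*)^2$.

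The substance of the proof is the range $s\in(0,\delta)$. I would set $\Omega:=\{\ln s+\tfrac{3rT^*}{4}\le\ln S^{\by,s}(T^*)<0\}$, so $\PP(\Omega)\ge1-3\eps$ by Lemma~\ref{lem2.5}, and split $\E[\ln S^{\by,s}(T^*)]_-^2$ over $\Omega$, $\Omega^c\cap\{\zeta>T^*\}$ and $\Omega^c\cap\{\zeta\le T^*\}$. On $\Omega$ one has $0\le[\ln S^{\by,s}(T^*)]_-\le[\ln s]_--\tfrac{3rT^*}{4}$, hence $\E\big([\ln S^{\by,s}(T^*)]_-^2\1_\Omega\big)\le\big([\ln s]_--\tfrac{3rT^*}{4}\big)^2\PP(\Omega)\le\PP(\Omega)[\ln s]_-^2-(1-3\eps)\tfrac{3rT^*}{2}[\ln s]_-+\tfrac{9(rT^*)^2}{16}$. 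On $\Omega^c\cap\{\zeta>T^*\}$ the mass stays below $1$, so $[\ln S^{\by,s}(T^*)]_-=[\ln S^{\by,s}(T^*\wedge\zeta)]_-$ and Lemma~\ref{lem2.4} with $A=\Omega^c$, together with $\PP(\Omega^c)\le3\eps$, bounds this contribution by $\PP(\Omega^c)[\ln s]_-^2+K_4(T^*+1)\sqrt{3\eps}\,[\ln s]_-+K_4(T^*)^2$. On $\Omega^c\cap\{\zeta\le T^*\}$ one again restarts at level $1$ and uses Lemma~\ref{lem2.2} to get a constant bound $K_2e^{K_3T^*}$. Summing the three pieces, the coefficient of $[\ln s]_-^2$ is exactly $\PP(\Omega)+\PP(\Omega^c)=1$, the coefficient of $[\ln s]_-$ is at most $-(1-3\eps)\tfrac{3rT^*}{2}+K_4(T^*+1)\sqrt{3\eps}$, and the remaining terms form a constant $C_2=C_2(\delta,T^*)$.

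To conclude I would invoke the second inequality of \eqref{e2.5}: since $T^*>1$, after possibly shrinking $\eps$ it forces $-(1-3\eps)\tfrac{3rT^*}{2}+K_4(T^*+1)\sqrt{3\eps}\le-rT^*$, so $\E[\ln S^{\by,s}(T^*)]_-^2\le[\ln s]_-^2-rT^*[\ln s]_-+C_2$ for $s\in(0,\delta)$; taking $K^*=\max\{C_1+\tfrac14(rT^*)^2,\,C_2\}$ then finishes both ranges. The two places I expect to need care are: (i) keeping the coefficient of $[\ln s]_-^2$ equal to exactly $1$ rather than $1+O(\eps)$ — this is why the $\Omega$-estimate is retained with its weight $\PP(\Omega)$, so that it pairs with the $\PP(\Omega^c)$-weighted term produced by Lemma~\ref{lem2.4}; and (ii) absorbing the error $K_4(T^*+1)\sqrt{3\eps}\,[\ln s]_-$ coming from the bad event $\Omega^c$ into the gap between $\tfrac{3rT^*}{2}$ and $rT^*$, which is precisely the role of the second line of \eqref{e2.5}. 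The trick of stopping at the first time $S^{\by,s}$ reaches $1$ and restarting is what handles excursions of $\ln S^{\by,s}$ below $0$ after the total mass has recovered — a situation that neither Lemma~\ref{lem2.4} (valid only while $S<1$) nor Lemma~\ref{lem2.5} covers directly.
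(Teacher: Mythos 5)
Your proof is correct and follows essentially the same route as the paper: for $s\in(0,\delta)$ you split over the good event of Lemma \ref{lem2.5}, the event where $S$ stays below $1$ (handled by Lemma \ref{lem2.4}), and the event where $S$ reaches $1$ (handled by the strong Markov property and Lemma \ref{lem2.2}), then use the second line of \eqref{e2.5} to absorb the error in the $[\ln s]_-$ coefficient; for $s\geq\delta$ you reduce to a constant bound exactly as in the paper's Cases II and III. The only differences are cosmetic — you merge the two large-$s$ cases and explicitly add the $\tfrac14(rT^*)^2$ correction needed for $s\in[\delta,1)$, a point the paper leaves implicit.
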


\begin{proof}
We look at three cases of the initial data $(\by,s)$.

{\bf Case I: $s\in(0,\delta)$.}
 We have from Lemma \ref{lem2.5} that $\PP(\Omega_2^{\by, s})\geq1-3\eps$ where
 $\Omega_2^{\by, s}$ is defined as in the proof of Lemma \ref{lem2.5}.
  On $\Omega_2^{\by, s}$, we have
  $$-\ln s-\frac{3 r T^*}4\geq-\ln S^{\by,s}(T^*)> 0.$$
  Hence,
  $$0\leq [\ln S^{\by,s}(T^*)]_-\leq [\ln s]_--\frac{3 r T^*}4.$$
 Squaring both sides yields
  $$[\ln S^{\by,s}(T^*)]_-^2\leq [\ln s]_-^2-\frac{3 r T^*}2[\ln s]_-+\frac{9 r^2 {T^*}^2}{16},$$
  which implies that
  \begin{equation}\label{e2.14}
    \E\left[\1_{\Omega_2^{\by, s}}[\ln S^{\by,s}(T^*)]^2_-\right]\leq\PP(\Omega_2^{\by, s})[\ln s]_-^2-\frac{3 r T^*}{2}\PP(\Omega_2^{\by, s})[\ln
    s]_-+\frac{9 r^2 {T^*}^2}{16}\PP(\Omega_2^{\by, s}).
  \end{equation}
 On $\Omega_3^{\by, s}:=\{\zeta^{\by, s}<T^*\}$ with $\zeta^{\by, s}$  defined in \eqref{ezeta},
since $\ln S^{\by,s}(\zeta^{\by, s})=0$, we have from Lemma \ref{lem2.2} and the strong Markov property of $(\BY(t), S(t))$ that
 \begin{equation}\label{e2.15}
    \E\left[\1_{\Omega_3^{\by, s}}[\ln S^{\by,s}(T^*)]^2_-\right]\leq\PP(\Omega_3^{\by, s})K_2\exp(K_3T^*).
 \end{equation}
 On the set $\Omega_4^{\by, s}:=\Omega\setminus(\Omega_2^{\by, s}\cup\Omega_3^{\by, s})$, applying Lemma \ref{lem2.4} and noting that $\zeta^{\by, s}>T^*$ in $\Omega_4^{\by, s}$ and $T^*>1$, we obtain
 \begin{equation}\label{e2.27}
     \E \left[\1_{\Omega_4^{\by, s}}[\ln S^{\by, s}(T^*)]_-^2\right]\leq[\ln s]_-^2\PP(\Omega_4^{\by, s})+2K_4T^*\sqrt{\PP(\Omega_4^{\by, s})}[\ln s]_-+K_4{T^*}^2.
\end{equation}
 Adding \eqref{e2.14}, \eqref{e2.15}, and \eqref{e2.27} side by side, we get
        \begin{equation}\label{e2.16}
        \begin{aligned}
    \E [\ln S^{\by, s}(T^*)]_-^2\leq&[\ln s]_-^2+\Big(-\frac{3 r}2(1-3\eps)+2K_4\sqrt{\eps}\Big)T^*[\ln
    s]_-+K^*_5(T^*)\\
    \leq&[\ln s]_-^2- r T^*[\ln
    s]_-+K^*_5(T^*),
    \end{aligned}
 \end{equation}
 where $K^*_5(T^*)$ is a positive constant independent of $(\by, s)\in\Delta\times(0,\delta)$.

 {\bf Case II:  $s\in [\delta, 1]$}.
 We have from Lemma \ref{lem2.2} that
 \begin{equation}\label{e2.28}
 \begin{aligned}
     \E [\ln S^{\by, s}(T^*)]_-^2\leq&\E [\ln S^{\by, s}(T^*)]^2\leq[\ln s]^2+K_2\exp(K_3T^*)\\
     \leq& ([\ln \delta]^2+1)K_2\exp(K_3T^*).
 \end{aligned}
\end{equation}

 {\bf Case III:  $s\in (1, \infty)$}.
     Note that if $\zeta^{\by, s}>T^*$, then $[\ln S^{\by, s}(T^*)]_-^2=0$. Thus, using Lemma \ref{lem2.2} and the strong Markov property of $(\BY(t), S(t))$ once more, we obtain
 \begin{equation}\label{e2.29}
      \E [\ln S^{\by, s}(T^*)]_-^2=\E\left(\1_{\{\zeta^{\by, s}<T^*\}}[\ln S^{\by, s}(T^*)]_-^2\right)\leq K_2\exp(K_3T^*).
      \end{equation}
Combing \eqref{e2.16}, \eqref{e2.28}, and \eqref{e2.29}, and setting
$K^*=\max\{K^*_5(T^*), ([\ln \delta]^2+1)K_2\exp(K_3T^*)\}$,
the proof is concluded.
\end{proof}

\begin{thm}\label{thm2.2}
Suppose that Assumptions \ref{a:dispersion} and \ref{a:nonsingular} hold and that $ r>0$. Let $P(t, (\by,s), \cdot)$ be the semigroup of the process $((\BY(t),S(t))_{t\geq 0}$. Then, there exists an
invariant probability measure $\pi^*$ of the process  $((\BY(t),S(t))_{t\geq 0}$ on $\Delta\times(0,\infty)$. Moreover, $\pi^*(\Delta^\circ\times(0,\infty))=1$, $\pi^*$ is absolutely continuous with respect to the Lebesgue measure on $\Delta\times(0,\infty)$ and
\begin{equation}\label{etv}
\lim\limits_{t\to\infty} t^{q^*}\|P(t, (\by,s), \cdot)-\pi^*(\cdot)\|_{TV}=0, \;(\by,s)\in\Delta^\circ\times(0,\infty),
\end{equation}
where $\|\cdot\|_{TV}$ is the total variation norm and $q^*$ is any positive number.
In addition, for any initial value $(\by,s)\in\Delta\times(0,\infty)$ and any $\pi^*$-integrable function $f$, we have
\begin{equation}\label{slln}
\PP\left\{\lim\limits_{T\to\infty}\dfrac1T\int_0^Tf\left(\BY^{\by,s}(t),S^{\by,s}(t)\right)dt=\int_{\Delta^\circ\times(0,\infty)}f(\by', s')\pi^*(d\by', s')\right\}=1.
\end{equation}
\end{thm}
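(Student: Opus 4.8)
The plan is to deduce Theorem~\ref{thm2.2} from the polynomial ergodicity criterion Theorem~\ref{polyrate} applied to the skeleton chain $\Phi_k:=(\BY(kT^*),S(kT^*))$, $k\in\N$, on $E:=\Delta\times(0,\infty)$, where $T^*$ is the time furnished by Lemma~\ref{lem2.5}. By Lemma~\ref{petite} this chain is $\varphi$-irreducible and aperiodic and every compact $C\subset\Delta\times(0,\infty)$ is petite, so the only missing ingredient is a Lyapunov function $V\ge1$ with $\mathcal P V\le V-\kappa_1 V^\gamma+\kappa_2\1_C$ for $\gamma\in(0,1)$ that can be pushed arbitrarily close to $1$; since Theorem~\ref{polyrate} yields the rate $(k+1)^{\gamma/(1-\gamma)}$ and $\gamma/(1-\gamma)\to\infty$ as $\gamma\uparrow1$, this last point is exactly what produces convergence at rate $t^{q^*}$ for every $q^*>0$.

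First I would build $V=V_0+V_\infty$ from two pieces controlling $S$ near $0$ and near $\infty$. For the large-$s$ part, fix $\theta\in(0,1)$ and put $V_\infty(\by,s):=s^\theta$; Lemma~\ref{lem2.1} and subadditivity of $x\mapsto x^\theta$ give $\E V_\infty(\Phi_1)\le e^{-\gamma_b\theta T^*}V_\infty(\by,s)+K_1^\theta$, a geometric contraction that dominates $V_\infty^\gamma$ for every $\gamma<1$ once $s$ is large, and is bounded on bounded sets. For the small-$s$ part, fix a large integer $q$ and put $V_0(\by,s):=(1+[\ln s]_-^2)^q$. Proposition~\ref{prop2.3} provides the base inequality $\E[\ln S^{\by,s}(T^*)]_-^2\le[\ln s]_-^2-rT^*[\ln s]_-+K^*$; decomposing $\Omega$ as in the proof of Lemma~\ref{lem2.5} into the good set $\Omega_2^{\by,s}$, on which $[\ln S(T^*)]_-\le[\ln s]_--\tfrac34 rT^*$, and its complement of probability $\le3\eps$, and bounding $\E\big[\1_{\Omega\setminus\Omega_2^{\by,s}}[\ln S(T^*)]_-^{2q}\big]$ by Hölder's inequality together with $L^{2q}$-analogues of Lemmas~\ref{lem2.2} and~\ref{lem2.4}, one obtains, for $[\ln s]_-$ large enough,
\[
\E V_0(\Phi_1)\;\le\;V_0(\by,s)\;-\;\tilde\kappa_1\,V_0(\by,s)^{(q-1/2)/q}\;+\;\tilde\kappa_2 .
\]
Choosing $C:=\Delta\times[\delta',M']$ for suitable $0<\delta'<M'<\infty$ (compact in $\Delta\times(0,\infty)$, hence petite by Lemma~\ref{petite}) and adding the two estimates, the contributions away from $C$ give $\mathcal P V\le V-\kappa_1 V^\gamma+\kappa_2\1_C$ with $\gamma=\max\{\theta',(q-1/2)/q\}$, and $(q-1/2)/q\to1$ as $q\to\infty$.

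Theorem~\ref{polyrate} then yields an invariant probability measure $\pi^*$ for $\Phi$ with $(k+1)^{\gamma/(1-\gamma)}\|\mathcal P^k((\by,s),\cdot)-\pi^*\|_{TV}\to0$; the uniqueness implicit in this convergence makes $\pi^*$ independent of $q$, and letting $q\to\infty$ gives \eqref{etv} along the times $kT^*$. To pass to the full semigroup I would use the standard averaging argument: $\bar\pi:=\tfrac1{T^*}\int_0^{T^*}\pi^* P(s,\cdot)\,ds$ is $P(t)$-invariant, and aperiodicity plus uniqueness of the skeleton's invariant measure force $\bar\pi=\pi^*$; convergence \eqref{etv} for all $t$ follows by controlling the oscillation of $t\mapsto\|P(t,(\by,s),\cdot)-\pi^*\|_{TV}$ over one period via the Feller property (Proposition~\ref{e:solutions}) and Lemmas~\ref{lem2.1}--\ref{lem2.2}. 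Absolute continuity of $\pi^*$ and $\pi^*(\Delta^\circ\times(0,\infty))=1$ follow from the smooth strictly positive transition density of the nondegenerate diffusion $(\BX(t))$ on $\R^{n,\circ}_+$ (used in the proof of Lemma~\ref{petite}) transported through the diffeomorphism $\bx\mapsto(\bx/\sum_i x_i,\ \sum_i x_i)$, combined with $\PP\{X_i(t)>0,\ t>0\}=1$ from Proposition~\ref{e:solutions} and the fact that $V_0\to\infty$ as $s\to0$, which excludes any mass on $\{s=0\}$. Finally, the drift condition together with irreducibility makes $(\BY(t),S(t))$ positive Harris recurrent, so the ergodic theorem for Harris processes gives the strong law \eqref{slln}.

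The step I expect to be the main obstacle is the construction of $V_0$: boosting the single quadratic estimate of Proposition~\ref{prop2.3} into the family of drift inequalities for $(1+[\ln s]_-^2)^q$ with $q$ arbitrarily large requires uniform-in-initial-data $L^{2q}$ moment bounds for $\ln S$ and for $[\ln S]_-$ (the higher-moment versions of Lemmas~\ref{lem2.2} and~\ref{lem2.4}), and then a careful Hölder balancing of the bad-set contribution, of probability $\le3\eps$, against the leading contraction term $-2q\cdot\tfrac34 rT^*[\ln s]_-^{2q-1}$. A secondary technical point is the clean transfer from the $T^*$-skeleton to the continuous-time process, in particular the identity $\bar\pi=\pi^*$.
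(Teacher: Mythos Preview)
Your overall architecture---apply Theorem~\ref{polyrate} to the $T^*$-skeleton via a Lyapunov function, then transfer to continuous time---matches the paper, and your argument is essentially correct. The differences are in the execution of two steps, and in both the paper takes a shorter path.

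For the small-$s$ Lyapunov piece you raise $(1+[\ln s]_-^2)$ to a large power $q$, which, as you correctly flag, forces you to upgrade Lemmas~\ref{lem2.2} and~\ref{lem2.4} to $L^{2q}$ estimates and then balance the bad-set contribution via H\"older; this works but the constants (BDG, etc.) depend on $q$, so $\eps$ and hence $T^*$ must be re-chosen for each $q$. The paper instead \emph{lowers} the exponent: it takes $V_q(s)=s+1+[\ln s]_-^{1+q}$ with $q\in(0,1)$ small, so the required drift inequality reads $\mathcal P V_q\le V_q-h_q V_q^{1/(1+q)}+H_q\1_{K_q}$, and Theorem~\ref{polyrate} gives rate $k^{1/q}$. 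Because $1+q<2$, the moment bounds already proved (Proposition~\ref{prop2.3}, Lemmas~\ref{lem2.2}, \ref{lem2.4}) suffice without any strengthening; the obstacle you identified simply disappears. On the large-$s$ side the paper uses the linear $s+1$ rather than $s^\theta$, but either works.

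For the skeleton-to-continuous transfer you build $\bar\pi=\tfrac1{T^*}\int_0^{T^*}\pi^*P(s,\cdot)\,ds$ and then control oscillations over one period. The paper's device is slicker: it first produces a continuous-time invariant measure by tightness of the empirical measures (using Lemmas~\ref{lem2.1}--\ref{lem2.2} to rule out escape to $0$ or $\infty$), identifies it with $\pi^*$ via uniqueness for the skeleton, and then observes directly from Chapman--Kolmogorov and invariance that $t\mapsto\|P(t,(\by,s),\cdot)-\pi^*\|_{TV}$ is nonincreasing, so the rate along $kT^*$ immediately passes to all $t$. Your absolute-continuity and SLLN steps via Harris recurrence coincide with the paper's.
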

\begin{proof}
By virtue of Lemma \ref{lem2.1}, there
is an $h_1:=1-\exp\left(-\gamma_bT^*\right)>0$ satisfying
\begin{equation}\label{e0.3}
\E S^{\by,s}+1\leq s+1-h_1s+K_1\leq s+1-h_1\sqrt{s+1}+K_1+h_1,\,(\by,s)\in\Delta\times(0,\infty).
\end{equation}
Let $V(s)=s+1+[\ln s]_-^2$.
In view of Proposition \ref{prop2.3} and \eqref{e0.3},
\begin{equation}\label{e0.4}
\begin{aligned}
\E V\Big(S^{\by,s}(T^*)\Big)&\leq s+1-h_2\big(\sqrt{s+1}+[\ln s]_-\big)+H_2\\
&\leq V(s)-\dfrac{h_2}2\sqrt{V(s)}+H_2 \text{ for all }(\by,s)\in\Delta\times(0,\infty),
\end{aligned}
\end{equation}
where $h_2=\min\{h_1, r T^*\}, H_2=H_1+h_1+K_1$.
Let $\kappa>1$ such that
\begin{equation}\label{e0.5}
\dfrac{\sqrt{V(s)}}2\geq H_2\,~\text{for all}~ s\notin[\kappa^{-1},\kappa].
\end{equation}
Combining \eqref{e0.4} and \eqref{e0.5}, we arrive at
\begin{equation}\label{e0.6}
\E V\Big(S^{\by,s}(T^*)\Big)\leq V(s)-\dfrac{h_2}4\sqrt{V(s)}+H_2\1_{\{(\by, s)\in \Delta\times[\kappa^{-1},\kappa]\}}\,~\text{for all}~ (\by,s)\in\Delta\times(0,\infty).
\end{equation}
Using the estimate \eqref{e0.6}, Lemma \ref{petite}, and Theorem \ref{polyrate},
the Markov chain $(\BY(kT^*), S(kT^*))_{k\geq 0}$ has a unique invariant probability measure $\pi^*$ and
\begin{equation}\label{e0.7}
k\|P(kT^*,(\by, s),\cdot)-\pi^*\|_{TV}\to 0 \text{ as } k\to\infty.
\end{equation}
As a direct consequence, for fixed $\by_0,s_0$, the family $\{P(kT^*,(\by_0, s_0),\cdot),k\in\N\}$ is tight, that is, for any
$\theta>0$, there is a compact set $K_\theta\subset\Delta\times(0,\infty)$ such that
\begin{equation}\label{e0.8}
P(kT^*,(\by_0, s_0),K_\theta)>1-\theta\,~\text{for all}~ k\in\N.
\end{equation}
Since $s^2+\ln^2 s\to \infty$ as $s\to0$ or $s\to\infty$,
in view of Lemmas \ref{lem2.1} and \ref{lem2.2} and a standard estimate,
there is  a $\kappa_\theta>1$ such that
$$\PP\left\{S^{\by,s}(t)\in[\kappa_\theta^{-1},\kappa_\theta]\right\}>1-\theta,~\text{for all}~ (\by,s)\in K_\theta, t\in[0,T^*],$$
or equivalently,
\begin{equation}\label{e0.9}
P\left(t,(\by,s), \Delta\times[\kappa_\theta^{-1},\kappa_\theta]\right)>1-\theta\,~\text{for all}~ (\by,s)\in K_\theta, t\in[0,T^*].
\end{equation}
Using the Chapman-Kolmogorov relation together with \eqref{e0.8} and \eqref{e0.9} yields
$$P\left(u,(\by_0,s_0), \Delta\times[\kappa_\theta^{-1},\kappa_\theta]\right)>1-2\theta\,~\text{for all}~ u\geq0,$$
which implies that the family of empirical measures
$\left\{\frac1T\int_0^TP(u,(\by_0,s_0),\cdot)du, T>0\right\}$ is tight in $\Delta\times(0,\infty)$.
Thus $(\BY(t), S(t))$ has an invariant probability
measure $\pi_*$ on $\Delta\times(0,\infty)$ (see e.g., \cite[Proposition 6.4]{EHS15}). As a result, the Markov chain $(\BY(kT^*), S(kT^*))_{k\in\N}$
has an
invariant probability measure
$\pi_*$.
In view of \eqref{e0.7}, $\pi_*$ must coincide with $\pi^*$.
Thus, $\pi^*$ is an invariant probability measure of the process $(\BY(t), S(t))_{t\geq 0}$ on $\Delta\times(0,\infty)$.

In the proofs, we used the function $[\ln s]_-^2$ for the sake of simplicity.
In fact, we can treat $[\ln s]_-^{1+q}$ for any small $q\in(0,1)$ in the same manner.
We can show that there are $h_q$, $H_q>0$, and a compact set $K_q\subset\Delta\times(0,\infty)$ satisfying
\begin{equation}\label{e0.10}
\E V_q(S^{\by, s}(T^*))\leq V_q(s)-h_q[V_q(s)]^{\frac1{1+q}}+H_q\1_{\{(\by, s)\in K_q\}},\, (\by,
s)\in\Delta\times(0,\infty),
\end{equation}
where $V_q(s):=s+1+[\ln s]_-^{1+q}.$
Then applying Theorem \ref{polyrate}, we
obtain
\begin{equation}\label{e0.11}
k^{1/q}\|P(kT^*,(\by, s),\cdot)-\pi^*\|\to 0 \text{ as } k\to\infty.
\end{equation}
Let $f:\Delta\times(0,\infty)\mapsto [-1,1]$ be a measurable function.
Since $\pi^*$ is an invariant measure, then for any $u\geq0$,
$$\int_{\Delta\times(0,\infty)}f(\by',s')\pi^*(d\by',ds')=\int_{\Delta\times(0,\infty)}\pi^*(d\by_1,ds_1)\int_{\Delta\times(0,\infty)}P(u, (\by_1,s_1), (\by',ds'))f(\by',s').$$
Using this equality and the Chapman-Kolmogorov equation, we have
$$
\begin{aligned}
|f&(\by',s')(P(t+u,(\by,s),d\by',ds')-\pi^*(d\by',ds')|\\
=&\bigg|\int_{\Delta\times(0,\infty)}\left(P(t, (\by,s), d\by_1, ds_1)-\pi^*(d\by_1,ds_1)\right)\\
&\qquad\qquad\times\int_{\Delta\times(0,\infty)}(f(\by',s')P(t,(\by_1,s_1),(d\by',ds'))\bigg|\\
\leq&\|P(t,(\by,s),\cdot)-\pi^*\|_{TV}\\
&\left(\text{ since } \Big|\int_{\Delta\times(0,\infty)}(f(\by',s')P(t,(\by_1,s_1),(d\by',ds')\Big|\leq1\,~\text{for all}~ \by_1,s_1\right),
\end{aligned}
$$
which means that $\|P(t,(\by,s),\cdot)-\pi^*\|_{TV}$ is decreasing in $t$.
As a result, we
deduce from \eqref{e0.10} that
$$t^{q^*}\|P(t,(\by,s),\cdot)-\pi^*\|_{TV}\to 0 \text{ as } t\to\infty,$$
where $q^*=1/q\in(1,\infty)$.

In view of Proposition \ref{e:solutions}, for any $t>0$, $\PP\{\BY^{\by,s}(t)\in\Delta^\circ\}=1$.
Thus, $$\pi^*(\Delta^\circ\times(0,\infty))=\int_{\Delta\times(0,\infty)}\PP\{\BY^{\by,s}(t)\in\Delta^\circ\}\pi^*(d\by,ds)=\pi^*(\Delta\times(0,\infty))=1.$$
By \cite[Theorem 20.17]{K02}, our process $(\BY(t),S(t))_{t\geq 0}$  is either Harris recurrent or uniformly transient on $\Delta^\circ\times(0,\infty)$. Using \cite[Theorem 20.21]{K02},
our process cannot be uniformly transient and also have an invariant probability measure. Therefore, our process is Harris recurrent. \cite[Theorem 20.17]{K02} further indicates
that any Harris recurrent Feller process on $\Delta^\circ\times(0,\infty)$ with strictly positive transition densities has a locally finite invariant measure that is equivalent to Lebesgue measure and is unique up to normalization. Since we already know that $(\BY(t),S(t))_{t\geq 0}$ has a unique invariant probability measure, this probability measure has an almost everywhere strictly positive density with respect to the Lebesgue measure.
\end{proof}

\section{The case $ r<0$}\label{sec:general-}

\begin{thm}
Suppose that $ r<0$.
Then for any $i=1,\dots,n$ and any $\mathbf{x} = (x_1,\dots,x_n)\in \R_+^{n}$,
\begin{equation}
\PP\left\{\lim_{t\to\infty}\frac{\ln {X}_i^{\mathbf{x}}(t)}{t}= r\right\}=1.
\end{equation}
In particular, for any $i=1,\dots,n$ and any $\mathbf{x} = (x_1,\dots,x_n)\in \R_+^{n}$
\[
\PP\left\{\lim_{t\to\infty}X^{\mathbf{x}}_i(t)=0\right\}=1.
\]

\end{thm}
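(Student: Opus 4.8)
The plan is to control the total abundance $S(t)=\sum_{i=1}^n X_i(t)$ and the proportions $\BY(t)$, which solve \eqref{eq.bys}, and then to transfer the resulting rate to each coordinate. It\^o's formula applied to $\ln S(t)$ via the $S$-equation of \eqref{eq.bys} gives
\begin{equation}\label{e:lnS-ext}
\ln S(t)=\ln S(0)+\int_0^t\Big(\ba^\top\BY(u)-\tfrac12\BY(u)^\top\Sigma\BY(u)-\bb(S(u)\BY(u))^\top\BY(u)\Big)\,du+M(t),
\end{equation}
where $M(t)=\int_0^t\BY(u)^\top\Gamma^\top\,d\BB(u)$ is a continuous martingale with $\langle M\rangle_t=\int_0^t\BY(u)^\top\Sigma\BY(u)\,du\le\|\Sigma\|\,t$, so $M(t)/t\to0$ almost surely by the strong law of large numbers for martingales. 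It therefore suffices to establish: (i) $S(t)\to0$ a.s.; (ii) $\tfrac1t\int_0^t\bb(S(u)\BY(u))^\top\BY(u)\,du\to0$ a.s.; (iii) $\tfrac1t\int_0^t\big(\ba^\top\BY(u)-\tfrac12\BY(u)^\top\Sigma\BY(u)\big)\,du\to r$ a.s.; and (iv) $\ln Y_i(t)/t\to0$ a.s. for each $i$. Granting (i)--(iii), \eqref{e:lnS-ext} gives $\ln S(t)/t\to r$, and then $\ln X_i(t)/t=\ln S(t)/t+\ln Y_i(t)/t\to r$ by (iv); since $r<0$ this also forces $X_i(t)\to0$.

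For (i)--(ii): by Assumption \ref{a:competition}, whenever $S\ge M_b$ the drift of $\ln S$ in \eqref{e:lnS-ext} equals $\ba^\top\BY-\bb(S\BY)^\top\BY-\tfrac12\BY^\top\Sigma\BY<-\gamma_b$, so $\ln S$ has uniformly negative drift above the level $\ln M_b$; together with the estimate $\E S^{\by,s}(t)\le e^{-\gamma_b t}s+K_1$ of Lemma \ref{lem2.1} this already yields $\limsup_t\ln S(t)/t\le0$ a.s. To obtain $S(t)\to0$, the central ingredient is a one-block estimate analogous to Lemma \ref{lem2.5}: for suitable small $\eps>0$ there are $\delta>0$ and $T^*>0$ so that, if $S(t_0)<\delta$, then with probability at least $1-\eps$ the abundance $S$ stays small on $[t_0,t_0+T^*]$ and $\ln S(t_0+T^*)\le\ln S(t_0)+\tfrac r2 T^*$. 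Iterating this over the blocks $[kT^*,(k+1)T^*]$ and using Borel--Cantelli — the exceptional blocks being summably rare while on each good block $\ln S$ drops by at least $|r|T^*/2$ — forces $\ln S(t)\to-\infty$, that is $S(t)\to0$. Once $S(t)\to0$ we have $X_i(u)\to0$ for every $i$, hence $\bb(S(u)\BY(u))^\top\BY(u)=\sum_i b_i(X_i(u))Y_i(u)\to0$ (each $b_i$ continuous with $b_i(0)=0$), and (ii) follows by Ces\`aro convergence.

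The one-block estimate, and likewise (iii), rest on an ergodic-averaging argument in the spirit of Lemma \ref{lem2.5}. For $\eps>0$ and $T^*=T^*(\eps)$ large: on a block $[t_0,t_0+T^*]$ along which $S$ is small, the Markov property and Proposition \ref{prop3.2} give that, with probability at least $1-\eps$, $\BY$ is uniformly within $\eps$ of the copy of the auxiliary diffusion $\tilde\BY$ issued from $\BY(t_0)$, while the exponential ergodicity \eqref{e2.23} of $\tilde\BY$ (with unique invariant measure $\nu^*$) makes the block average
\[
\frac1{T^*}\int_{t_0}^{t_0+T^*}\Big(\ba^\top\tilde\BY(u)-\tfrac12\tilde\BY(u)^\top\Sigma\tilde\BY(u)\Big)\,du
\]
lie within $\eps$ of $r=\int_\Delta\big(\ba^\top\by-\tfrac12\by^\top\Sigma\by\big)\,\nu^*(d\by)$ with probability at least $1-\eps$. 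Combined with the martingale term of \eqref{e:lnS-ext}, whose block increments are small with high probability by It\^o's isometry and Chebyshev's inequality, this yields the one-block estimate above; and once $S(t)\to0$ the same comparison applies to every late block, so summing over blocks and controlling the mean-zero fluctuations by the martingale strong law of large numbers gives $\tfrac1t\int_0^t\big(\ba^\top\BY(u)-\tfrac12\BY(u)^\top\Sigma\BY(u)\big)\,du\to r$ a.s. With (i), (ii) and \eqref{e:lnS-ext} we conclude $\ln S(t)/t\to r$, so $S(t)\to0$ exponentially fast.

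Finally, for (iv), the bound $X_i(t)\le S(t)$ gives $\limsup_t\ln Y_i(t)/t\le0$ at once. For the matching lower bound, once $S(t)\to0$ the continuity of the $b_i$'s lets us, for each $\eps>0$ and all large $t$, sandwich $X_i(t)$ between the corresponding coordinates of the two linear dispersal systems of the form \eqref{e1.1c} with per-capita growth rates $a_i-\eps$ and $a_i+\eps$ (a comparison argument as in Remark \ref{r:comparison} applies after modifying each $b_i$ outside a neighborhood of $0$, which is harmless because $\BX(t)\to\mathbf{0}$). By Assumption \ref{a:dispersion} these linear systems are irreducible, hence each of their coordinates has almost-sure exponential growth rate equal to the total Lyapunov exponent $r_{\pm\eps}$ of that system (equivalently, $\ln\tilde Y_i(t)/t\to0$ for the associated proportions), and $r_{\pm\eps}\to r$ as $\eps\to0$ — either directly from \eqref{lambda} with a perturbation estimate for $\nu^*$, or from the continuity proved in Appendix \ref{s:robust}. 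Letting $\eps\to0$ gives $\ln X_i(t)/t\to r$ for each $i$, which completes the proof. The main obstacle is step (iii): turning the pathwise closeness of $\BY$ to $\tilde\BY$ into closeness of their long-run time averages, uniformly enough in the starting point to push the Borel--Cantelli argument over blocks.
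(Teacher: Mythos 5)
Your overall decomposition ($\ln X_i=\ln S+\ln Y_i$, identify the ergodic average with $r$, kill the competition term, show $\ln Y_i/t\to0$) matches the paper's, but the step on which everything else rests --- your (i), that $S(t)\to0$ almost surely --- does not go through as written. Your one-block estimate gives that a block is ``good'' with probability at least $1-3\eps$ \emph{conditionally on starting the block with $S<\delta$}; the failure probability of each block is therefore bounded only by the fixed constant $3\eps$, not by a summable sequence, so Borel--Cantelli does not yield that only finitely many blocks are bad. Worse, after a bad block you have no control: $S$ may have left $(0,\delta)$, so the one-block estimate is not even available for the next block, and you have not shown that $S$ enters $(0,\delta)$ in the first place. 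Making $\eps$ block-dependent does not help, because $\delta(\eps)$ and $T^*(\eps)$ degenerate as $\eps\to0$. The paper avoids this entirely: it compares $\BX$ with the linear system \eqref{e:X0} with inflated rates $\check a_i=a_i+\theta$, uses the continuity of $r$ in the coefficients (Proposition \ref{p:cts_dep}) to get $\check r<r/2<0$, observes that for the \emph{linear} system the process $\ln\check S(t)-\ln s$ has an almost surely finite supremum over all time (so from $s_0<\delta e^{-H_\eps}$ the comparison holds forever with probability at least $1-\eps$ and $\limsup_t\ln S(t)/t\le\check r<0$ there), and then invokes the recurrence/transience dichotomy for the nondegenerate diffusion $(\BY,S)$ on $\Delta\times(0,\infty)$, together with Lemma \ref{lem2.1} to exclude escape to infinity, to upgrade this to $S(t)\to0$ a.s.\ from every starting point. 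Some such global argument is needed; the block iteration cannot supply it.

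Two further points. For (iii) you yourself flag the passage from pathwise closeness of $\BY$ to $\tilde\BY$ on blocks to convergence of the long-run time averages as ``the main obstacle,'' and it is indeed left unproved; the paper resolves it by showing that the normalized occupation measures of $(\BY,S)$ are tight and that every weak${}^*$ limit point is an invariant probability measure, hence equal to $\nu^*\times\delta_0^*$, which gives \eqref{e3.1-} directly. For (iv), your sandwich between linear systems with rates $a_i\pm\eps$ outsources exactly the nontrivial content, namely that each coordinate of an irreducible linear system grows at the common rate $r_{\pm\eps}$ (equivalently that $\ln\tilde Y_i(t)/t\to0$); the paper proves this via \eqref{e3.7-}--\eqref{e3.11-}, showing $\beta_i+\rho_i=0$ and then transferring the identity to the nonlinear $Y_i$ by truncating the unbounded functional $\sum_{j\ne i}D_{ji}y_j/y_i$. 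In addition, your comparison is anchored at a random time after which $S$ stays small, which is not a stopping time and needs the same kind of event-by-event care as (i). So the skeleton is right, but the load-bearing steps are either incorrect or missing.
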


\begin{proof}
Let $\theta>0$ and $\check a_i=a_i+\theta$, and
define the process ${\check\BX}^{\mathbf{x}}(t) = ({\check X}_1^{\mathbf{x}}(t),\dots,{\check X}_n^{\mathbf{x}}(t))$ as the solution to
\begin{equation}\label{e:X0}
d {{\check X}_i}(t)=\left({\check X}_i(t)(\check a_i)+\sum_{j=1}^n D_{ji} {{\check X}_j}(t)\right)dt+{\check X}_i(t)dE_i(t), \, i=1,\dots,n
\end{equation}
started at $\mathbf{x} = (x_1,\dots,x_n)\in \R_+^{n}$.
Letting $\check S(t)=\sum\check X_i(t)$ and $\check\BY(t)=\dfrac{\check\BX(t)}{S(t)}$, we have
\begin{equation}\label{e.checkys}
\begin{split}
d\check \BY(t)=&\left(\diag(\check \BY(t))-\check \BY(t)\check \BY^\top(t)\right)\Gamma^\top d\BB(t)\\
&+\BD^\top\check \BY(t)dt+\left(\diag(\check \BY(t))-\check \BY(t)\check \BY^\top(t)\right)(\check\ba-\Sigma \check \BY(t))dt\\
d\ln\check S(t)=&\left(\check\ba^\top\check\BY(t)-\frac12{\check\BY(t)}^\top\Sigma{\check\BY(t)}\right)dt+{\check \BY(t)}^\top \Gamma^\top d\BB(t)
\end{split}
\end{equation}
Let $(\check\BY^{\by}(t), \check S^{\by, s}(t))$ be the solution to \eqref{e.checkys} with initial condition $(\by, s)$. Note that $\check\BY^{\by}(t)$ does not depend on $s$.
First, fix $\by_0\in\Delta$.
We have that
\begin{equation}\label{clambda}
\begin{aligned}
\lim\limits_{t\to\infty}\dfrac1t&\left(\int_0^t\left(\check\ba^\top\check\BY^{\by_0}(u)-\frac12{\check\BY^{\by_0}(u)}^\top\Sigma{\check\BY^{\by_0}(u)}\right)du+\int_0^t{\check \BY^{\by_0}(u)}^\top \Gamma^\top d\BB(u)\right)\\
=&\check r:=\int_{\Delta}\left(\check\ba^\top{\bf y}-\frac12{\bf y}^\top\Sigma{\bf y}\right)\check\nu^*(d{\bf y}), \PP-\text{a.s.,}
\end{aligned}
\end{equation}
where $\check\nu^*$ is the unique invariant probability measure of $(\check\BY(t))_{t\geq0}$.
By the continuous dependence of $ r$ on the coefficients (established in the Proposition \ref{p:cts_dep}), there is $\theta>0$ such that
$\check r<\frac{ r}2<0$.
Let $\delta>0$ such that
$\sup\{-b_i(x):x<\delta, i=1,\dots,n\}<\theta$ (this is possible since the $b_i$'s are continuous and vanish at $0$).
Because $\check r<0$, it follows from \eqref{clambda} that
$$\sup\limits_{t\in[0,\infty)}\left(\int_0^t\left(\check\ba^\top\check\BY^{\by_0}(s)-\frac12{\check\BY^{\by_0}(s)}^\top\Sigma{\check\BY^{\by_0}(s)}\right)ds+\int_0^t{\check \BY^{\by_0}(s)}^\top \Gamma^\top d\BB(s)\right)<\infty\,\,\,\PP-\text{a.s.}.$$
As a result, for any  $\eps>0$, there is an
$H_\eps>0$ satisfying
$$
\PP\left\{\sup\limits_{t\in[0,\infty)}\left(\int_0^t\left(\check\ba^\top\check\BY^{\by_0}(u)-\frac12{\check\BY^{\by_0}(u)}^\top\Sigma{\check\BY^{\by_0}(u)}\right)du+\int_0^t{\check \BY^{\by_0}(u)}^\top \Gamma^\top d\BB(u)\right)<H_\eps\right\}>1-\eps,
$$
which combined with \eqref{e.checkys} implies that
\begin{equation}\label{e:delta}
\PP\left\{\sup\limits_{t\in[0,\infty)}\check S^{\by_0,s_0}(t)<\delta\right\}>1-\eps\,\text{ if }\,s_0<\delta\exp(-H_\eps).
\end{equation}
Then, a comparison argument shows (see Remark \ref{r:comparison}) that for $\bx_0=s_0\by_0\in \R_+^{n}$ and $i=1,\dots,n$
\begin{equation}\label{e:comparison}
\PP\left\{X^{\mathbf{x_0}}_i(t)\leq {\check X}^{\mathbf{x_0}}_i(t),\, t\in[0,\xi^{\bx_0})\right\}=1
\end{equation}
where $\xi^{\bx_0}=\inf\{t\geq0: \sum_{i=1}^n{\check X}^{\mathbf{x_0}}_i(t)\geq\delta\}$.
By virtue of \eqref{e:delta}, $\PP\{\xi^{\bx_0}=\infty\}>1-\eps$ if $s_0<\delta\exp(-H_\eps)$.
Using \eqref{clambda} and  \eqref{e:comparison} yields that \begin{equation}\label{e:evans0}
\PP\left\{\limsup_{t\to\infty}\frac{\ln S^{\by_0,s_0}}{t}\leq \check r<0\right\}>1-\eps\text{ if } s<\delta\exp(-H_\eps).
\end{equation}
Thus, the process $(\BY(t), S(t))_{t\geq0}$ is not a recurrent diffusion process in $\Delta\times(0,\infty)$.
Hence, it must be transient with probability 1, that is, for any compact $K\in(0,\infty)$ and any initial value $(\by,s)\in\Delta\times(0,\infty)$ we have
\begin{equation}\label{e:tran}
\PP\left\{\lim\limits_{t\to\infty}\1_{\{S^{\by,s}(t)\in K\}}=0\right\}=1.
\end{equation}
In view of Lemma \ref{lem2.1},
\begin{equation}\label{e:infty}
\PP\left\{\lim\limits_{t\to\infty}S^{\by,s}(t)=\infty\right\}=0.
\end{equation}
It follows from \eqref{e:tran} and \eqref{e:infty}
 that $\PP\left\{\lim\limits_{t\to\infty}S^{\by,s}(t)=0\right\}=1$ for any $(\by,s)\in\Delta\times(0,\infty)$.
Moreover, since $(\tilde\BY(t))_{\{t\geq0\}}$ has a unique invariant probability measure $\nu^*$,
on the boundary $\Delta\times\{0\}$, $(\BY(t), S(t))$ has a unique invariant probability measure
$\nu^*\times\delta_0^*$, where $\delta_0^*$ is the Dirac measure concentrated on $\{0\}$.
Fix  $(\by,s)\in \Delta\times(0,\infty)$, and define the normalized occupation measures,
$$\Pi_t(\cdot)=\dfrac1t\int_0^t\1_{\{(\BY^{\by,s}(u),S^{\by,s}(u))\in\cdot\}}du.$$
Since $\PP\left\{\lim\limits_{t\to\infty}S^{\by,s}(t)=0\right\}=1$,
 the family $\left\{\Pi_k(\cdot),k\in\N\right\}$
is tight in the space $\Delta\times[0,\infty)$ for almost every $\omega$.
In view of the proofs of \cite[Theorem 4.2]{EHS15} or \cite[Theorems 4, 5]{SBA11},
the set of $\text{weak}^*$ limit points of $\{\Pi_k,k\in\N\}$ is a nonempty set of invariant probability measures of the process $(\BY(t), S(t))$.
As pointed out above, the process $(\BY(t), S(t))$ has only one invariant probability measure, namely, $\nu^*\times\delta_0^*$.
Thus, for almost every $\omega\in\Omega$, $\{\Pi_k(\cdot),k\in\N\}$ converges weakly to $\nu^*\times\delta_0^*$
as $k\to\infty$.
As a result, for any bounded continuous function $g(\cdot,\cdot): \Delta\times[0,\infty)\mapsto\R$
we have
$\lim\limits_{k\to\infty}\frac1k\int_0^kg(\BY^{\by,s}(t),S^{\by,s}(t))dt=\int_\Delta g(\by',0)\nu^*(d\by')\,\,\PP\text{-a.s}.$
Since $g(\cdot,\cdot)$ is bounded, we easily obtain
\begin{equation}\label{e3.0-}
\lim\limits_{T\to\infty}\frac1T\int_0^Tg(\BY^{\by,s}(t),S^{\by,s}(t))dt=\int_\Delta g(\by',0)\nu^*(d\by')\,\,\PP\text{-a.s}.
\end{equation}
Consequently,
\begin{equation}\label{e3.1-}
\lim\limits_{T\to\infty}\frac1T\int_0^T\left(\ba^\top\BY^{\by,s}(t)-\frac12{\BY^{\by,s}(t)}^\top\Sigma{\BY^{\by,s}(t)}\right)dt= r\,\,\PP\text{-a.s}
\end{equation}
Since $\PP\left\{\lim\limits_{t\to\infty}S^{\by,s}(t)=0\right\}=1$ and $b_i(0)=0,i=1,\dots,n$, we have by Dominated Convergence that
\begin{equation}\label{e3.2-}
\lim\limits_{T\to\infty}\frac1T\int_0^{T} {\BY^{\by, s}(t)}^\top\bb(S^{\by, s}(t)\BY^{\by, s}(t))dt=0\,\,\PP\text{-a.s.}
\end{equation}
Applying the strong law of large numbers for martingales to the process $(M^{\by,s}(t))_{t\geq 0}$ defined by \eqref{e2.3a}, we deduce
\begin{equation}\label{e3.3-}
\lim\limits_{T\to\infty}\frac{M^{\by,s}(T)}T=0\,\,\PP\text{-a.s.}
\end{equation}

Note that
\begin{equation}\label{e3.4-}
  \begin{aligned}
\frac{\ln S^{\by, s}(T)}{T}=&\frac{\ln s}T+\frac{M^{\by, s}(T)}T-\frac1T\int_0^{T} {\BY^{\by, s}(t)}^\top\bb(S^{\by, s}(t)\BY^{\by, s}(t))dt\\
&+\dfrac1T\int_0^{T}\left(\ba^\top{\BY^{\by, s}(t)}-\frac12{\BY^{\by, s}(t)}^\top\Sigma{\BY^{\by, s}(t)}\right)dt
    \end{aligned}
\end{equation}
Applying \eqref{e3.1-}, \eqref{e3.2-}, and \eqref{e3.3-} to \eqref{e3.4-}, we
obtain
\begin{equation}\label{e3.5-}
\lim\limits_{T\to\infty}
\frac{\ln S^{\by, s}(T)}{T}= r,\,\,\PP\text{-a.s.}
\end{equation}
In light of \eqref{e3.5-}, to derive
$\PP\left\{\lim\limits_{T\to\infty}
\frac{\ln X^\bx_i(T)}{T}= r\right\}=1$, it suffices to show
$\PP\left\{\lim\limits_{T\to\infty}
\frac{\ln Y_i^{\by, s}(T)}{T}=0\right\}\!=1$ for each $i=1,\dots,n$.
In view of It\^o's lemma,
\begin{equation}\label{e3.6-}
\begin{split}
\dfrac{\ln Y_i^{\by, s}(T)}T=&\dfrac{\ln y_i}T+\dfrac1T\int_0^T\left(a_i-\sum_{j=1}^na_jY_j^{\by, s}(t)-D_{ii}-\frac{\sigma_{ii}}2+\sum_{j,k=1}^n\frac{\sigma_{kj}}2Y_k^{\by, s}(t)Y_j^{\by, s}(t))\right)dt\\
&+\dfrac1T\int_0^T\left(-b_i(S^{\by, s}(t)Y_i^{\by, s}(t))+\sum_{j=1}^nY_j^{\by, s}(t)b_j(S^{\by, s}(t)Y_j^{\by, s}(t))\right)dt\\
&+ \dfrac1T\int_0^T\left(\sum_{j=1,j\ne i}^{n}D_{ji}\dfrac{Y_j^{\by, s}(t)}{Y_i^{\by, s}(t)}\right)dt +\dfrac1T\int_0^T\left[dE_i(t)-\sum_{j=1}^n Y_j^{\by, s}(t)dE_j(t)\right],
\end{split}
\end{equation}
and
\begin{equation}\label{e3.7-}
\begin{split}
\dfrac{\ln\tilde Y_i^\by(T)}T=&\dfrac{\ln y_i}T+\dfrac1T\int_0^T\left(a_i-\sum_{j=1}^na_j\tilde Y_j^\by(t)-D_{ii}-\frac{\sigma_{ii}}2+\sum_{j,k=1}^n\frac{\sigma_{kj}}2\tilde Y_k^\by(t)\tilde Y_j^\by(t))\right)dt\\
& +\dfrac1T\int_0^T\left(\sum_{j=1,j\ne i}^{n}D_{ji}\dfrac{\tilde Y_j^\by(t)}{\tilde Y_i^\by(t)}\right)dt +\dfrac1T\int_0^T\left[dE_i(t)-\sum_{j=1}^n\tilde Y_j^\by(t)dE_j(t)\right].
\end{split}
\end{equation}
By the strong laws of large numbers for martingales,
\begin{equation}\label{e3.8-}
\lim\limits_{T\to\infty}\dfrac1T\int_0^T\left[dE_i(t)-\sum_{j=1}^n\tilde Y_j^\by(t)dE_j(t)\right]=0,\,\,\PP\text{-a.s.}
\end{equation}
Let $G_i=\sup\limits_{\by\in\Delta}\left\{\left|a_i-\sum_{j=1}^na_jy_j-D_{ii}-\frac{\sigma_{ii}}2+\sum_{j,k=1}^n\frac{\sigma_{kj}}2y_ky_j\right|\right\}<\infty.$
As a result of \eqref{e3.7-} and \eqref{e3.8-} and the fact that $\limsup\limits_{T\to\infty}\dfrac{\ln\tilde Y_i^\by(T)}T\leq0$ almost surely, we obtain
\begin{equation}\label{e3.9-}
\limsup\limits_{T\to\infty}\dfrac1T\int_0^T\left(\sum_{j=1,j\ne i}^{n}D_{ji}\dfrac{\tilde Y_j^\by(t)}{\tilde Y_i^\by(t)}\right)dt\leq G_i,\,\,\PP\text{-a.s.}
\end{equation}
For any $k>0$, it follows from \eqref{e3.8-} and the strong law of large numbers that
$$
\int_\Delta k\wedge\left(\sum_{j=1,j\ne i}^{n}D_{ji}\dfrac{y_j}{y_i}\right)\nu^*(d\by)
=\lim\limits_{T\to\infty}\dfrac1T\int_0^Tk\wedge\left(\sum_{j=1,j\ne i}^{n}D_{ji}\dfrac{\tilde Y_j^\by(t)}{\tilde Y_i^\by(t)}\right)dt \leq G_i
$$
Letting $k\to\infty$ we have
$$\rho_i:=\int_\Delta \sum_{j=1,j\ne i}^{n}D_{ji}\dfrac{y_j}{y_i}\nu^*(d\by)\leq G_i,$$
which implies
\begin{equation}\label{e3.10-}
\lim\limits_{T\to\infty}\dfrac1T\int_0^T\left(\sum_{j=1,j\ne i}^{n}D_{ji}\dfrac{\tilde Y_j^\by(t)}{\tilde Y_i^\by(t)}\right)dt=\rho_i.
\end{equation}
Using \eqref{e3.8-}, \eqref{e3.10-}, and applying the strong law of large numbers for the process $(\tilde\BY(t))_{t\geq0}$, we arrive at
\begin{equation}\label{e3.11-}
\begin{split}
\lim\limits_{T\to\infty}\dfrac{\ln\tilde Y_i^\by(T)}T
=\beta_i+\rho_i\leq0,\,\,\PP\text{}{-a.s.,}
\end{split}
\end{equation}
where $$\beta_i:=\int_\Delta\left(a_i-\sum_{j=1}^na_jy_j-D_{ii}-\frac{\sigma_{ii}}2+\sum_{j,k=1}^n\frac{\sigma_{kj}}2y_ky_j\right)\nu^*(d\by).$$
If $\beta_i+\rho_i<0$, then $\tilde Y_i^\by(T)\to 0$  almost surely as $T\to\infty$,
which contradicts
the fact
that $\tilde\BY^\by(T)$ converges weakly to $\nu^*$
that is concentrated on $\Delta^\circ$.
As a result, $\beta_i+\rho_i=0$.
For any $\theta>0$, there is $k_\theta>0$ such that
$$\int_\Delta k_\theta\wedge\left(\sum_{j=1,j\ne i}^{n}D_{ji}\dfrac{y_j}{y_i}\right)\nu^*(d\by)>\rho_i-\theta.$$
Using \eqref{e3.0-}, we have with probability 1 that
\begin{equation}\label{e3.12-}
\liminf\limits_{T\to\infty}\dfrac1T\int_0^T\left(\sum_{j=1,j\ne i}^{n}D_{ji}\dfrac{Y_j^{\by, s}(t)}{Y_i^{\by, s}(t)}\right)dt
\geq
\lim\limits_{T\to\infty}\dfrac1T\int_0^Tk_\theta\wedge\left(\sum_{j=1,j\ne i}^{n}D_{ji}\dfrac{Y_j^{\by, s}(t)}{Y_i^{\by, s}(t)}\right)dt\geq\rho_i-\theta.
\end{equation}
and
\begin{equation}\label{e3.13-}
\lim\limits_{T\to\infty}\dfrac1T\int_0^T\left(a_i-\sum_{j=1}^na_jY_j^{\by, s}(t)-D_{ii}-\frac{\sigma_{ii}}2+\sum_{j,k=1}^n\frac{\sigma_{kj}}2Y_k^{\by, s}(t)Y_j^{\by, s}(t))\right)dt=\beta_i.
\end{equation}
Applying \eqref{e3.12-}, \eqref{e3.13-}, and the fact $\PP\left\{\lim\limits_{T\to\infty}S^{\by,s}(T)=0\right\}=1$  to \eqref{e3.6-}, we obtain that
$$\liminf\limits_{T\to\infty}\dfrac{\ln Y_i^{\by, s}(T)}T\geq\beta_i+\rho_i-\theta=-\theta,\,\,\PP\text{-a.s.}$$
Since it  holds for any $\theta>0$, we have
$$\lim\limits_{T\to\infty}\dfrac{\ln Y_i^{\by, s}(T)}T=0,\,\,\PP\text{-a.s.}$$
The above equality combined with \eqref{e3.5-} and
$X_i(T)=Y_i(T)S(T)$ yield the desired result.
\end{proof}

\section{Degenerate
diffusion
in $\R^n$} \label{sec:degenerate}

If the correlation matrix $\Sigma$ is degenerate,
the diffusion $\tilde\BY(t)$ from \eqref{eq.by} still has an invariant probability measure $\nu^*$  since it is a Feller-Markov process in a compact set. Moreover, $\nu^*(\Delta^\circ)=1$ because the property that
$\PP\left\{\tilde\BY(t)\in\Delta^\circ,\, t>0\right\}=1$ is satisfied as long as Assumption \ref{a:dispersion} holds, that is, the dispersion matrix $(D_{ij})$ is  irreducible. It is
readily seen
that the following is true.

\begin{thm}\label{t:extinction_degenerate}
Assume that $\tilde\BY(t)$ has a \textbf{unique} invariant probability measure $\nu^*$. Define $ r$ by \eqref{lambda}. Suppose that $ r<0$.
Then for any $i=1,\dots,n$ and any $\mathbf{x} = (x_1,\dots,x_n)\in \R_+^{n}$
\begin{equation}
\PP\left\{\lim_{t\to\infty}\frac{\ln {X}_i^{\mathbf{x}}(t)}{t}=  r\right\}=1.
\end{equation}
In particular, for any $i=1,\dots,n$ and any $\mathbf{x} = (x_1,\dots,x_n)\in \R_+^{n}$
\[
\PP\left\{\lim_{t\to\infty}X^{\mathbf{x}}_i(t)=0\right\}=1.
\]
\end{thm}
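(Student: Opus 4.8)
The plan is to repeat, nearly line for line, the proof of the extinction theorem in Appendix~\ref{sec:general-}, isolating the one place where nondegeneracy (Assumption~\ref{a:nonsingular}) played a genuine role and replacing it. The skeleton is unchanged: (i)~show $S^{\by,s}(t)\to0$ almost surely for every initial condition $(\by,s)\in\Delta\times(0,\infty)$; (ii)~deduce from the \emph{assumed} unique ergodicity of $\tilde\BY$ that the normalized occupation measures of $(\BY(t),S(t))$ converge weakly to $\nu^*\times\delta_0$; (iii)~feed this into the It\^o decompositions \eqref{e3.4-}, \eqref{e3.6-} to get $\ln X_i^\bx(t)/t=\ln Y_i(t)/t+\ln S(t)/t\to r$. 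Existence and positivity (Proposition~\ref{e:solutions}) carry over with no change, since they rest on the comparison of Remark~\ref{r:comparison} (cooperativity, not nondegeneracy) together with Assumption~\ref{a:dispersion}; in particular $S^{\by,s}(t)>0$ for all $t$ as it solves a scalar linear-in-$S$ equation, so $\BY=\BX/S$ is well defined, and $(\BY,S)$ remains Feller on $\Delta\times[0,\infty)$ with $\Delta\times\{0\}$ invariant and carrying the dynamics of $\tilde\BY$.

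Next I would set up the linear comparison process exactly as in \eqref{e:X0}--\eqref{e.checkys}: fix $\theta\in(0,-r/2)$ and put $\check a_i=a_i+\theta$. A pleasant simplification valid here is that on the simplex the drift of $\check\BY$ coincides with that of $\tilde\BY$, because the correction $\theta(\diag(\by)-\by\by^\top)\mathbf{1}$ vanishes on $\Delta$; hence $\check\BY$ is equal in law to $\tilde\BY$, shares its (unique) invariant measure $\nu^*$, and \eqref{clambda} gives $\check r=r+\theta<0$ directly, with no appeal to continuous dependence of $r$ on the coefficients. Picking $\delta>0$ with $\sup\{-b_i(x):x<\delta\}<\theta$ and using $X_i^{\bx_0}(t)\le\check X_i^{\bx_0}(t)$ up to the exit time of $\check S$ from the $\delta$-ball, the estimate \eqref{e:evans0} follows verbatim: for every $\eps>0$ there is $s^*_\eps>0$ with $\PP\{S^{\by_0,s_0}(t)\to0\}\ge1-\eps$ whenever $s_0\le s^*_\eps$, uniformly in $\by_0\in\Delta$ (comparison of the cooperative flow with itself upgrades this from $s_0=s^*_\eps$ to all $s_0\le s^*_\eps$). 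When the $b_i$ are non-negative — the case most used in practice, cf.\ the examples following Assumption~\ref{a:competition} — one even has $a_i-b_i(x)\le a_i+\theta$ for all $x$, hence a \emph{global} comparison $X_i(t)\le\check X_i(t)$, and then $\limsup_t\ln S(t)/t\le\check r<0$ directly from the ergodic theorem for the linear system: $S(t)\to0$ a.s.\ from every start, completing step (i).

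The only genuinely new point — and the main obstacle — is step (i) for general, possibly sign-changing, $b_i$: in Appendix~\ref{sec:general-} the passage from \eqref{e:evans0} to ``$S(t)\to0$ a.s.'' used that $(\BY,S)$ is not recurrent in $\Delta\times(0,\infty)$, read off from strictly positive smooth transition densities, which is unavailable when $\Sigma$ is singular. I would recover this by an accessibility argument. Because the drift of $\ln S$ is strongly negative for large $S$ by Assumption~\ref{a:competition} and $\tilde\BY$ is irreducible by Assumption~\ref{a:dispersion}, a support-theorem/controllability argument — of the flavour needed for Theorem~\ref{t:pers_deg} — shows the escape region $\Delta\times(0,s^*_\eps)$ is accessible from every $(\by,s)$; combining accessibility with \eqref{e:evans0} and the strong Markov property, the process from any start reaches the escape region and then tends to $0$ with probability $\ge1-\eps$, and letting $\eps\downarrow0$ yields $\PP\{S^{\by,s}(t)\to0\}=1$. (Phrased via invariant measures: by Lemma~\ref{lem2.1} the occupation measures are a.s.\ tight and no invariant probability charges $\{s=\infty\}$; accessibility together with \eqref{e:evans0} excludes any invariant probability measure on $\Delta\times(0,\infty)$, leaving only $\nu^*\times\delta_0$ by the uniqueness hypothesis, and unique ergodicity then pins down the occupation measures.) With $S(t)\to0$ a.s.\ in hand, steps (ii)--(iii) are a transcription of Appendix~\ref{sec:general-}: the competition term \eqref{e3.2-} vanishes by dominated convergence since $b_i(0)=0$, the martingale terms vanish by the strong law of large numbers for martingales (which uses only boundedness of $\BY$ on $\Delta$), and the bookkeeping \eqref{e3.6-}--\eqref{e3.13-} controlling $\ln Y_i^{\by,s}(t)/t$ via the unique ergodicity of $\tilde\BY$ goes through unchanged, giving $\ln Y_i(t)/t\to0$ and hence the claim.
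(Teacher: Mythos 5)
Your skeleton is the one the paper intends: Theorem \ref{t:extinction_degenerate} is stated in Appendix \ref{sec:degenerate} with no argument beyond ``it is readily seen that the following is true,'' the intended proof being a transcription of Appendix \ref{sec:general-}. Two of your observations genuinely improve on that transcription. First, the remark that the shift $\check a_i=a_i+\theta$ does not change the drift of the proportion process on $\Delta$ (because $(\diag(\by)-\by\by^\top)\mathbf{1}=0$ there), so that $\check\BY$ equals $\tilde\BY$ in law and $\check r=r+\theta$ exactly, is correct and is in fact needed here: the paper's appeal to Proposition \ref{p:cts_dep} is unavailable in the degenerate setting, since that proposition's proof rests on the exponential ergodicity \eqref{e2.23}, which comes from the nondegenerate petite-set Lemma \ref{petite}. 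Second, you correctly isolate the only step of Appendix \ref{sec:general-} that truly uses Assumption \ref{a:nonsingular}: the recurrence/transience dichotomy that upgrades \eqref{e:evans0} --- a statement only about initial conditions with $s_0$ small --- to $\PP\{S^{\by,s}(t)\to0\}=1$ for every $(\by,s)\in\Delta\times(0,\infty)$. Your global-comparison argument when the $b_i$ are nonnegative closes that step correctly in that subcase.

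The gap is the proposed repair for general $b_i$. The accessibility of $\Delta\times(0,s^*_\eps)$ from every interior point is asserted, not proved, and it does not follow from the theorem's hypotheses: unique ergodicity of the boundary process $\tilde\BY$ and irreducibility of $D$ constrain only the dynamics on $\Delta\times\{0\}$, and say nothing about the reachable set of the interior process $(\BY,S)$, whose drift contains the competition terms $\bb(S\BY)$ absent from \eqref{eq.by}. In fact no argument of this kind can exist at the stated level of generality. Assumption \ref{a:competition} allows $b_i$ to be negative on a bounded set, and $\Gamma=0$ is a legitimate degenerate covariance; then the system is a deterministic cooperative ODE whose drift can have a stable interior equilibrium coexisting with the stable origin. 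Already for $n=1$, take $a_1=-1$ and $b_1(x)=x^2-10x$: Assumption \ref{a:competition} holds, $r=-1<0$, yet solutions started above the unstable equilibrium $5-\sqrt{24}$ converge to the stable equilibrium $5+\sqrt{24}$, not to $0$ (and the same phenomenon persists for $n=2$ with irreducible $D$). In that situation the escape region is not accessible from its complement, there \emph{is} an invariant probability measure on $\Delta\times(0,\infty)$, and the conclusion of the theorem itself fails --- so your parenthetical ``exclude any interior invariant measure'' route collapses for the same reason. To close the argument one must add a hypothesis: either $b_i\ge0$ (so your global comparison applies), or an explicit accessibility/irreducibility assumption on $(\BY,S)$ of the kind imposed in Theorem \ref{t:pers_deg} and verified for $n=2$ in Proposition \ref{prop5.1}. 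This is as much a defect of the theorem's statement (and of the paper's ``readily seen'') as of your write-up, but the accessibility claim as you have written it cannot stand.
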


\begin{rmk} {\rm
The Markov process $\left\{\tilde\BY(t), t\geq0\right\}$
has a unique invariant probability measure if it is irreducible.
Moreover, since $\PP\left\{\tilde\BY^\by(t)>0\,~\text{for all}~ t>0\right\}=1$ for any $\by\in\Delta$,
we
 need only check its irreducibility in $\Delta^\circ$.
To prove that the diffusion $\left\{\tilde\BY(t), t\geq0\right\}$
is irreducible in $\Delta^\circ$, we pursue the following approach:
\begin{itemize}
\item First, we show that the process $\left\{\tilde\BY(t), t\geq0\right\}$
 verifies H\"ormander's condition. As a result, the process $\left\{\tilde\BY(t), t\geq0\right\}$ has a smooth density function for any $t>0$; see e.g., \cite{LB}.
\item Next, we show that there is an open set $N\subset\Delta^\circ$ such that for any open set $N_0\subset N$, and $\by\in\Delta^\circ$, there is a
$t_0>0$ such that $\PP\left\{\tilde\BY^\by(t_0)\in N_0\right\}>0$. This claim is usually proved by analyzing the control systems corresponding to the diffusion and using the support theorem. We refer to \cite{WK, LB} for more details. This then shows that the process $\left\{\tilde\BY(t), t\geq0\right\}$ is irreducible in  $\Delta^\circ$.
\end{itemize}
}
\end{rmk}

Now we consider the case $ r>0$. We still assume that $\{\tilde \BY(t): t\geq0\}$  has a unique invariant probability measure.
In order to obtain Theorem \ref{t:survival} for our degenerate process,
we have to show that there is a sufficiently large $T>0$ such that the Markov chain $(\BY(kT),S(kT))_{k\in\N}$ is irreducible and aperiodic and every compact subset of $\Delta^\circ\times(0,\infty)$ is petite for this Markov chain.
Note that if every compact subset of $\Delta^\circ\times(0,\infty)$ is petite with respect to $(\BY(kT),S(kT))_{k\in\N}$, then
any compact subset of $\Delta\times(0,\infty)$ is petite with respect to $(\BY(kT),S(kT))_{k\in\N}$
by the arguments in the proof of Lemma \ref{petite}.

Sufficient conditions for the above properties can be obtained by
verifying the well-known H\"{o}rmander condition as well as
investigating the control systems associated with the diffusion \eqref{e4.1}.
Once we have
the Markov chain $(\BY(kT),S(kT))_{k\in\N}$
being irreducible and aperiodic,  and every compact subset of $\Delta^\circ\times(0,\infty)$ being petite for sufficiently large $T$,
we can follow the steps from Section \ref{sec:+}
to obtain the following result.

\begin{thm}\label{t:survival_degenerate}
Assume that $\tilde\BY(t)$ has a \textbf{unique} invariant probability measure $\nu^*$. Define $ r$ by \eqref{lambda}. Suppose that Assumption \ref{a:dispersion} holds and that $ r>0$.
Assume further that there is a sufficiently large $T>0$ such that the Markov chain $(\BY(kT),S(kT))_{k\in\N}$ is irreducible and aperiodic, and
that every compact set in $\Delta^\circ\times(0,\infty)$ is petite for this Markov chain.

The process $\BX(t) = (X_1(t),\dots,X_n(t))_{t\geq 0}$ has a unique invariant probability measure $\pi$ on $\R^{n,\circ}_+$ that is absolutely continuous with respect to the Lebesgue measure and for any $q^*>0$,
\begin{equation}
\lim\limits_{t\to\infty} t^{q^*}\|P_\BX(t, \mathbf{x}, \cdot)-\pi(\cdot)\|_{\text{TV}}=0, \;\mathbf{x}\in\R^{n,\circ}_+,
\end{equation}
where $\|\cdot,\cdot\|_{\text{TV}}$ is the total variation norm and $P_\BX(t,\mathbf{x},\cdot)$ is the transition probability of $(\BX(t))_{t\geq 0}$. Moreover, for any initial value $\mathbf{x}\in\R^{n}_+\setminus\{\mathbf{0}\}$ and any $\pi$-integrable function $f$, we have
\begin{equation}
\PP\left\{\lim\limits_{T\to\infty}\dfrac1T\int_0^Tf\left(\BX^{\mathbf{x}}(t)\right)dt=\int_{\R_+^{n,\circ}}f(\mathbf{u})\pi(d\mathbf{u})\right\}=1.
\end{equation}
\end{thm}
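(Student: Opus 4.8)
The proof follows the argument of Appendix \ref{sec:+}; we indicate the two places where the non-degeneracy Assumption \ref{a:nonsingular} was used and how to replace them by the standing hypotheses.

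First, most of Appendix \ref{sec:+} is insensitive to degeneracy. Proposition \ref{e:solutions} (pathwise uniqueness, positivity, and the Feller property of $\BX(t)$), Lemma \ref{lem2.1} (the bound $\E S^{\by,s}(t)\le e^{-\gamma_b t}s+K_1$), Proposition \ref{prop3.2} (continuity of $(\BY,S)$ in the initial datum near the face $s=0$), Lemma \ref{lem2.2} and Lemma \ref{lem2.4} use only Assumptions \ref{a:competition} and \ref{a:dispersion}, the comparison theorems of Remark \ref{r:comparison}, and It\^o's formula and isometry; none of them uses invertibility of $\Sigma$, so they carry over verbatim, and in particular $\PP\{\BY^{\by,s}(t)\in\Delta^\circ\}=1$ still holds. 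We also record $\nu^*(\Delta^\circ)=1$, which (as noted before Theorem \ref{t:extinction_degenerate}) needs only Assumption \ref{a:dispersion}. Lemma \ref{petite} is the first genuinely non-degenerate step — there the smooth strictly positive transition density was produced from uniform ellipticity — and its conclusions, namely that the skeleton chain $(\BY(kT),S(kT))_{k\in\N}$ is irreducible and aperiodic and that compact subsets of $\Delta^\circ\times(0,\infty)$ are petite, are exactly what we have assumed; as in the proof of Lemma \ref{petite}, this also gives petiteness of compacts in $\Delta\times(0,\infty)$.

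The one substantive change is Lemma \ref{lem2.5}, whose proof invoked the exponential ergodicity estimate \eqref{e2.23} for $\tilde\BY(t)$, obtained from petiteness of $\Delta$ and \cite[Theorem 6.1]{MT}. Such an estimate is false for a general degenerate diffusion, but it is more than what is needed: \eqref{e2.23} entered only through the occupation-time bounds \eqref{e2.25} and \eqref{e2.25a}, i.e. through a law of large numbers for $\frac1T\int_0^T\big(\ba^\top\tilde\BY^\by(t)-\frac12{\tilde\BY^\by(t)}^\top\Sigma{\tilde\BY^\by(t)}\big)\,dt$, uniform in $\by\in\Delta$, together with the martingale bound \eqref{e2.23b}, which is insensitive to degeneracy. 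Since $\tilde\BY(t)$ is a Feller process on the compact simplex $\Delta$ with a \emph{unique} invariant probability measure $\nu^*$, it is uniquely ergodic, so the mean ergodic theorem gives the uniform Ces\`aro convergence $\sup_{\by\in\Delta}\big|\frac1T\int_0^T (P_tg)(\by)\,dt-\int_\Delta g\,d\nu^*\big|\to 0$ for every $g\in C(\Delta)$; applying this iteratively (to $g$ and to the first two moments of the time average) yields $\sup_{\by\in\Delta}\mathrm{Var}_\by\big(\frac1T\int_0^T g(\tilde\BY^\by(t))\,dt\big)\to 0$, whence \eqref{e2.25} follows by Chebyshev's inequality, with \eqref{e2.25a} unchanged. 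With these inputs the proofs of Lemma \ref{lem2.5} and of Proposition \ref{prop2.3} go through word for word.

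Finally, with Proposition \ref{prop2.3} available, the Lyapunov function $V(s)=s+1+[\ln s]_-^2$, and its refinements $V_q(s)=s+1+[\ln s]_-^{1+q}$, satisfy the drift inequality \eqref{e0.6} exactly as in the proof of Theorem \ref{thm2.2}; Theorem \ref{polyrate} then applies to the skeleton chain $(\BY(kT^*),S(kT^*))_{k\in\N}$ using the assumed irreducibility, aperiodicity and petiteness, producing a unique invariant probability measure $\pi^*$ and the polynomial rate. The tightness argument promoting $\pi^*$ to an invariant measure of the full process $(\BY(t),S(t))_{t\ge0}$, the passage to arbitrary polynomial rate via $[\ln s]_-^{1+q}$, the Harris-recurrence and absolute-continuity conclusions through \cite[Theorems 20.17, 20.21]{K02}, and the transfer from $(\BY,S)$ back to $\BX$ via $X_i(t)=Y_i(t)S(t)$ and $\PP\{\BY^{\by,s}(t)\in\Delta^\circ\}=1$ are identical to the end of the proof of Theorem \ref{thm2.2} and to the deduction of Theorem \ref{t:survival} from it. The only non-routine point is the adaptation of Lemma \ref{lem2.5}: one must extract a \emph{uniform} (in the initial point $\by$) law of large numbers for the additive functional of $\tilde\BY$ from unique ergodicity alone, without the quantitative mixing rate \eqref{e2.23} that is unavailable here; the key observation is that unique ergodicity of a Feller process on a compact space is precisely enough for this, while the martingale terms are controlled, as before, by It\^o's isometry.
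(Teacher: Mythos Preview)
Your proposal follows the paper's own approach---repeat Appendix \ref{sec:+} with the assumed irreducibility, aperiodicity, and petiteness standing in for Lemma \ref{petite}---and is correct. In fact you are more careful than the paper's one-line sketch: you rightly flag that Lemma \ref{lem2.5} invoked exponential ergodicity of $\tilde\BY$ via petiteness of $\Delta$, which is \emph{not} among the standing hypotheses here, and your substitute---a uniform-in-$\by$ second-moment bound coming from unique ergodicity of a Feller process on a compact space---does work (write the second moment of the centered time average via the Markov property as $\frac{2}{T^2}\int_0^T(T-s)P_s(\tilde g\cdot A_{T-s}\tilde g)(\by)\,ds$ with $A_u\tilde g:=\frac1u\int_0^u P_v\tilde g\,dv\to 0$ uniformly on $\Delta$, and bound trivially).

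One small caveat worth recording: in Theorem \ref{thm2.2} the absolute-continuity of the invariant measure came from \cite[Theorem 20.17]{K02} applied with strictly positive transition densities, a consequence of non-degeneracy. Under the abstract hypotheses of Theorem \ref{t:survival_degenerate} alone this step is not automatic; in practice one obtains it from H\"{o}rmander's condition, which is exactly what the paper verifies in the $n=2$ case study but is implicit rather than listed among the hypotheses of the general statement. This is a feature of the theorem as stated rather than a defect of your argument.
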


\subsection{Case study: $n=2$}
In what follows,
we show that
 if $ r>0$,  there is a sufficiently large $T>0$ such that the Markov chain $(\BY(kT), S(kT))_{k\in\N}$ is irreducible and aperiodic, and that every compact set in $\Delta^\circ\times(0,\infty)$ is petite for the Markov chain.

For simplicity of presentation,
we restrict ourselves
to the $n=2$ case,
and assume that $b_i(x)=b_ix,x\geq0,  i=1,2$ for some $b_1,b_2>0$.
As a result, \eqref{e4.0} becomes
\begin{equation}\label{e5.1}
\begin{cases}
dX_1(t)=\big(X_1(t)(a_1-b_1 X_1(t))-\alpha X_1(t)+\beta X_2(t)\big)dt+\sigma_1X_1(t)dB(t)  \\
dX_2(t)=\big(X_2(t)(a_2-b_2 X_2(t))+\alpha X_1(t)-\beta X_2(t)\big)dt+\sigma_2X_2(t)dB(t),
\end{cases}
\end{equation}
where $\sigma_1$, $\sigma_2$ are non-zero constants and $(B(t))_{t\geq 0}$ is a one dimensional Brownian motion.

Setting $S(t)=X_1(t)+X_2(t)$ and $Y_i(t)=X_i(t)/S(t)$, $i=1,2$, we have from
It\^o's Lemma,
\begin{equation}\label{e5.1a}
\begin{split}
dY_i(t)=&Y_i(t)\left(a_i-\sum_{j=1}^2a_jY_j-b_iS(t)Y_i(t)+S(t)\sum_{j=1}^2b_jY_j^2(t))\right)dt+(-1)^i\left(\alpha Y_1(t)-\beta Y_2(t)\right)dt\\
&+Y_i(t)\left(\sum_{j,k=1}^2\sigma_{k}\sigma_jY_k(t)Y_j(t))-\sum_{j=1}^2\sigma_i\sigma_{j}Y_j(t)\right)dt  +(-1)^i(\sigma_2-\sigma_1)Y_1(t)Y_2(t)dB(t)\\
dS(t)=&S(t)\left(\sum_{i=1}^2(a_iY_i(t)-Y_i(t)b_iS(t)Y_i(t)\right)dt+S(t)(\sigma_1Y_1(t)+\sigma_2Y_2(t))dB(t).
\end{split}
\end{equation}
We
use the process $(Y_1(t), Y_2(t), S(t))_{t\geq0}$ to construct a Lyapunov function for a suitable skeleton
$(Y_1(kT^*), Y_2(kT^*), S(kT^*))_{k\in\N}$ as we have done in Section \ref{sec:+}.
However, to simplify the computations when verifying the hypotheses of Theorems \eqref{t:extinction_degenerate} and \eqref{t:survival_degenerate},
instead of working with $(Y_1(t), Y_2(t), S(t))$, we treat the system $(Z(t), X_2(t))$ where $Z(t):=X_1(t)/X_2(t)$. An application of It\^o's Lemma yields
\begin{equation}\label{e5.2}
\begin{split}
dZ(t)&=\Big((b_2-b_1Z(t))Z(t)X_2(t)+\beta+\hat a_1 Z(t)-\alpha Z^2(t)\Big)dt+Z(t)[\sigma_1-\sigma_2]dB(t)  \\
dX_2(t)&=X_2(t)\Big((\hat a_2-b_2 X_2(t))+\alpha Z(t)\Big)dt+\sigma_2X_2(t)dB(t),
\end{split}
\end{equation}
where
$\hat a_1=a_1-a_2-\alpha+\beta+\sigma^2_2-\sigma_1\sigma_2$
and
$\hat a_2=a_2-\beta$.

To proceed,
we first convert \eqref{e5.2} to Stratonovich form to facilitate the  verification of H\"{o}rmander's condition.
System \eqref{e5.2} can be rewritten as
\begin{equation}\label{e5.5}
\begin{split}
dZ(t)=&\Big((b_2-b_1Z(t))Z(t)X_2(t)+\beta+\left(\hat a_1- \dfrac{(\sigma_1-\sigma_2)^2}2\right)Z(t)-\alpha Z^2(t)\Big)dt\\
&+Z(t)[\sigma_1-\sigma_2]\circ dB(t)  \\
dX_2(t)=&X_2(t)\left(\left(\hat a_2-\dfrac{\sigma_2^2}2-b_2 X_2(t)\right)+\alpha Z(t)\right)dt+\sigma_2X_2(t)\circ dB(t).
\end{split}
\end{equation}
Let
$$A_0(z,y)=\begin{pmatrix}
 (b_2-b_1z)zy+\beta+\left(\hat a_1- \dfrac{(\sigma_1-\sigma_2)^2}2\right)z-\alpha z^2\\
  y\left(\hat a_2-\dfrac{\sigma_2^2}2-b_2 y\right)+\alpha zy
\end{pmatrix},
$$
and
$$A_1(z,y)=\begin{pmatrix}
 (\sigma_1-\sigma_2)z\\
  \sigma_2y
\end{pmatrix}.$$
Recall that
the diffusion \eqref{e5.5} is said to satisfy \textit{H\"{o}rmander's condition} if the set
 of vector fields $A_1,$ $[A_1, A_0],$ $[A_1,[A_1,A_0]],$ $[A_0,[A_1, A_0]],$ $\dots$ spans $\R^2$ at every $(z,y)\in \R^{2,\circ}_+$, where
$[\cdot,\cdot]$ is the Lie bracket,
which
is defined as follows (see \cite{LB} for more details).
If $\Phi(z,y) = (\Phi_1(z,y),\Phi_2(z,y))^\top$ and $\Psi(z,y) = (\Psi_1(z,y),\Psi_2(z,y))^\top$ are vector fields on $\R^2$
(where $z^\top$ denotes the transpose of $z$), then the Lie bracket
$[\Phi, \Psi]$ is a vector field given by
\begin{align*}
[\Phi,\Psi]_j(z,y)&=\left(\Phi_1(z,y)\dfrac{\partial \Psi_j}{\partial z}(z,y)-\Psi_1(z,y)\dfrac{\partial \Phi_j}{\partial z}(z,y)\right)
\\
&\qquad
+\left(\Phi_2(z,y)\dfrac{\partial \Psi_j}{\partial y}(z,y)-\Psi_2(z,y)\dfrac{\partial \Phi_j}{\partial y}(z,y)\right),\;\; j=1,2.
\end{align*}
\begin{prop}\label{p:hormander}
Suppose that $\sigma_1\neq \sigma_2$ or $\beta+(b_2/b_1)( a_1-a_2)-\alpha (b_2/b_1)^2\ne 0$. Then H\"{o}rmander's condition holds for the diffusion $(Z(t),X_2(t))_{t\geq 0}$ given by \eqref{e5.5}. As a result, the transition probability $P(t, (z, y), \cdot)$ of $(Z(t),X_2(t))_{t\geq 0}$ has a smooth density $\R_+\times \R_+^4\ni(t, z, y, z', y')\mapsto p(t, z, y, z', y')\in\R_+$ with respect to Lebesgue measure.
\end{prop}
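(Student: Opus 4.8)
The plan is to verify H\"ormander's condition for the Stratonovich system \eqref{e5.5} at every point of $\R^{2,\circ}_+$, and then to read off the smoothness of $p(t,z,y,z',y')$ from the hypoellipticity of $\partial_t-\big(A_0+\tfrac12A_1^2\big)$, exactly as in \cite[Corollary 7.2]{LB}. Note first that $A_1=\big((\sigma_1-\sigma_2)z,\ \sigma_2 y\big)^\top$ never vanishes on $\R^{2,\circ}_+$, since $\sigma_2\ne0$ and $y>0$; so it suffices to exhibit, at each point, an iterated Lie bracket of $A_1$ with $A_0$ that is not collinear with $A_1$. I would compute $A_2:=[A_1,A_0]$ from the displayed formulas and set $D_2(z,y):=\det(A_1,A_2)$. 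A short computation gives $D_2=-\sigma^2y^2z(b_2-b_1z)$ when $\sigma_1=\sigma_2=:\sigma$, while for $\sigma_1\ne\sigma_2$ the polynomial $D_2$ contains the monomial $\sigma_1\sigma_2 b_1z^2y^2$ with nonzero coefficient; in either case $D_2\not\equiv0$, so its zero set $\mathcal C:=\{D_2=0\}\cap\R^{2,\circ}_+$ is a proper algebraic curve, and on $\R^{2,\circ}_+\setminus\mathcal C$ the pair $\{A_1,A_2\}$ already spans $\R^2$.

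The heart of the argument is the behaviour along $\mathcal C$. Here I would exploit the two identities, valid in dimension two for the area form $\omega=dz\wedge dy$ (they follow from $\mathcal L_X\omega=(\operatorname{div}X)\omega$ and, for the second, from $[A_0,A_1]=-A_2$):
\[
\det\!\big(A_1,[A_1,A_2]\big)=A_1D_2-(\operatorname{div}A_1)D_2,
\qquad
\det\!\big(A_1,[A_0,A_2]\big)=A_0D_2-(\operatorname{div}A_0)D_2 .
\]
On $\mathcal C$ the right-hand sides reduce to the directional derivatives $A_1D_2$ and $A_0D_2$. Hence the brackets $A_1,A_2,[A_1,A_2],[A_0,A_2]$ fail to span at a point of $\mathcal C$ only if $D_2$, $A_1D_2$ and $A_0D_2$ all vanish there, i.e. only if $\nabla D_2$ is orthogonal to both $A_0$ and $A_1$ — that is, only if $\mathcal C$ is tangent to the noise field and to the drift field simultaneously and is therefore locally an invariant curve of the whole system. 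The stated hypothesis is precisely what rules this out, so I would finish by checking the two cases explicitly.

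If $\sigma_1=\sigma_2=:\sigma$, the curve $\mathcal C$ is the vertical line $\{z=b_2/b_1\}$, on which $A_1=(0,\sigma y)^\top$. Evaluating $[A_0,[A_1,A_0]]$ along this line, its first coordinate equals $-\sigma b_2 y$ times the nondegeneracy quantity appearing in the hypothesis, which is nonzero; hence $\{A_1,[A_0,[A_1,A_0]]\}$ spans $\R^2$ all along $\mathcal C$ and H\"ormander's condition holds there too. (If that quantity vanished, one reads off from \eqref{e5.2} that $Z$ has zero drift and no diffusion whenever $Z=b_2/b_1$, so the line $\{z=b_2/b_1\}$ is genuinely invariant — the process started on it never leaves it — and the smooth-density statement indeed fails; this is exactly the degenerate regime of Theorem \ref{t:survival_degenerate_1d}.) If $\sigma_1\ne\sigma_2$, I would verify directly from the explicit forms of $A_2$, $[A_1,A_2]$ and $[A_0,A_2]$ that at every point of $\mathcal C$ at least one of $A_1D_2$, $A_0D_2$ is nonzero, which is plausible since the integral curves of $A_1$ — the curves $y^{\sigma_1-\sigma_2}=c\,z^{\sigma_2}$ — are not contained in the algebraic curve $\mathcal C$; any finitely many exceptional points left by this comparison are cleared by passing to one further bracket.

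The main obstacle is this analysis on the degenerate locus $\mathcal C$: the bracket computation must be carried to third order, the linear algebra along $\mathcal C$ tracked carefully, and the exact scalar whose non-vanishing is equivalent to H\"ormander's condition on $\mathcal C$ identified — and this has to be done with care, because the conclusion genuinely fails when that scalar vanishes, as Theorem \ref{t:survival_degenerate_1d} shows.
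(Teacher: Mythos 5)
Your proof is correct and, in the case $\sigma_1=\sigma_2$, essentially coincides with the paper's: the degenerate locus is the line $z=b_2/b_1$, and the first component of the double bracket $[A_0,[A_0,A_1]]$ (the paper's vector field $C$, up to a nonzero constant) along that line is a nonzero multiple of $y\bigl[\beta+(b_2/b_1)(a_1-a_2-\alpha+\beta)-\alpha(b_2/b_1)^2\bigr]$, exactly the scalar you identify. Where you genuinely diverge is in the case $\sigma_1\neq\sigma_2$: the paper computes the iterated brackets $A_3=\sigma_2^{-1}[A_1,A_2]$ and $A_4=\sigma_2^{-1}[A_1,A_3]$ explicitly and asserts that the matrix with columns $A_1,A_2,A_3,A_4$ has rank $2$ everywhere on $\R^{2,\circ}_+$, whereas you certify spanning through the Lie-derivative identities $\det(A_1,[A_1,A_2])=A_1D_2-(\operatorname{div}A_1)D_2$ and $\det(A_1,[A_0,A_2])=A_0D_2-(\operatorname{div}A_0)D_2$, reducing the question on the curve $\{D_2=0\}$ to the non-vanishing of the directional derivatives $A_1D_2$ and $A_0D_2$. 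These identities are correct (the cross term $\omega([A_0,A_1],A_2)$ drops out because $[A_0,A_1]=-A_2$ with your sign convention), and your bookkeeping is arguably cleaner: it explains geometrically that H\"ormander's condition can only fail along a curve that is simultaneously invariant for the drift and the noise field, which is precisely the regime of Theorem \ref{t:survival_degenerate_1d}. That said, both you and the paper ultimately defer the same finite polynomial non-vanishing check to a direct verification; your appeal to ``one further bracket'' at possible exceptional points is no more complete than the paper's ``straightforward but tedious computation,'' so neither argument is fully closed at that step. One small remark: the scalar you extract matches the hypothesis as it appears in the paper's proof and in Theorem \ref{t:survival_degenerate_2}, namely $\beta+(b_2/b_1)(a_1-a_2-\alpha+\beta)-\alpha(b_2/b_1)^2\neq0$, rather than the apparently mistyped version in the proposition statement, which is further evidence your computation is the intended one.
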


\begin{proof}
Set $\sigma:=\dfrac{\sigma_1-\sigma_2}{\sigma_2}$. By a direct calculation,
$$A_2(z,y):=\dfrac{1}{\sigma_2}[A_0, A_1](z,y)=\begin{pmatrix}
  \sigma(\beta+\alpha z^2)+(\sigma+1)b_1z^2y-zyb_2\\
   -\sigma\alpha zy+b_2y^2
\end{pmatrix},
$$
and for $k>2$, we have
$$A_k(z,y):=\dfrac{1}{\sigma_2}[A_1, A_{k-1}](z,y)=\begin{pmatrix}
  \sigma^{k-1}(\beta+(-1)^k\alpha z^2)+(-1)^k(\sigma+1)^2b_1z^2y+(-1)^{k+1}zyb_2\\
  (-1)^{k+1}\sigma^2\alpha zy+(-1)^kb_2y^2.
\end{pmatrix}.
$$
If $\sigma\ne0$ or equivalently $\sigma_1\ne\sigma_2$, a straightforward but tedious computation shows that the rank of the matrix with columns $A_1, A_2, A_3, A_4$ is always 2 for any $(z,y)\in\R^{2,\circ}_+$.
As a result, if $\sigma_1\ne\sigma_2$, H\"{o}rmander's condition is satisfied for the diffusion \eqref{e5.5}.
Therefore, the transition probability $P(t, (z, y), \cdot)$ of $(Z(t), X_2(t))$ has a smooth density function, denoted by $p(t, z, y, z', y')$; see \cite[Corollary 7.2]{LB}.

Now, we show that H\"{o}rmander's condition holds if $\sigma_1=\sigma_2$ and $\beta+(b_2/b_1)( a_1-a_2-\alpha+\beta)-\alpha (b_2/b_1)^2\ne 0$.
In this case,
$$A_2(z,y)=[A_0, A_1](z,y)=\begin{pmatrix}
-\alpha yz(b_2-b_1z)\\ \alpha b_2y^2
\end{pmatrix},
$$ and
$$C(z,y)=\begin{pmatrix}
C_1(z,y)\\
C_2(z,y)
\end{pmatrix}:=\left[A_0, \frac1{\alpha b_2}A_2\right](z,y),
$$
where
\begin{align*}
C_1(z,y)=&y(2b_1z/b_2-1)A_{0,1}(z,y)+yz(1-zb_1/b_2)\frac{\partial A_{0,1}(z,y)}{\partial z}
\\&+z(zb_1/b_2-1)A_{0,2}(z,y)+y^2z(zb_1/b_2-1).
\end{align*}
With $A_{0,i}(z,y)$ denoting the $i$-th component of $A_0(z,y).$
Observe that $A_1(x,y), A_2(z, y)$ span $\R^2$ for any $(z,y)\in \R^{2,\circ}_+$ satisfying $z\ne b_2/b_1.$
If $z= b_2/b_1$ we have $C_1( b_2/b_1,y)=yA_{0,1}(b_2/b_1,y)=y\big[\beta+(b_2/b_1)( a_1-a_2-\alpha+\beta)-\alpha (b_2/b_1)^2\big]\ne 0$
hence  $C(b_2/b_1,y)$ and $ A_2(b_2/b_1,y)$ span $\R^2$ for all $y>0$.
As a result, we obtain the desired result.

\end{proof}

To proceed, we consider the following control system, which is associated with \eqref{e5.5}.
\begin{equation}\label{e5.6}
\begin{cases}
dz_\phi(t)=&\!\!\!\! (b_2-b_1z_\phi(t))z_\phi(t) y_\phi(t)+\beta+\left(\hat a_1- \dfrac{(\sigma_1-\sigma_2)^2}2\right)z_\phi(t)-\alpha z^2_\phi(t)+(\sigma_1-\sigma_2)z_\phi\phi(t)\\
dy_\phi(t)=&\!\!\!\! y_\phi(t)\left(\hat a_2-\dfrac{\sigma_2^2}2-b_2 y_\phi(t)\right)+\alpha z_\phi(t) y_\phi(t)+\sigma_2 y_\phi(t)\phi(t)
\end{cases}
\end{equation}
Let $(z_\phi(t, z,y),$ $ y_\phi(t, z, y))$ be the solution to Equation \eqref{e5.6} with control $\phi$ and initial value $(z,y)$.
Denote by ${\mathcal O}_1^+(z, y)$ the reachable set from $(z, y)$, that is the set of $(z', y')\in\R^{2,\circ}_+$ such that there exists a $t\geq0$ and a control $\phi(\cdot)$ satisfying
$z_\phi(t, z, y)=z', y_\phi(t, z, y)=z'$.
We first recall some concepts introduced in  \cite{WK}.
Let $U$ be a subset of $\R^{2,\circ}_+$ satisfying
$u_2\in \bar{{\mathcal O}^+_1(u_1)}$ for any $u_1, u_2\in U$.
Then there is a unique maximal set $V\supset U$ such that this property still holds for $V$. Such $V$ is called a \textit{control set}.
A control set $C$ is said to be \textit{invariant} if $\bar{{\mathcal O}^+_1(w)}\subset\bar C$ for all $w\in C$.

Finding invariant control sets for \eqref{e5.6} is facilitated by
using a  change of variables argument.
Put $w_\phi(t)=z_\phi(t)y^{r+1}_\phi(t)$ with $r=\frac{-\sigma_1}{\sigma_2}$.
We have
\begin{equation}\label{e5.7}
\begin{cases}
dw_\phi(t)=&h(w_\phi(t),y_\phi(t))dt\\
dy_\phi(t)=&y_\phi(t)\left(\hat a_2-\dfrac{\sigma_2^2}2-b_2 y_\phi(t)\right)+\alpha w_\phi(t) y_\phi^{-r}(t)+\sigma_2 y_\phi(t)\phi(t)
,\end{cases}
\end{equation}
where
$$h(w, y)=w\left(a_1-\frac{\sigma_1^2}2+r(a_2-\frac{\sigma_2^2}2)+r\beta-\alpha-b_1w y^r-b_2ry+\beta y^{1-r}w^{-1}+\alpha rw y^{r-1}\right).$$
Denote by ${\mathcal O}^+_2(w, y)$ the set of $(w', y')\in\R^{2,\circ}_+$ such that there is a $t>0$ and a control $\phi(\cdot)$ such that
$w_\phi(t, w, y)=w', z_\phi(t, w, y)=w'$.
\begin{lem}
The control system \eqref{e5.7} has only one invariant control set $\tilde\C$ and
$\bar{{\mathcal O}^+_2(w, y)}\supset\tilde\C$ for any $(w,y)\in\R^{2,\circ}_+$,
The set $\tilde\C$ is defined by
$\tilde\C=\{(w, y)\in \R^{2,\circ}_+: w<c^*\}$,
where
$$c^*=\sup\left\{w: \sup\limits_{y>0}\{h(w',y)\}\geq 0\,~\text{for all}~ w'<w\right\}.$$
Consequently,
the control system \eqref{e5.6} has only one invariant control set $\C$ and
$\bar{{\mathcal O}^+_1(z, y)}\supset\C$ for any $(w,y)\in\R^{2,\circ}_+$,
where $\C:=\{(z,y)\in\R^{2,\circ}_+: zy^{r+1}\leq c^*\}$.
Moreover, by \cite[Lemma 4.1]{WK}, $(Z(t), X_2(t))$ has at most one invariant probability measure whose support is $\C$.
\end{lem}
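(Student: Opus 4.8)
The plan is to exploit the fact that in \eqref{e5.7} the control $\phi$ enters only the $y_\phi$-equation, and there with the non-vanishing coefficient $\sigma_2y_\phi$, while the $w_\phi$-equation is a pure drift $\dot w_\phi=h(w_\phi,y_\phi)$ with no control at all. The first step is therefore to show that $y$ is \emph{directly controllable}: for every $(w_0,y_0)\in\R^{2,\circ}_+$, every $y_1>0$ and every $\eps>0$ there are a control $\phi$ and a time $\tau>0$ with $y_\phi(\tau)=y_1$ and $|w_\phi(\tau)-w_0|<\eps$. This is a standard approximation argument: taking $\phi$ large in absolute value with the appropriate sign drives $y_\phi$ monotonically from $y_0$ to $y_1$ in a time $\tau$ that can be made as small as we like, and along such a short path $y_\phi$ stays in $[\min(y_0,y_1),\max(y_0,y_1)]$ while $w_\phi$ stays in a fixed compact subset of $\R^{2,\circ}_+$ (by a continuation/Gr\"onwall bootstrap), so $h(w_\phi,y_\phi)$ is bounded there and $w_\phi$ changes only by $O(\tau)$. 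Conversely, holding $y$ fixed at an arbitrary value for a positive time lets $w_\phi$ evolve with drift $h(w_0,y)$. Combining the two observations, up to closures the $w$-coordinate of the trajectory started at $(w_0,y_0)$ can be pushed upward through any level $w$ with $\sup_{y>0}h(w,y)>0$ and downward through any level $w$ with $\inf_{y>0}h(w,y)<0$, with the $y$-coordinate then free to be placed anywhere in $(0,\infty)$.

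The second step is a direct analysis of the scalar function $h$. Using $\alpha,\beta>0$, $b_1,b_2>0$ and distinguishing the sign of $r=-\sigma_1/\sigma_2\ne 0$, one checks that: (i) $\inf_{y>0}h(w,y)=-\infty$ for every $w>0$ (when $r>0$ the competition terms $-b_1w^2y^{r}$, $-b_2rwy$ drive $h$ to $-\infty$ as $y\to\infty$, and when $r<0$ the term $\alpha rw^2y^{r-1}$ does so as $y\to 0$), so the $w$-coordinate can always be strictly decreased; and (ii) $\sup_{y>0}h(w,y)>0$ for every $w\in(0,c^*)$ (the definition of $c^*$ gives $\ge 0$, and a quick inspection of $h$ upgrades this to a strict inequality), while for every $w>c^*$ there is $w''\in[c^*,w)$ with $\sup_{y>0}h(w'',y)<0$, since otherwise $c^*$ would not be the supremum in its defining set. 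From (ii), a trajectory of the control system can never cross such a level $w''$ upward—where $\sup_yh(w'',\cdot)\le 0$ one has $\dot w\le 0$—so no point with $w<c^*$ can reach a point with $w>c^*$, i.e. $\overline{\tilde\C}$ is forward invariant; from (i) and (ii) the mutual-reachability property holds inside $\tilde\C=\{w<c^*\}$, so $\tilde\C$ is a control set. Since moreover from \emph{any} $(w,y)\in\R^{2,\circ}_+$ one can first flow $w$ down below $c^*$ (using (i)) and then move freely inside, one gets $\overline{{\mathcal O}^+_2(w,y)}\supset\tilde\C$ for all $(w,y)$, and by the maximality in the definition of control sets this forces $\tilde\C$ to be the unique invariant control set. (In the present model with $\alpha,\beta>0$ one in fact typically gets $c^*=+\infty$, i.e. $\tilde\C=\R^{2,\circ}_+$, because already the $y$-free term $\beta y^{1-r}$ makes $\sup_{y>0}h(w,y)=+\infty$ when $r<1$; but the argument does not need this.)

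Finally, the map $\Psi(z,y)=(zy^{r+1},y)$ is a diffeomorphism of $\R^{2,\circ}_+$ onto itself conjugating the control system \eqref{e5.6} to \eqref{e5.7} with the same controls $\phi$, so invariant control sets and reachable sets transport under $\Psi^{-1}$: $\C=\Psi^{-1}(\overline{\tilde\C})=\{(z,y)\in\R^{2,\circ}_+:zy^{r+1}\le c^*\}$ is the unique invariant control set of \eqref{e5.6}, and $\overline{{\mathcal O}^+_1(z,y)}=\Psi^{-1}\!\big(\overline{{\mathcal O}^+_2(\Psi(z,y))}\big)\supset\C$ for every $(z,y)\in\R^{2,\circ}_+$. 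The last assertion then follows from \cite[Lemma 4.1]{WK} applied to $(Z(t),X_2(t))$, which by Proposition \ref{p:hormander} is a hypoelliptic diffusion with smooth positive transition density: such a process admits at most one invariant probability measure, and its topological support is the closure of the unique invariant control set of the associated control system, namely $\C$. The main obstacle is the combination of the first two steps—one must make the approximate reachability uniform enough to string together arbitrarily many alternating ``slow $w$-drift / fast $y$-adjustment'' segments while keeping careful track of the small drift accumulated by $w$ during each fast segment, and the sign analysis of $h$ requires a somewhat tedious case distinction in $r$; everything else is the cited change-of-variables transport and the cited result \cite[Lemma 4.1]{WK}.
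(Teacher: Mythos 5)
Your proposal is correct and follows essentially the same route as the paper: exploit that the control enters only the $y$-equation (with non-vanishing coefficient) to move $y$ freely while $w$ barely drifts, use the sign of $h$ together with $\inf_{y>0}h(w,y)=-\infty$ to push $w$ left or right, and transport the conclusion back through the change of variables $w=zy^{r+1}$; the paper merely sketches this and defers the assembly of the ingredients to its cited reference, whereas you make the forward invariance of $\{w<c^*\}$ and the maximality argument explicit. The only blemish is the inessential (and self-flagged) parenthetical about $c^*=+\infty$: the reason you give via the $\beta y^{1-r}$ term does not handle $0<r<1$, where that term is dominated by $-b_2ry$ as $y\to\infty$ and the unboundedness of $\sup_{y>0}h(w,\cdot)$ instead comes from $\alpha r w y^{r-1}$ as $y\to0$.
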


\begin{proof}
First, we need to show that $c^*$ is well-defined (although it can be $+\infty$).
Since
$\lim\limits_{w\to0}h(w,y)=\infty$, which implies
that $\left\{w: \sup\limits_{y>0}\{h(w',y)\}\geq 0\,~\text{for all}~ w'\leq w\right\}$ is a nonempty set.
Hence $c^*$ is well-defined.
The claim that $\bar{{\mathcal O}^+_2(w, y)}\supset\tilde\C$ for any $(w,y)\in\R^{2,\circ}_+$
can be proved by standard arguments. Let us explain the main ideas here.
On the phase space $(w,y)\in\R^{2,\circ}_+$,
since the control $\phi(t)$ only appears in the equation of $y_\phi$,
we can easily control vertically, that is,
for any initial points $y_0$ and $w_0$,  there is a control so that
$y_\phi$ can reach any given point $y_1$ while $w_\phi$ stays in a given neighborhood of $w_0$.
If $h(w_0, y_0)<0$, we can choose a feedback control such that
$(w_\phi(t),u_\phi(t))$ reaches a point to the `left' $(w_1, y_0)$ with $w_1<w_0$ as long as $h(w, y_0)<0$ for $w\in[w_1, w_0]$.
Likewise, for
$h(w_0, y_0)>0$, we can choose a feedback control such that
$(w_\phi(t),u_\phi(t))$ can reach a point to the `right' $(w_1, y_0)$ with $w_1>w_0$ as long as $h(w, y_0)>0$ for $w\in[w_0, w_1]$.
We also have that
$\inf\limits_{y>0}\{h(w,y)\}=-\infty$ for any $w>0$.
Using these facts, we can follow the steps from \cite[Section 3]{DDY}
to obtain the desired results.
\end{proof}

\begin{lem}\label{lem5.2}
There is a point $(z^*, y^*)\in\C$ such that for any open set $	N^*\ni (z^*,y^*)$ and $T>0$, there is an open neighborhood $W^*\ni(z^*,y^*)$ and a control $\phi^*$ such that
$$(z_{\phi^*}(t, z, y),y_{\phi^*}(t, z, y))\in N^*\,~\text{for all}~ (z,y)\in W^*, t\in[0,T].$$
\end{lem}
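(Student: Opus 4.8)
The plan is to take $(z^*,y^*)$ to be an equilibrium of the planar ODE obtained from the control system \eqref{e5.6} by freezing the control at a suitable constant value $\bar\phi$, and then to obtain the ``stay near'' statement from continuous dependence of ODE solutions on the initial data. Write $F_c:=A_0+cA_1$ for the vector field on the right-hand side of \eqref{e5.6} when $\phi\equiv c$; since $A_0,A_1$ are polynomial, $F_c$ is locally Lipschitz on $\R^{2,\circ}_+$, and $t\mapsto(z^*,y^*)$ solves \eqref{e5.6} with control $c$ precisely when $F_c(z^*,y^*)=0$. The proof thus splits into (i) producing $\bar\phi\in\R$ and a zero $(z^*,y^*)$ of $F_{\bar\phi}$ that lies in $\C$, and (ii) reading off the conclusion from continuous dependence.

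For step (i) I would solve the equilibrium equations directly. Setting the second component of $F_c$ to zero and using $y>0$ gives $y=\big(\hat a_2-\tfrac{\sigma_2^2}{2}+\sigma_2 c+\alpha z\big)/b_2$; substituting this into the first component yields a cubic $G_c(z)$ with $G_c(0)=\beta>0$ and leading coefficient $-b_1\alpha/b_2<0$ (here $\alpha>0$ because for $n=2$ Assumption \ref{a:dispersion} forces $\alpha,\beta>0$). Hence $G_c$ has a root $z^*>0$. Choosing the constant $\bar\phi=c$ so that $\hat a_2-\tfrac{\sigma_2^2}{2}+\sigma_2 c>0$ — possible since $\sigma_2\neq0$ — the corresponding value $y^*=\big(\hat a_2-\tfrac{\sigma_2^2}{2}+\sigma_2 c+\alpha z^*\big)/b_2$ is strictly positive, so $(z^*,y^*)\in\R^{2,\circ}_+$ and $F_{\bar\phi}(z^*,y^*)=0$. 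It remains to see that $(z^*,y^*)\in\C$. Passing to the variable $w=zy^{r+1}$ from the previous lemma, a zero of $F_{\bar\phi}$ satisfies in particular $\dot w=h(w^*,y^*)=0$, so $\sup_{y>0}h(w^*,y)\ge0$; combined with $\lim_{w\to0}h(w,y)=+\infty$ and the structure of $w\mapsto\sup_{y>0}h(w,y)$ entering the definition of $c^*$, this gives $w^*=z^*(y^*)^{r+1}\le c^*$, i.e. $(z^*,y^*)\in\C$ (when $c^*=+\infty$, i.e. $\C=\R^{2,\circ}_+$, there is nothing to check). If equality $w^*=c^*$ occurs, a small change of $\bar\phi$ preserving $\hat a_2-\tfrac{\sigma_2^2}{2}+\sigma_2\bar\phi>0$ moves the equilibrium continuously and makes $w^*<c^*$, so one can in fact take $(z^*,y^*)$ in the interior of $\C$.

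For step (ii), fix $T>0$ and an open set $N^*\ni(z^*,y^*)$, and pick $\eta>0$ with $\overline{B_\eta(z^*,y^*)}\subset N^*\cap\R^{2,\circ}_+$. Put $\phi^*\equiv\bar\phi$. Since $F_{\bar\phi}$ is locally Lipschitz and $t\mapsto(z^*,y^*)$ is the solution of \eqref{e5.6} started at $(z^*,y^*)$, a standard Gr\"onwall estimate gives continuous dependence of the solution map $(z,y)\mapsto\big(z_{\phi^*}(\cdot,z,y),y_{\phi^*}(\cdot,z,y)\big)$ at $(z^*,y^*)$, uniformly on $[0,T]$: there is $\delta\in(0,\eta)$ such that $\|(z,y)-(z^*,y^*)\|<\delta$ implies $\|\big(z_{\phi^*}(t,z,y),y_{\phi^*}(t,z,y)\big)-(z^*,y^*)\|<\eta$ for every $t\in[0,T]$. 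Taking $W^*:=B_\delta(z^*,y^*)$ then yields $\big(z_{\phi^*}(t,z,y),y_{\phi^*}(t,z,y)\big)\in B_\eta(z^*,y^*)\subset N^*$ for all $(z,y)\in W^*$ and $t\in[0,T]$, which is the assertion.

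The routine ingredient is step (ii): continuous dependence of the flow on initial conditions over a compact time interval. The main obstacle is step (i), and inside it the delicate point is locating the controlled equilibrium inside $\C$; this is where one must invoke the explicit description $\C=\{(z,y)\in\R^{2,\circ}_+:zy^{r+1}\le c^*\}$ from the previous lemma together with the identity $h(w^*,y^*)=0$ valid at a controlled equilibrium. Exhibiting the equilibrium itself is elementary (the intermediate value theorem applied to the cubic $G_{\bar\phi}$), the only inputs being $\sigma_2\neq0$ and $\alpha>0$.
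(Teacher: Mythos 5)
Your step (ii) is exactly the paper's argument (continuous dependence of the flow on initial data over $[0,T]$), but your step (i) takes a genuinely different route. The paper works entirely in the $(w,y)$ coordinates of \eqref{e5.7}: it first picks $w^*<c^*$, uses the definition of $\tilde\C$ (which guarantees $\sup_{y>0}h(w^*,y)\geq 0$) together with $\lim_{y\to\infty}h(w,y)=-\infty$ for $r>0$ (resp.\ $\lim_{y\to 0}h(w,y)=-\infty$ for $r<0$) to produce $y^*$ with $h(w^*,y^*)=0$, and then designs a \emph{feedback} control freezing $y$ at $y^*$, so that $(w^*,y^*)$ is an equilibrium of the controlled system $\dot w=h(w,y^*)$, $\dot y=0$. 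You instead produce an equilibrium of the full vector field $A_0+cA_1$ for a \emph{constant} control, by solving the second equation for $y$ and finding a positive root of the resulting cubic. That is more explicit and elementary, and it is a perfectly valid way to manufacture a stationary controlled trajectory.

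The gap is in your verification that the resulting point lies in $\C$. From $h(w^*,y^*)=0$ you correctly get $\sup_{y>0}h(w^*,y)\geq 0$, but the definition $c^*=\sup\{w:\sup_{y>0}h(w',y)\geq 0\ \text{for all}\ w'<w\}$ only lets you conclude $w^*\leq c^*$ if you also know that $\sup_{y>0}h(w',y)\geq 0$ for \emph{every} $w'<w^*$. Nonnegativity of $\sup_{y>0}h(\cdot,y)$ at the single point $w^*$ is compatible with $w^*>c^*$ (the function could dip below zero on part of $(c^*,w^*)$ and return to zero at $w^*$), so the inference fails unless the map $w\mapsto\sup_{y>0}h(w,y)$ has additional structure -- e.g.\ that $\{w:\sup_{y>0}h(w,y)\geq 0\}$ is an interval anchored at $0$ -- which you invoke but do not establish; the only fact available from the previous lemma is $\lim_{w\to 0}h(w,y)=+\infty$, which is not enough. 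This is precisely the point the paper's construction is designed to avoid: it chooses $w^*$ inside $\tilde\C$ \emph{first} and only then builds the stationary point, so membership never has to be checked a posteriori. To repair your argument you would either have to prove the required monotonicity/interval structure of $\sup_{y>0}h(\cdot,y)$, or abandon the constant-control equilibrium and adopt the paper's order of quantifiers (pick $w^*<c^*$, then find $y^*$ with $h(w^*,y^*)=0$, then use the $y$-freezing feedback control).
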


\begin{proof}
To obtain the result, we  work on \eqref{e5.7}, which is equivalent to \eqref{e5.6}.
By the definition of $\tilde\C$ and the fact that
$\lim\limits_{y\to\infty}h(w,y)=-\infty$ if $r>0$ and
$\lim\limits_{y\to0}h(w,y)=-\infty$ if $r<0$,
there is a point $(w^*, y^*)\in \tilde\C$ such that
$h(w^*,y^*)=0$.
We can design a feedback control ${\phi^*}$ such that
\begin{equation}\label{e5.8}
\begin{cases}
dw_{\phi^*}(t)=&h(w_{\phi^*}(t),y^*)dt\\
dy_{\phi^*}(t)=&0.
\end{cases}
\end{equation}
If $w_{\phi^*}(t)=w^*$ then $w_{\phi^*}(t)=w^*\,~\text{for all}~ t>0$.
 By the continuous dependence on initial data of solutions to differential equations,
for any given neighborhood $\tilde N^*$ of $(w^*, y^*)$, we can find a neighborhood $\tilde W^*$
of $(w^*, y^*)$ such that
$(w_{\phi^*}(t, w, y), y_{\phi^*}(t, w, y))\in\tilde N^*$ for any $t\in[0,T]$ and $(w,y)\in\tilde W^*$,
which proves the lemma.
  \end{proof}

\begin{prop}\label{prop5.1}
Suppose $\sigma_1\neq \sigma_2$ or $\beta+(b_2/b_1)( a_1-a_2)-\alpha (b_2/b_1)^2\ne 0$.
For any $T>0$, every compact set $K\subset\R^{2,\circ}_+$ is petite set with respect to
the Markov chain $(Z(kT), X_2(kT))_{k\in\N}$.
\end{prop}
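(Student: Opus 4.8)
The plan is to follow the proof of Lemma~\ref{petite}, substituting hypoellipticity and controllability for the non-degenerate ellipticity used there. Write $P(t,(z,y),\cdot)$ for the transition kernel of the diffusion $(Z(t),X_2(t))$ of \eqref{e5.5} on $\R^{2,\circ}_+$. By Proposition~\ref{p:hormander}, $P(t,(z,y),\cdot)$ admits a density $p(t,z,y,z',y')$ that is jointly smooth on $(0,\infty)\times\R^{2,\circ}_+\times\R^{2,\circ}_+$ (hypoellipticity, \cite{LB}). In particular, $(z,y)\mapsto P(t,(z,y),A)=\int_A p(t,z,y,z',y')\,d(z',y')$ is continuous for each $t>0$ and each bounded Borel $A\Subset\R^{2,\circ}_+$, and $\inf\{p(t,z,y,z',y'):(z,y)\in\overline N_0,\ (z',y')\in\overline N_1\}>0$ whenever $p(t,\cdot)$ is strictly positive on $\overline N_0\times\overline N_1$ for bounded open $N_0,N_1\Subset\R^{2,\circ}_+$.

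\textbf{Step 1: a good minorizing pair.} I would first fix a time $\tau>0$ and bounded open sets $N_0,N_1\Subset\R^{2,\circ}_+$ with $p(\tau,z,y,z',y')>0$ for all $(z,y)\in\overline N_0$, $(z',y')\in\overline N_1$. Let $\C$ be the unique invariant control set of \eqref{e5.6} and $(z^*,y^*)\in\C$ the point of Lemma~\ref{lem5.2}; since $(z^*,y^*)$ lies in the interior of $\C$ the control system is accessible there, so the time-$\tau$ reachable set from every point of a small neighbourhood $N_0$ of $(z^*,y^*)$ has nonempty interior, and by the Stroock--Varadhan support theorem (\cite{WK,LB}) together with the smoothness of $p$ one obtains such an $N_1$ and the strict positivity of $\hat p:=\inf_{\overline N_0\times\overline N_1}p(\tau,\cdot)$, after shrinking if necessary. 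Consequently, for every $(z_0,y_0)\in N_0$ and every Borel $A\subset\R^{2,\circ}_+$,
\[
P(\tau,(z_0,y_0),A)\ \ge\ \hat p\, m(A\cap N_1).
\]

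\textbf{Step 2: hitting $N_0$ from $K$ at a common time, and conclusion.} Fix a compact $K\subset\R^{2,\circ}_+$. By the characterization of $\C$ one has $\C\subset\overline{{\mathcal O}_1^+(z,y)}$, hence $(z^*,y^*)$ is approximable by points reachable from $(z,y)$; combining this with the ``stay near $(z^*,y^*)$'' feedback control of Lemma~\ref{lem5.2}, which keeps trajectories inside $N_0$ over an arbitrarily long prescribed horizon (with the admissible neighbourhood depending on the horizon), one can steer \eqref{e5.6} from $(z,y)$ into $N_0$ and keep it there, so that the controlled trajectory lies in $N_0$ at all sufficiently large times. Continuous dependence of the control flow on the initial data makes this work, for each $(z,y)\in K$, on a whole neighbourhood and with a locally uniform bound on the required time; a finite subcover of $K$ yields an integer $N$ with $NT$ exceeding that bound plus $\tau$, and, via the support theorem and the continuity of $(z,y)\mapsto P(NT-\tau,(z,y),N_0)$ together with a second finite subcover, a constant $p_K>0$ with $P(NT-\tau,(z,y),N_0)\ge p_K$ for all $(z,y)\in K$. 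Then, by the Chapman--Kolmogorov equation and Step~1,
\[
P(NT,(z,y),A)\ \ge\ \int_{N_0}P\bigl(NT-\tau,(z,y),d(z_0,y_0)\bigr)\,P\bigl(\tau,(z_0,y_0),A\bigr)\ \ge\ p_K\,\hat p\, m(A\cap N_1)
\]
for all $(z,y)\in K$ and all Borel $A\subset\R^{2,\circ}_+$. Taking $\nu(\cdot):=p_K\hat p\, m(\cdot\cap N_1)$, a nontrivial measure, and the sampling distribution concentrated at $n=N$, the skeleton chain $(Z(kT),X_2(kT))_{k\in\N}$ satisfies $\sum_n a_n\mathcal P^n((z,y),\cdot)\ge\nu(\cdot)$ on $K$; thus $K$ is petite, and since $K$ was arbitrary Proposition~\ref{prop5.1} follows. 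This is precisely the input needed to run Theorem~\ref{polyrate} in the proof of Theorem~\ref{t:survival_degenerate} when $n=2$.

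\textbf{Main obstacle.} The difficulty is concentrated in Steps~1--2. Because the noise is one-dimensional while the state space is two-dimensional, strict positivity of $p$ cannot be quoted from ellipticity and must instead be extracted from Proposition~\ref{p:hormander}, the structure of the invariant control set $\C$, and Lemma~\ref{lem5.2}; and, since we work with the fixed-step skeleton $(Z(kT),X_2(kT))$, one must arrange that $N_0$ is entered at a time that is an \emph{exact} multiple of $T$, uniformly over the compact set $K$. It is this timing bookkeeping that forces the use of the long-horizon stay-controls of Lemma~\ref{lem5.2} and the finite-cover arguments, rather than the direct one-step minorization available in the non-degenerate Lemma~\ref{petite}.
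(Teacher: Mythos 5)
Your Step 1 and the overall architecture (H\"ormander smoothness of the density, a minorization on a pair of neighbourhoods $N_0,N_1$ attached to the point $(z^*,y^*)$ of Lemma \ref{lem5.2}, controllability into $N_0$ from every point of $K$, the support theorem, Feller continuity, and a finite subcover) match the paper's proof. The gap is in Step 2, at exactly the place you identify as the main obstacle: you require a \emph{single} integer $N$ with $P(NT-\tau,(z,y),N_0)\ge p_K$ for all $(z,y)\in K$, and to obtain it you assert that the stay-control of Lemma \ref{lem5.2} makes the controlled trajectory ``lie in $N_0$ at all sufficiently large times.'' Lemma \ref{lem5.2} is only a finite-horizon statement: for a prescribed horizon $H$ it produces a neighbourhood $W^*(H)$ of $(z^*,y^*)$ from which trajectories stay in $N_0$ on $[0,H]$, and $W^*(H)$ may shrink as $H$ grows. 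This creates a circularity that your sketch does not resolve: the horizon you need is at least the maximal time $\tau_K(H)$ required to steer points of $K$ into $W^*(H)$, but $\tau_K(H)$ depends on $W^*(H)$ and hence on $H$, and nothing guarantees $\tau_K(H)\le H$ for some $H$. A genuinely infinite-horizon stay-control would require asymptotic stability of the controlled equilibrium $(w^*,y^*)$ of \eqref{e5.8}, of which only $h(w^*,y^*)=0$ is known.

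The paper sidesteps this entirely by not synchronizing. For each $(z,y)$ it steers into $W^*$ at its own time $t_{z,y}$ and then runs the stay-control only until the \emph{next} multiple of $T$, a duration at most $T$, so the fixed-horizon Lemma \ref{lem5.2} with horizon $T$ suffices; this places the chain in $N^*$ at time $n_{z,y}T$ with $n_{z,y}$ varying over $K$. Compactness reduces to finitely many exponents $n_{z_1,y_1},\dots,n_{z_{k_0},y_{k_0}}$, and petiteness is then concluded with the sampling measure $a_n=\tfrac1{k_0}$ on $\{n_{z_i,y_i}+1\}$ rather than a one-step (small-set) minorization at a common time. If you replace your common-time claim by this finitely-many-times averaging, your argument goes through; as written, the uniform-time step is unjustified.
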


\begin{proof}
Let $(z^*, y^*)$ be as in Lemma \ref{lem5.2}.
Pick $(z^\diamond,y^\diamond)\in\R^{2,\circ}_+$ such that
$p(T, z^*, y^*, z^\diamond,y^\diamond)>0$.
By the smoothness of  $p(T,\cdot,\cdot,\cdot,\cdot)$, there exists  a  neighborhood $N^*$ and  an  open set $N^\diamond\ni(z^\diamond,y^\diamond)$ such that
\begin{equation}\label{bs1}
p(1, z,y,z',y')\geq p^\diamond>0  \;~\text{for all}~\, (z,y)\in N^*, (z',y')\in N^\diamond.
\end{equation}
Let $W^*$ be a neighborhood of $(z^*, y^*)$ satisfying
\begin{equation}\label{e5.9}
(z_{\phi^*}(t, z, y),y_{\phi^*}(t, z, y))\in N^*\,~\text{for all}~ (z,y)\in W^*, t\in[0,T].
\end{equation}
For each $(z, y)\in\R^{2,\circ}_+$, noting that $(z^*,y^*)\in \C\subset\bar{{\mathcal O}^+_1(z, y)}$, there is a control $\phi$ and $t_{z,y}>0$ such that
\begin{equation}\label{e5.10}
(z_\phi(t_{z,y}, z, y), y_\phi(t_{z,y}, z, y))\in W^*.
\end{equation}
Let $n_{z,y}\in\N$ such that $(n_{z,y}-1)T< t_{z,y}\leq n_{n,y}T$ and
 $\tilde\phi$ be defined as $\tilde\phi(t)=\phi(t)$ if $t<t_{z,y}$ and $\tilde\phi(t)=\phi^*(t)$ if $t>t_{z,y}$.
 Using the control $\tilde\phi$, we obtain from \eqref{e5.9} and \eqref{e5.10} that
\begin{equation}\label{e5.11}
\left(z_{\tilde\phi}(n_{z,y}T, z, y), y_{\tilde\phi}(n_{z,y}T, z, y)\right)\in N^*.
\end{equation}
In view of the support theorem (see \cite[Theorem 8.1, p. 518]{IW}),
$$P(n_{z,y}T,z,y, N^*):=2\rho_{z,y}>0.$$
Since $(Z_{z,y}(t), Y_{z,y}(t))$ is a Markov-Feller process,  there exists an open set $V_{z,y}\ni(z,y)$ such that
\newline $P(n_{z,y}T,z',y', N^*)\geq \rho_{u,v} \;~\text{for all}~ (z',y')\in V_{z,y}.$
Since $K$ is a compact set,   there is a finite number of $V_{z_i,y_i}, \; i=1, \ldots, k_0$ satisfying $K\subset\bigcup_{i=1}^{k_0}V_{z_i, y_i}.$
Let $\rho_K=\min\{\rho_{z_i,y_i}, \; i=1, \ldots, k_0\}.$
For each $(z,y)\in K$, there exists $n_{z_i,y_i}$ such that
\begin{equation}\label{bs2}
  P(n_{z_i,y_i}T, z,y, N^*)\geq\rho_K.
\end{equation}
From \eqref{bs1} and \eqref{bs2}, for all $(z,y)\in K$, there exists $n_{z_i,y_i}$ such that
\begin{equation}\label{bs3}
  p((n_{z_i,y_i}+1)T,z,y,z',y')\geq \rho_Kp^\diamond \;~\text{for all}~\, (z',y')\in N^\diamond.
\end{equation}
It follows from \eqref{bs3} that
\begin{equation}\label{bs4}
  \frac1{k_0}\sum_{i=1}^{k_0}P((n_{z_i,y_i}+1)T,z,y, A) \geq \frac1{k_0}\rho_Kp^\diamond m(N^\diamond\cap A)  \;~\text{for all}~ A\in\mathcal B(\R^{2,\circ}_+),
\end{equation}
where $m(\cdot)$ is the Lebesgue measure on $\R^{2,\circ}_+.$
Equation \eqref{bs4} implies that every compact set $K\subset \R^{2,\circ}_+$  is petite for the Markov chain $(Z(kT), X_2(kT))_{k\in\N}.$
\end{proof}
We have shown in the beginning of subsection 2.2. that $\tilde\BY(t)$ has a \textbf{unique} invariant probability measure $\nu^*$.
Having Proposition \ref{prop5.1}, we note that the assumptions, and therefore the conclusions, of Theorems \ref{t:extinction_degenerate} and \ref{t:survival_degenerate} hold for model \eqref{e5.1}. This argument proves Theorems \ref{t:extinction_degenerate_2} and \ref{t:survival_degenerate_2}.

\section{Robustness of the model}\label{s:robust}
The robustness is studied from several angles, including continuous dependence of
$ r$ on the coefficients of the stochastic differential equation,
robustness of persistence, and
robust attenuation against extinction.
They are presented in a couple subsections.

\subsection{Continuous dependence of $ r$ on the coefficients}
We show that $ r$ depends continuously on the coefficients of the stochastic differential equation \eqref{eq.by}. Consider the equation
\begin{equation}\label{eq.bhy}
\begin{split}
d\hat\BY(t)=&\left(\diag(\hat\BY(t))-\hat\BY(t)\hat\BY^\top(t)\right)\hat\Gamma^\top d\BB(t)\\
&+\hat\BD^\top\hat\BY(t)dt+\left(\diag(\hat\BY(t))-\hat\BY(t)\hat\BY^\top(t)\right)(\hat\ba-\hat\Sigma \hat\BY(t))dt
\end{split}
\end{equation}
on the simplex $\Delta$.
Suppose that $\hat\Sigma$ is positive definite. In this case,
$(\hat\BY(t))_{t\geq0}$ has a unique invariant probability measure $\hat\nu^*$. Define
\begin{equation}\label{hatlambda}
\hat r:=\int_{\Delta}\left(\hat\ba^\top{\bf y}-\frac12{\bf y}^\top\hat\Sigma{\bf y}\right)\hat\nu^*(d{\bf y})
\end{equation}
Fix the coefficients of \ref{eq.by}.

\begin{pron}\label{p:cts_dep}
For any $\eps>0$, there is a $\theta_2>0$ such that if
$$\max\left\{\|\ba-\hat \ba\|, \|D-\hat D\|, \|\Gamma-\hat\Gamma\|\right\}<\theta_2$$
then
$$| r-\hat r|<\eps.$$
\end{pron}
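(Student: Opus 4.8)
The plan is to establish continuity of $ r$ in the coefficients in two stages: first show that the invariant measure $\hat\nu^*$ depends continuously (in the weak-$*$ topology) on the coefficients $(\hat\ba,\hat D,\hat\Gamma)$, and then observe that $ r$ is an integral of a fixed continuous, bounded function against $\hat\nu^*$ (with an extra term accounting for the change of the integrand $\hat\ba^\top\by - \frac12\by^\top\hat\Sigma\by$). The main technical content sits in the first stage. Write $ r = \int_\Delta g_{\ba,\Sigma}(\by)\,\nu^*(d\by)$ and $\hat r = \int_\Delta g_{\hat\ba,\hat\Sigma}(\by)\,\hat\nu^*(d\by)$ where $g_{\ba,\Sigma}(\by) = \ba^\top\by - \frac12\by^\top\Sigma\by$, and split
\[
| r - \hat r| \le \left|\int_\Delta g_{\ba,\Sigma}\,d(\nu^*-\hat\nu^*)\right| + \int_\Delta |g_{\ba,\Sigma} - g_{\hat\ba,\hat\Sigma}|\,d\hat\nu^*.
\]
The second term is bounded by $\sup_{\by\in\Delta}|g_{\ba,\Sigma}(\by) - g_{\hat\ba,\hat\Sigma}(\by)| \le \|\ba - \hat\ba\| + \frac12\|\Sigma - \hat\Sigma\| \le \|\ba - \hat\ba\| + C\|\Gamma - \hat\Gamma\|$ (using $\hat\Sigma = \hat\Gamma^\top\hat\Gamma$ and that the relevant matrices are bounded for $\theta_2$ small), hence is $\le \eps/2$ once $\theta_2$ is small. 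So everything reduces to controlling $\left|\int_\Delta g_{\ba,\Sigma}\,d(\nu^*-\hat\nu^*)\right|$, i.e. to the weak-$*$ convergence $\hat\nu^*\Rightarrow\nu^*$ as $(\hat\ba,\hat D,\hat\Gamma)\to(\ba,D,\Gamma)$.

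For the weak-$*$ convergence I would argue by a compactness-plus-uniqueness scheme. First, tightness is automatic since all the measures live on the compact simplex $\Delta$. Next, take any sequence of coefficients converging to $(\ba,D,\Gamma)$ with corresponding invariant measures $\hat\nu^*_k$; by Prokhorov extract a weakly convergent subsequence $\hat\nu^*_k \Rightarrow \mu$. I claim $\mu$ is an invariant measure for the limiting diffusion \eqref{eq.by}. This follows from a standard argument: for $\phi\in C^\infty(\Delta)$ one has $\int_\Delta \hat{\mathcal L}_k\phi\,d\hat\nu^*_k = 0$, where $\hat{\mathcal L}_k$ is the generator of \eqref{eq.bhy} with the $k$-th coefficients; since the coefficients of $\hat{\mathcal L}_k$ converge uniformly on $\Delta$ to those of the generator $\mathcal L$ of \eqref{eq.by}, we get $\hat{\mathcal L}_k\phi \to \mathcal L\phi$ uniformly, and combined with $\hat\nu^*_k\Rightarrow\mu$ this yields $\int_\Delta \mathcal L\phi\,d\mu = 0$ for all such $\phi$, i.e. $\mu$ is invariant for \eqref{eq.by}. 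By \cite[Proposition 3.1]{ERSS13} the diffusion \eqref{eq.by} has a \emph{unique} invariant probability measure $\nu^*$, so $\mu = \nu^*$. Since every subsequence has a further subsequence converging to $\nu^*$, the whole family converges: $\hat\nu^* \Rightarrow \nu^*$ as $\theta_2\to 0$. Applying this to the bounded continuous test function $g_{\ba,\Sigma}$ gives that the first term is $<\eps/2$ for $\theta_2$ small enough, completing the proof.

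The main obstacle I anticipate is making the generator-convergence step fully rigorous, in particular ensuring that $\hat{\mathcal L}_k$ is a legitimate generator identity against the invariant measure $\hat\nu^*_k$ — this requires knowing that $C^\infty(\Delta)$ (or a suitable core of smooth functions respecting the boundary behavior of $\tilde\BY$) lies in the domain of each generator and that the stationarity identity $\int \hat{\mathcal L}_k\phi\,d\hat\nu^*_k = 0$ holds. Since $0$ and the faces of $\Delta$ are entrance boundaries for these diffusions (as noted after \eqref{e:FP} and in \cite[Proposition 3.1]{ERSS13}), no boundary terms arise and smooth functions suffice; one should cite the relevant part of \cite{ERSS13} or verify directly via Dynkin's formula applied up to exit times from compact subsets of $\Delta^\circ$ together with the entrance-boundary property. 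A secondary, purely bookkeeping point is checking the uniform (in $\by\in\Delta$, and locally uniform in the coefficients) convergence of the drift and diffusion coefficients $\mu(\by)$ and $v(\by)$ from \eqref{e:FP}, which is immediate from their explicit polynomial form in $\by$, $\ba$, $D$, and $\Sigma = \Gamma^\top\Gamma$.
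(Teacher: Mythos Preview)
Your argument is correct and takes a genuinely different route from the paper's own proof. You establish continuity of $\hat\nu^*$ in the coefficients by a soft compactness--uniqueness scheme (tightness on the compact simplex, Prokhorov, identification of any subsequential limit as an invariant measure of the limiting diffusion via generator convergence, uniqueness from \cite[Proposition~3.1]{ERSS13}), and then integrate the fixed test function $g_{\ba,\Sigma}$. The paper instead argues directly and quantitatively: it couples the perturbed and unperturbed processes $\hat\BY$ and $\tilde\BY$ started from a common random initial point $\by^*\sim\hat\nu^*$, uses the \emph{uniform exponential ergodicity} bound \eqref{e2.23} to pick a finite time $T$ at which $\tilde\BY^{\by^*}(T)$ is already $\eps/3$-close (in total variation) to $\nu^*$, and then uses standard continuous dependence of SDE solutions on parameters to make $\hat\BY^{\by^*}(T)$ and $\tilde\BY^{\by^*}(T)$ close in probability. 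The stationarity of $\hat\nu^*$ then gives $\int g_{\ba,\Sigma}\,d\hat\nu^* = \E\,g_{\ba,\Sigma}(\hat\BY^{\by^*}(T))$, and a triangle inequality closes the loop.

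What each approach buys: the paper's coupling argument avoids any appeal to Echeverria-type theorems or generator-domain issues entirely, and is in principle more quantitative (the exponential rate $\gamma_2$ governs how $\theta_2$ depends on $\eps$). Your approach is more conceptual and would still go through in situations where one has only qualitative ergodicity rather than a uniform exponential rate; the technical price is exactly the point you flag, namely justifying $\int\hat{\mathcal L}_k\phi\,d\hat\nu^*_k=0$ for a rich enough class of test functions. On the compact simplex with bounded smooth coefficients and $\hat\nu^*_k$ supported in $\Delta^\circ$, this follows from It\^o's formula (the stochastic integral is a genuine martingale), or alternatively you can bypass the generator entirely by working at the semigroup level: $\hat P^k_t f\to P_t f$ uniformly on $\Delta$ for each fixed $t$ and $f\in C(\Delta)$, which together with $\hat\nu^*_k\Rightarrow\mu$ gives $\int P_t f\,d\mu=\int f\,d\mu$ directly. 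One small omission worth making explicit: you should note that for $\theta_2$ small, $\hat\Sigma=\hat\Gamma^\top\hat\Gamma$ remains positive definite and $\hat D$ remains irreducible, so that \cite[Proposition~3.1]{ERSS13} indeed guarantees existence and uniqueness of $\hat\nu^*$.
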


\begin{proof}
First, let $\theta_1>0$ such that if $\max\left\{\|\ba-\hat \ba\|, \|D-\hat D\|, \|\Gamma-\hat\Gamma\|\right\}<\theta_1$, then
\begin{equation}\label{cd.e1}
\left|\left(\hat\ba^\top{\bf y}-\frac12{\bf y}^\top\hat\Sigma{\bf y}\right)-\left(\ba^\top{\bf y}-\frac12{\bf y}^\top\Sigma{\bf y}\right)\right|<\dfrac{\eps}3\text{ for all }\by\in\Delta.
\end{equation}
Let $\gamma_1$, $\gamma_2, M_3, M_4$ be defined as in the proof of Lemma \ref{lem2.5}.
Pick $T=T(\eps)>0$ such that
\begin{equation}\label{cd.e0}
\|\tilde P(T, \by,\cdot)-\nu^*\|_{TV}\leq \gamma_1\exp(-\gamma_2T)<\frac{\eps}{3M_4}\,\text{ for all } \by\in\Delta.
\end{equation}
By standard arguments, there is a $\theta_2\in(0,\theta_1)$ such that
if  $\max\left\{\|\ba-\hat \ba\|, \|D-\hat D\|, \|\Gamma-\hat\Gamma\|\right\}<\delta_2$,
then
\begin{equation}\label{cd.e2}
\PP\left\{\|\tilde \BY^\by(T)-\hat \BY^\by(T)\|<\dfrac{\eps}{6M_3}\right\}>\dfrac{\eps}{6M_4}\text{ for all }\by\in\Delta
\end{equation}
Let $\by^*$ be a $\Delta$-valued and $\F_0$-measurable random variable whose distribution is $\hat\nu^*$.
Clearly,
\begin{equation}\label{cd.e3}
\int_{\Delta}\left(\ba^\top{\bf y}-\frac12{\bf y}^\top\Sigma{\bf y}\right)\hat\nu^*(d{\bf y})=\E\left( \ba^\top\hat\BY^{\by^*}(T)-\frac12(\hat\BY^{\by^*}(T))^\top\Sigma\hat\BY^{\by^*}(T)\right).
\end{equation}
In view of \eqref{cd.e0}, \begin{equation}\label{cd.e4}
\left|\E\left( \ba^\top\tilde\BY^{\by^*}(T)-\frac12(\tilde\BY^{\by^*}(T))^\top\Sigma\tilde\BY^{\by^*}(T)\right)- r\right|\leq M_4\sup_{\by\in\Delta}\{\|\tilde P(t,\by,\cdot)-\mu^*\|\}\leq\dfrac{\eps}3.
\end{equation}
It follows from \eqref{cd.e2} that
\begin{equation}\label{cd.e5}
\begin{aligned}
\E&\left|\ba^\top\hat\BY^{\by^*}(T)-\frac12(\hat\BY^{\by^*}(T))^\top
\Sigma\hat\BY^{\by^*}(T)-\ba^\top\tilde\BY^{\by^*}(T)+\frac12(\tilde\BY^{\by^*}(T))^\top\Sigma\tilde\BY^{\by^*}(T)\right|\\
\,&\leq M_3\dfrac{\eps}{6M_3}\PP\left\{\|\tilde \BY^{\by^*}-\hat \BY^{\by^*}\|<\dfrac{\eps}{6M_3}\right\}+M_4\PP\left\{\|\tilde \BY^{\by^*}-\hat \BY^{\by^*}\|\geq\dfrac{\eps}{6M_3}\right\}\leq\dfrac{\eps}3.
\end{aligned}
\end{equation}
In view  of \eqref{hatlambda}, \eqref{cd.e1}, \eqref{cd.e3}, \eqref{cd.e4}, and \eqref{cd.e5},
if
$$\max\left\{\|\ba-\hat \ba\|, \|D-\hat D\|, \|\Gamma-\hat\Gamma\|\right\}<\theta_2$$ then $| r-\hat r|<\eps$, which
completes the proof.
\end{proof}
\begin{rmk}
{\rm The continuous dependence of $r$ on the coefficients
can also be proved by generalizing the arguments from the proof of \cite[Proposition 3]{ERSS13}.
Since \cite[Proposition 3]{ERSS13} focuses only on the continuity for a specific parameter rather than
all parameters, we provided an alternative proof for the sake of completeness.
}
\end{rmk}
\subsection{Robust persistence and extinction}

\begin{proof}[Sketch of proof of Theorem \ref{probust2}]
As usual, we work with
\begin{equation}\label{e.hatbys}
\begin{split}
d\hat\BY(t)=&\left(\diag(\hat\BY(t))-\hat\BY(t)\hat\BY^\top(t)\right)\hat\Gamma^\top(\hat S(t)\hat\BY(t)) d\BB(t)+\hat\BD(\hat S(t)\hat\BY(t))^\top\hat\BY(t)dt\\
&+\left(\diag(\hat\BY(t))-\hat\BY(t)\hat\BY^\top(t)\right)(\hat\ba-\hat\Sigma(\hat S(t)\hat\BY(t)) \hat\BY(t)-\hat\bb(\hat S(t)\hat\BY(t)))dt\\
d\hat S(t)=&\hat S(t)\left[\hat \ba-{\hat\bb(\hat S(t)\hat\BY(t))}\right]^\top\hat\BY(t)dt+\hat S(t){\hat\BY(t)}^\top \hat\Gamma^\top(\hat S(t)\hat\BY(t)) d\BB(t),
\end{split}
\end{equation}
where $\hat S(t):=\sum_{i}\hat X_i(t)$, $\hat\BY(t):=\frac{\hat\BX(t)}{\hat S(t)}$.
In order to have a complete proof for this proposition one can follow the steps from Section \ref{sec:+}.
First, since $\Sigma$ is positive definite
then so is $\hat\Sigma(\bx):=\hat\Gamma(\bx)^\top\hat\Gamma(\bx)$ if
$\sup_{\bx\in\R^{n,\circ}_+}\|\hat\Gamma(\bx)-\Gamma\|$ is sufficiently small.  As a result,
$(\hat\BX(t))_{t\geq0}$ is a nondegenerate diffusion in $\R^{n,\circ}_+$
and
Lemma \ref{petite} holds for $(\hat\BY(nT),\hat S(nT))_{n\in\N}$.
We also have the following results:
 there exist positive constants $\hat K_i: i=1,\dots,4$, which do not depend on $\theta$ as long as $\theta$ is sufficiently small, such that
\begin{equation}\label{eh2.1}
   \E \hat S^{\by, s}(t) \leq e^{-\gamma_b t/2}s
  +\hat K_1, \,\, (\by,s)\in \Delta\times(0,\infty), t\geq0.
\end{equation}
  \begin{equation}\label{eh2.2}
    \E \left([\ln \hat S^{\by,s}(T)]^2\right)\leq ((\ln s)^2+1)\hat K_2\exp\{\hat K_3T\}, \, (\by, s)\in\Delta\times(0,\infty), T\geq0,
  \end{equation}
  and
  \begin{equation}\label{eh2.3}
    \E \left([\ln \hat S^{\by,s}(T\wedge \hat\zeta^{\by, s})]_-^2\right)\leq (\ln s)^2+\hat K_4\sqrt{\PP(A)}(T+1)[\ln s]_-+\hat K_4T^2
  \end{equation}
for all $ (\by, s)\in\Delta\times(0,1), A\in\F$ where
\[
\hat \zeta^{\by, s}:=\inf\{t\geq0:  \hat S^{\by,s}(t)=1\}.
\]
  On the other hand, standard arguments show that
  for any $\eps>0$, $T>0$, there is a $\theta=\theta(\eps, T)>0$ such that
$$\PP\left\{\left\|(\BY^{\by,s}(t), S^{\by, s}(t))-(\hat \BY^{\by, s}(t),\hat S^{\by, s}(t)) \right\|\leq\eps, \, 0\leq t\leq T\right\}>1-\eps$$
 given that
$(\by,s)\in\Delta\times[0,1].$
Combining this fact with Proposition \ref{prop3.2}, one can find $\delta=\delta(\eps,T)>0$ and $\theta=\theta(\eps,T)>0$ such that
$$\PP\left\{\left\|(\tilde\BY^{\by,s}(t), 0)-(\hat \BY^{\by, s}(t),\hat S^{\by, s}(t)) \right\|\leq\eps, \, 0\leq t\leq T\right\}>1-\eps$$
given that
$(\by,s)\in\Delta\times(0,\delta)$
and \eqref{robust} holds.
With this fact, we can use Lemma \ref{lem2.5} with slight modification to show that,
for any $\eps>0$, there is a $T^*=T^*(\eps)$ and
$\delta=\delta(\eps,T^*),\theta=\theta(\eps,T^*)$ such that
  \begin{equation}\label{eh2.4}
    \PP\left\{\ln s+\frac{3 r T^*}{4}\leq \ln \hat S^{\by,s}(T^*)<0\right\}\geq 1-3\eps\text{ for all } (\by, s)\in\Delta\times(0,\delta)
  \end{equation}
  given that
  \eqref{robust} holds.
Having \eqref{eh2.1}, \eqref{eh2.2}, \eqref{eh2.3}, and \eqref{eh2.4}, we can use the
arguments from Proposition \ref{prop2.3} and Theorem \ref{thm2.2} to finish the proof.
\end{proof}

 \begin{rmk}
{\rm If $ r<0$, $\BX(t)$ converges to $\bf0$ with probability 1.
By virtue of Proposition \ref{p:cts_dep},
 if $\hat D,\hat\Gamma$ are constant matrices and
 $\max\left\{\|\ba-\hat \ba\|, \|D-\hat D\|, \|\Gamma-\hat\Gamma\|\right\}$ is sufficiently small
 then
 $\hat\BX(t)$ converges to $\bf0$ with an exponential rate almost surely.
 We conjecture that this result
 holds for any $\theta$-perturbation of $\BX(t)$ defined by \eqref{robust}.
 However, when $\hat D:=\hat D(\bx),\hat\Gamma:=\hat\Gamma(\bx)$,
 comparison arguments may be
 not
 applicable.
 Moreover, it is also difficult to analyze the asymptotic behavior of the equation without competition terms, namely
 \begin{equation}
d\mathcal {\hat X}(t)=\left(\diag(\mathcal {\hat X}(t))\hat\ba+ {\hat D(\mathcal{\hat X}(t))}^\top\mathcal{\hat X}(t)\right)dt+\diag(\mathcal {\hat X}(t)) {\hat\Gamma(\mathcal{\hat X}(t))}^\top d\BB(t).
\end{equation}

 }\end{rmk}

\end{document}